\def\imod#1{\allowbreak\mkern10mu({\operator@font mod}\,\,#1)}
\theoremstyle{plain}
\newtheorem{thm}{Theorem}
\theoremstyle{definition}
\newtheorem{defi}[thm]{Definition}
\newtheorem{cor}[thm]{Corollary}
\newtheorem*{cor*}{Corollary}
\newtheorem*{lem*}{Lemma}
\newtheorem*{thm*}{Theorem}
\newtheorem*{clm*}{Claim}
\newtheorem{lem}[thm]{Lemma}
\newtheorem{question}[thm]{Question}
\newtheorem{prop}[thm]{Proposition}
\newtheorem{rem}[thm]{Remark}
\newtheorem*{acknowledgement*}{Acknowledgement}
\DeclareMathOperator{\dom}{dom}
\DeclareMathOperator{\ran}{ran}
\DeclareMathOperator{\id}{id}
\DeclareMathOperator{\ZF}{ZF}
\DeclareMathOperator{\CH}{CH}
\renewcommand{\id}{\mathrm{id}}
\newcommand{\bbP}{\mathbb{P}}
\newcommand{\bbQ}{\mathbb{Q}}
\newcommand{\bbZ}{\mathbb{Z}}
\renewcommand{\phi}{\varphi}
\newcommand{\om}{\omega}
\newcommand{\frc}{\mathfrak{c}}
\newcommand{\calA}{\mathcal{A}}
\newcommand{\calE}{\ensuremath{\mathcal{E}}}
\newcommand{\calG}{\ensuremath{\mathcal{G}}}
\newcommand{\calI}{\ensuremath{\mathcal{I}}}
\newcommand{\calO}{\ensuremath{\mathcal{O}}}
\newcommand{\calP}{\mathcal{P}}
\newcommand{\forces}{\Vdash}
\newcommand{\restr}{\!\upharpoonright\!}
\newcommand{\Exists}{\,\exists}
\newcommand{\concatB}{\mathbin{\rotatebox[origin=c]{90}{\scalebox{.7}{(\kern1ex)}}}}
\newcommand{\extends}{\mathbin{\leq}}
\DeclareMathOperator{\hull}{hull}
\newcommand{\simpleset}[1]{{\{{#1}\}}}
\newcommand{\simpleseq}[1]{{\langle{#1}\rangle}}
\newcommand{\set}[2]{{\{ {#1} \mid {#2} \}}}
\newcommand{\seq}[2]{{\langle {#1} \mid {#2} \rangle}}
\definecolor{dodger}{rgb}{0.0,0.5,1.0}
    \DeclareMathOperator{\fix}{fix}
    \DeclareMathOperator{\cofin}{cofin}
\date{}
\begin{document}

%\title{Cofinitary Groups and Projective Well-Orders}

\title{Tight cofinitary groups}
%{Strong Independence}

%    Information for first author

\author{Vera Fischer}
\address{University of Vienna, Institute for Mathematics, Kolingasse 14-16, 1090 Vienna, Austria}
\email{vera.fischer@univie.ac.at}

%    Information for second author

\author{L. Schembecker}
\address{Department of Mathematics, University of Hamburg, Bundesstrasse 55, 20146 Hamburg, Germany}
\email{lukas.schembecker@uni-hamburg.de}

%    Information for third author

\author{David Schrittesser}
\address{Institute for Advanced Studies in Mathematics, Harbin Institute of Technology, 92 West Dazhi Street, Nangang District, Harbin, Heilongjiang Province, China
150001}
\email{david.schrittesser@univie.ac.at}

\thanks{The first and second authors would like to thank the Austrian Science Fund FWF for the generous support through grant number START Y1012-N35 [10.55776/Y1012].}

%    General info
\makeatletter
\@namedef{subjclassname@2020}{%
  \textup{2020} Mathematics Subject Classification}
\makeatother
\subjclass[2020]{03E35, 03E17, 03E55}

%\date{}

\keywords{cofinitary groups; forcing indestructibility; projective well-orders}

%\maketitle

\begin{abstract} 
We introduce the notion of a tight cofinitary group, which captures forcing indestructibility of maximal cofinitary groups for a long list of partial orders, including Cohen, Sacks, Miller, Miller partition forcing and Shelah's poset for diagonalizing maximal ideals. Introducing a new robust coding technique, we  establish the relative consistency of $\mathfrak{a}_g=\mathfrak{d}<\mathfrak{c}=\aleph_2$ alongside the existence of a $\Delta^1_3$-well-order of the reals and a co-analytic witness for $\mathfrak{a}_g$.
\end{abstract}

\maketitle

\section{Introduction} \label{SEC_Introduction}

{\emph{Cofinitary groups}} are subgroups of $S_\infty$, the group of all permutations of the natural numbers, which have the property that all their non-identity elements have only finitely many fixed points. Throughout the paper, we denote by $\text{cofin}(S_\infty)$ the collection of all such permutations. That is for $f\in S_\infty$, $f\in \text{cofin}(S_\infty)$ iff the set 
$\fix(f):=\{n\in\omega: f(n)=n\}$ is finite. {\emph{Maximal cofinitary groups}} are cofiniaty groups which are maximal under inclusion. In the 80's they were of interest to algebraists, like Adeleke,  Cameron and Neumann (see \cite{SA}, \cite{PC} and \cite{PN}), who could show that a countable cofinitary group is never maximal, even more a bounded cofinitary group is never maximal, and that there always is a cofinitary group of cardinality $\mathfrak{c}$. In \cite{YZhang} Y. Zhang developed a ccc forcing notion  leading to the consistency of $ \aleph_1<\mathfrak{a}_g<\mathfrak{c}$, where $\mathfrak{a}_g$ denotes the least cardinality of a maximal cofinitary group.  An interesting poset adjoining a maximal cofinitary group of arbitrary uncountable size, which also plays a crucial role in the consistency of $\mathfrak{a}_g=\aleph_\omega$ can be found in \cite{VFAT}. An excellent exposition of the general properties of most of the classical combinatorial cardinal characteristics can be found in \cite{Blass}.

The study of the projective complexity of maximal cofinitary groups has already a comparatively long history.  In 2008~\cite{SGYZ} Gao and Zhang show that in the constructible universe $L$ there is a maximal cofinitary group with a co-analytic set of generators. The result was improved a year later by Kastermans~\cite{BAK1} who constructed a co-analytic maximal cofinitary group (in $L$).  In both of those results the existence of a $\Sigma^1_2$ definable projective well-order of the reals,  as well as the Continuum Hypothesis, play a crucial role, thus leaving aside the problem of providing projective maximal cofinitary groups of low projective complexity in models of large continuum. 
 
The study of models of $\neg\CH$ with a projective well-order on the reals has a much longer history, initiated by the work of Leo Harrington~\cite{LeoH}, in which he obtained a model of $\mathfrak{c}=\aleph_2$ with a $\mathbf{\Delta^1_3}$-well-order of the reals. In~\cite{VFSF}, the first author of the paper and S. D. Friedman, introduced the method of coding with perfect trees to obtain models of $\mathfrak{c}=\aleph_2$, a light-face $\Delta^1_3$-well-order of the reals and various cardinal characteristics constellations. Note that by a result of Mansfield if there is a $\Sigma^1_2$-well-order of the reals, then every real is constructible and so the existence of a $\Delta^1_3$-well-order on the reals is optimal for models of $\mathfrak{c}>\aleph_1$.  The well-order can be used to produce $\Sigma^1_3$-definable sets of reals of interest of cardinality $\mathfrak{c}$, including maximal cofinitary groups. But the complexity of the maximal cofinitary group thus obtained is not lowest possible: Indeed, Horowitz and Shelah in~\cite{HS1} show that there is always a Borel maximal cofinitary group (which is necessarily of cardinality $\mathfrak{c}$).  In~\cite{VFDSAT}, the first and third author of the current paper jointly with A. Törnquist construct a co-analytic maximal cofinitary group, which is Cohen indestructible. Thus, they obtain a model in which $\mathfrak{a}_g=\mathfrak{b}<\mathfrak{d}=\mathfrak{c}=\kappa$ where $\kappa$ can be an arbitrarily large cardinal of uncountable cofinality and $\mathfrak{a}_g$ is witnessed by a maximal cofinitary group of optimal projective complexity. Providing a model of $\mathfrak{d}=\mathfrak{a}_g=\aleph_1<\mathfrak{c}$ with an optimal projective witness to $\mathfrak{a}_g$ remained open until the current paper and required significantly different ideas. 
  
Recall 
that a family of reals $\calE\subseteq{^\omega\omega}$ is said to be {\emph{eventually different}} if for any distinct elements of $f,g$ from $\calE$ there is $n\in\omega$ such that $\forall k\geq n (g(k)\neq f(k))$. An eventually different family is maximal, abbreviated m.e.d., if it is maximal under inclusion and the minimal cardinality of a maximal eventually different family is denoted $\mathfrak{a}_e$. Every cofinitary group is an eventually different family
and the graphs of the elements of every eventually different family form an almost disjoint family (in $\omega\times\omega$).\footnote{For work regarding ideals other than the ideal of finite sets, see \cite{HE}.} 

Inspired by the construction of a tight almost disjoint family, in~\cite{VFCS}, Fischer and Switzer introduce the notion of a tight eventually different family of reals. Tight eventually different families are maximal in a strong sense, as they are indestructible by a long list of partial orders, including Cohen, Sacks (its products and countable support iterations), Miller, Miller partition forcing and others. They are never analytic, exist under CH and $\mathsf{MA}(\sigma\text{-centered})$, and are used in~\cite{VFCS} to provide co-analytic witnesses to $\mathfrak{a}_e=\aleph_1$ in various forcing extensions. Moreover, in~\cite{JBVFCS} the authors show that Sacks coding preserves (in a strong sense) co-analytic tight eventually different families leading to models of $\mathfrak{c}=\aleph_2$ with a $\Delta^1_3$-well-order of the reals and co-analytic witnesses of $\mathfrak{a}_e=\aleph_1$. However, the consistency of the existence of a co-analytic maximal cofinitary group of size $\aleph_1$ in the presence of a $\Delta^1_3$-well-order of the reals and $\mathfrak{c}=\aleph_2$ remained open until the current paper.

The paper is structured as follows: In section 2 we introduce the notion of a tight cofinitary group. Tight cofinitary groups are maximal in a strong sense, as they are indestructible by a long list of partial orders.  In section 3 we introduce a slight modification of Zhang's poset for adding a new generator to a cofinitary group and show that under $\mathsf{MA}(\sigma\text{-centered})$ tight cofinitary groups exist and moreover that every cofinitary group of size $<\mathfrak{c}$ is contained in a tight cofinitary group of size $\mathfrak{c}$. In Section 4 we introduce a robust new coding technique allowing us to code reals into the lengths of orbits of every new word.
Crucially, compared to other coding techniques for cofinitary groups (i.e.\ as in \cite{VFDS}) our new coding is parameter-free and hence may be applied to groups of uncountable size.
Furthermore, as we code into orbits rather than actual function values, a more general generic hitting lemma (see Lemma~\ref{LEM_Tree_Extension}) required for tightness holds.
As an application of this new coding, we obtain our first main result, namely the existence of a co-analytic tight cofinitary group in $L$ (see Theorem \ref{THM_Coanalytic_Group}).  In Section 5,  we obtain our second main result, proving that Sacks coding strongly preserves the tightness of nicely definable cofinitary groups (see Theorem \ref{main_two}) and so establishing the consistency of $\mathfrak{a}_g=\mathfrak{d}=\aleph_1<\mathfrak{c}=\aleph_2$ alongside a tight co-analytic witness to $\mathfrak{a}_g$ and a $\Delta^1_3$-well order of the reals.

Our results, together with earlier investigation into the existence of nicely definable, forcing indestructible combinatorial sets of reals of interest, imply that each of the constellations 
$\mathfrak{a}_g=\mathfrak{u}=\mathfrak{i}=\aleph_1<\mathfrak{c}=\aleph_2$, $\mathfrak{a}_g=\mathfrak{u}=\aleph_1<\mathfrak{i}=\mathfrak{c}=\aleph_2$, $\mathfrak{a}_g=\mathfrak{i}=\aleph_1<\mathfrak{u}=\mathfrak{c}=\aleph_2$, $\mathfrak{a}_g=\aleph_1<\mathfrak{i}=\mathfrak{u}=\aleph_2$ can hold alongside a $\Delta^1_3$-well-order of the reals, a co-analytic tight cofinitary group of cardinality $\aleph_1$, as well as a co-analytic filter base of cardinality $\aleph_1$ in the first and second constellations, and a co-analytic selective independent family in the first and third constellations. We conclude the paper with listing some interesting remaining open questions.

In particular, we give a new proof of the existence of a Miller indestructible maximal cofinitary group (originally obtained in~\cite{BKYZ} using a diamond sequence) and answer Question 2 of~\cite{VFCS}.

\section{Tight cofinitary groups} \label{SEC_Tight}

In \cite{VFCS} Switzer and the first author defined a notion of tightness for eventually different families of permutations.
As we will see, the same notion of tightness may be used in the context of cofinitary groups. In this section we will briefly recall the definitions of \cite{VFCS}, which are important for the current work.

A tree $T \subseteq \omega^{<\omega}$ is called injective iff every element of $T$ is injective.
Let $\calP$ be a maximal eventually different family of permutations.
That means that for every permutation $f \in S_\infty$ there is a $g \in \calP$ such that $f$ and $g$ are infinitely often equal.
Now, consider the tree $T$ with only branch $f$.
Clearly this is an injective tree as $f$ is injective, and we may rephrase the relationship between $f$ and $g$ in terms of $T$ and $g$ by: for every $s \in T$, there is a branch through $T$ which is infinitely often equal to $g$.
Hence, we define

\begin{defi} \label{DEF_Densely_Diagonalizing}
    Let $T \subseteq \omega^{<\omega}$ be an injective tree and $g \in \omega^\omega$.
    Then, we say $g$ {\emph{densely diagonalizes}} $T$ iff for all $s \in T$ there is a $t \in T$ with $s \subseteq t$ and $k \in \dom(t) \setminus \dom(s)$ such that $t(k) = g(k)$.
\end{defi}

Generally, the idea of tightness is to strengthen the maximality of $\calP$ by requiring that for all injective trees $T$ there is a $g \in \calP$ densely diagonalizing $T$ - in fact for countably many injective trees there should be just one $g \in \calP$ which densely diagonalizes all of them (see Definition~\ref{DEF_Tight_Permutations}).
However, this cannot quite work, as for example for the injective tree $T$ with only branches $f_1 \neq f_2 \in \calP$, there cannot be a single $g \in \calP$ diagonalizing $T$ (Since $\calP$ is eventually different, $g$ would have to be both equal to $f_1$ and $f_2$).

Thus, for a family $\calP \subseteq \omega^{\omega}$ of eventually different permutations we require diagonalization only for the following set of injective trees positive with respect to $\calP$:

\begin{defi} \label{DEF_Injective_Tree_Ideal}
    The {\emph{injective tree ideal}} generated by $\calP$, denoted $\calI_{i}(\calP)$, is the set of all injective trees $T \subseteq \omega^{<\omega}$ such that there is an $s \in T$ and a finite set $\calP_0 \subseteq \calP$, so that for all $t \in T$ with $s \subseteq t$ and $k \in \dom(t) \setminus \dom(s)$ there is an $f \in \calP_0$ with $t(k) = f(k)$.
    Dually, we denote with $\calI_i(\calP)^+$ all injective trees $T \subseteq \omega^{<\omega}$ not in $\calI_i(\calP)$.
\end{defi}

In other words, a tree $T$ is an element of $\calI_i(\calP)^+$ iff for all $s \in T$, the restriction $T_s$ of $T$ to $s$ cannot be covered by the graphs of finitely many members of $\calP$.
Note, that despite its name $\calI_{i}(\calP)$ is not an ideal nor does it generate one.
The purpose of this naming is its analogical role as the associated ideal for tight mad families (see \cite{GHT}).
There, a subset of $\om$ is positive with respect to a mad family $\calA$ iff it cannot be almost covered by the union of finitely many members of $\calA$.

Remember, that tightness should imply some kind of forcing indestructibility, so we also have the following intuition as to why we consider injective trees.
Given a forcing $\bbP$, $p \in \bbP$ and a $\bbP$-name $\dot{f}$ for a permutation which is eventually different different from $\calP$, we may consider the hull of $\dot{f}$ below $p$, which is the set of all possible finite approximations of $\dot{f}$ below $p$.
\[
    \text{hull}(\dot{f}, p) = \set{s \in \omega^{<\omega}}{\exists q \extends p \text{ such that } q \forces s \subseteq \dot{f}}
\]
It is easy to see, that by assumption on $\dot{f}$ we have $\hull(\dot{f},p) \in \calI_i(\calP)^+$.
So in this context, we essentially require that for every hull of a name for a real destroying the maximality of $\calP$, there is a $g \in \calP$ diagonalizing this hull.
It turns out, that this property already implies forcing indestructibility by many forcings $\bbP$, which is in some sense the main result of \cite{VFCS}.

\begin{defi} \label{DEF_Tight_Permutations}
    An eventually different family of permutations $\calP$ is called {\emph{tight}} iff for all sequences $\seq{T_n \in \calI_i(\calP)^+}{n < \omega}$ there is a $g \in \calP$ such that $g$ diagonalizes $T_n$ for every $n < \omega$.
\end{defi}

In our setting, since every cofinitary group is a family of eventually different permutations we may define tightness in terms of eventually different permutations.

\begin{defi} \label{DEF_Tight_Cofinitary}
    Let $\calG$ be a cofinitary group.
    Then we say $\calG$ is {\emph{tight}} iff $\calG$ is tight as an eventually different family of permutations.
\end{defi}

\begin{rem} \label{DEF_Tight_Implies_Maximal}
    Since tight eventually different families of permutations are maximal eventually different (m.e.d.) families of permutations (cf.\ Proposition~5.3 in \cite{VFCS}), also tight cofinitary groups are maximal cofinitary groups as we have the following implication:
    $$
        \calG \text{ group and m.e.d.\ family of permutations } \Rightarrow \calG \text{ maximal cofinitary group}
    $$
    To the knowledge of the authors the reverse implication is not known even if the maximality on the left side is restricted to elements in $\cofin(S_\infty)$.
\end{rem}

\section{Existence of tight cofinitary groups under Martin's Axiom} \label{SEC_Zhang}

Similarly to Theorem~5.4 in \cite{VFCS} for eventually different families of permutations, we verify that ${\sf MA}(\sigma\text{-centered})$ implies the existence of tight cofinitary groups. The conditions of the poset give finite approximations to the new generator, while the extension relation guarantees not only that the new generator is indeed a cofinitary permutation, but also that every new permutation in the group resulting from adjoining this new generator to the given cofinitary group, has only finitely many fixed points. Our notation regarding words, partial evaluations and fixed point of such evaluations follow \cite{VFAT}, \cite{VFLS}, \cite{VFDSAT} and \cite{YZhang}. 
Following \cite{VFLS} we will use a version of Zhang's forcing (see \cite[Definition 2.1]{YZhang}) which uses nice words.

\begin{defi}\label{DEF_Words_And_Evaluation}
    Let $\calG$ be a cofinitary group.
    \begin{enumerate}
    \item We denote with {\emph{$W_\calG$}} the set of all words in the language $\calG \cup \simpleset{x, x^{-1}}$, where $x$ is treated as a new symbol.
    \item 
    Given any (partial) injection $s$ and $w \in W_\calG$ we denote with {\emph{$w[s]$}} the (partial) injection, where in $w$, $x$ and $x^{-1}$ are replaced by $s$ and $s^{-1}$ and then the corresponding (partial) functions are concatenated.
    \item 
    Finally, for any $w \in W_\calG$ we denote with {\emph{ $w \restr \calG$}} the finite set of all $g \in \calG$ such that $g$ or $g^{-1}$ appears as a letter in $w$ or $g = \id$.
    Similarly, for $E \subseteq W_\calG$ we set {\emph{$E \restr \calG := \bigcup_{w \in E} w \restr \calG$}}.
    \end{enumerate}
\end{defi}

\begin{defi}\label{DEF_Nice_Words}
    Let $\calG$ be a cofinitary group.
    We denote with $W^*_\calG$ the following subset of $W_\calG$, which we call {\emph{the set of nice words}}.
    A {\emph{word $w \in W_\calG$ is nice}} iff $w = x^k$ for some $k > 0$ or there are $k_0 > 0$ and $k_1, \dots, k_l \in \bbZ \setminus \simpleset{0}$ and $g_0, \dots ,g_l \in \calG \setminus \simpleset{\id}$ such that
    $$
        w = g_l x^{k_l} g_{l-1} x^{k_{l-1}} \dots g_1 x^{k_1} g_0 x^{k_0}.
    $$
    Further, we write $W^1_\calG$ for the set of all $w \in W^*_\calG$ with exactly one occurrence of $x$ or $x^{-1}$ and write $W^{>1}_\calG := W^*_\calG \setminus W^1_\calG$.
\end{defi}

\begin{rem}
    The definition of nice words given above is slightly stronger than in \cite{VFLS} as we require the last block of $x$ to be positive.
    However, as $w$ and $w^{-1}$ have the same fix-points for every word $w$, without loss of generality we may work with this slightly more restrictive notion of nice words. 
\end{rem}

\begin{defi}\label{DEF_Zhang_Forcing}
    Let $\calG$ be a cofinitary group.
    Then we define {\emph{Zhang's forcing}} $\bbZ_\calG$ to be the set of all pairs $(s,E)$ such that $s$ is a finite partial function $s:\omega \overset{\text{partial}}{\longrightarrow} \omega$ and $E \in [W^*_\calG]^{<\omega}$.
    We order $(t, F) \extends (s, E)$ by
    \begin{enumerate}
        \item $s \subseteq t$ and $E \subseteq F$,
        \item for all $w \in E$ we have $\fix(w[t]) = \fix(w[s])$.
    \end{enumerate}
\end{defi}

\begin{rem}\label{REM_Zhang_Forcing}
    For any cofinitary group $\calG$ the forcing $\bbZ_\calG$ is $\sigma$-centered and if $G$ is $\bbZ_\calG$-generic in $V[G]$ we have that
    $$
        f_G= f_\text{gen} := \bigcup \set{s}{\Exists E \subseteq W^*_\calG \ (s,E) \in G } \in S_\infty.
    $$
    Further, $\calG \cup \simpleset{f_\text{gen}}$ generates a cofinitary group (e.g.\ see \cite{YZhang}).
\end{rem}

To this end, Zhang's forcing satisfies the following well-known domain and range extension lemma, which forces the generic function to be a permutation (e.g.\ see \cite{YZhang} or \cite{VFLS}).

\begin{lem}\label{LEM_Dom_Ran_Extension}
    Let $\calG$ be a cofinitary group and $(s,E) \in \bbZ_\calG$.
    Then we have
    \begin{enumerate}
        \item if $n \notin \dom(s)$, then for almost all $m < \omega$ we have $(s \cup \simpleset{(n,m)}, E) \extends (s,E)$,
        \item if $m \notin \ran(s)$, then for almost all $n < \omega$ we have $(s \cup \simpleset{(n,m)}, E) \extends (s,E)$.
    \end{enumerate}
\end{lem}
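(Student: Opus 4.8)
The plan is to reduce everything to a single word and then show that adjoining the pair $(n,m)$ to $s$ creates no new fixed point of $w[t]$, where $t := s \cup \simpleset{(n,m)}$, provided $m$ avoids a finite set determined by $s$ and $w$. Since $E$ is finite and a finite intersection of cofinite sets is cofinite, it suffices to fix a single nice word $w \in E$ and exhibit a cofinite set of good witnesses $m$ for (1), and symmetrically for (2). Next, partial evaluation is monotone: from $s \subseteq t$ we get $w[s] \subseteq w[t]$ as partial injections, so $\fix(w[s]) \subseteq \fix(w[t])$ and $w[t]$ agrees with $w[s]$ on $\dom(w[s])$; hence $\fix(w[t]) = \fix(w[s])$ is equivalent to $w[t]$ having no fixed point $a \notin \dom(w[s])$, a \emph{new} fixed point. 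Since $t$ must remain a partial injection we also discard the finitely many $m \in \dom(s) \cup \ran(s)$.

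Now fix such a $w$ and such an $m$, and suppose for contradiction that $a$ is a new fixed point of $w[t]$. Writing $w$ as a word $\ell_N \cdots \ell_1$ in the alphabet $\calG \cup \simpleset{x, x^{-1}}$ and evaluating at $a$ produces a loop $a = b_0 \to b_1 \to \cdots \to b_N = a$, consecutive points being related by a group element, by $t$, or by $t^{-1}$. Because $a \notin \dom(w[s])$, at least one step uses the new pair, i.e.\ is the move $n \mapsto m$ (an application of $x$) or $m \mapsto n$ (an application of $x^{-1}$); in particular the loop visits both $n$ and $m$. From $m \notin \dom(s) \cup \ran(s)$ one reads off that a step arriving at $m$ is never an ``old'' application of $t$ or $t^{-1}$, so it is either the move $n \mapsto m$ or a group step from $g^{-1}(m)$; and $a \neq m$, since for a nice word the rightmost letter is $x$, so $b_0 = m$ would make the first step the undefined value $t(m)$. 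The heart of the argument is to break the loop into its maximal ``old runs'' separated by the finitely many steps that use the new pair. By niceness, $x$ and $x^{-1}$ are never adjacent in $w$ and two group letters are never adjacent, so between consecutive new steps lies a \emph{nonempty} old run, whose underlying subword $v$ of $w$ either contains an occurrence of $x^{\pm 1}$ --- and then $v[s]$ is a finite partial injection, with $\dom(v[s])$ contained in $h[\dom(s)]$ or $h[\ran(s)]$ for some group element $h$ --- or is a single group letter $g \neq \id$, which is cofinitary and hence has finite fixed-point set. Tracing the loop from a new step across an adjacent old run, along which $b' = v[s](b)$ for its subword $v$ and some $b, b' \in \simpleset{n,m}$, we conclude: if $v[s]$ is finite then $m \in \dom(v[s]) \cup \ran(v[s])$, and if $v = g$ then $g$ sends one of $n, m$ to the other or fixes $m$, so $m \in \fix(g) \cup \simpleset{g(n), g^{-1}(n)}$; in every case $m$ lies in a finite set depending only on $w, s, n$. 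The one configuration not immediately covered this way --- a new step $n \mapsto m$ that is the \emph{last} new step and is followed by an old run equal to the leftmost group letter of $w$ (or by the empty run) --- is dispatched by instead tracing the \emph{first} old run of the loop, using again that the rightmost letter of $w$ is $x$ (so every nonempty prefix of $w$ involves $x^{\pm1}$) and, in the degenerate case where the very first step is already a new step, that then $a = n$. Collecting the finitely many excluded values of $m$ over all $w \in E$ yields (1); part (2) is obtained by the symmetric argument with the roles of domain and range, and of $x$ and $x^{-1}$, interchanged.

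The main obstacle is purely organizational: setting up the alternating old-run / new-step decomposition of the loop and verifying that, invoking niceness at exactly the three places indicated (opposite powers of $x$ never adjacent; group letters never adjacent; $w$ beginning and ending with a block of $x$'s), each combinatorial case forces $m$ into an explicit finite set. No single case is difficult, but there are several, and one must be careful with the degenerate ones --- an adjacent old run being empty, the loop threading $n$ via a group letter that fixes $n$, or the new fixed point $a$ coinciding with $n$.
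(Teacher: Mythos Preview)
The paper does not actually prove this lemma; it is stated as a well-known fact about Zhang's forcing with references to \cite{YZhang} and \cite{VFLS}. Your argument is correct and is essentially the standard one found in those references: once $m \notin \dom(s)\cup\ran(s)\cup\{n\}$, every visit of the evaluation loop to $m$ must be flanked on one side by the new pair and on the other by a group letter (since nice words have neither adjacent group letters nor adjacent $x,x^{-1}$ of opposite sign), and the letter on the far side of that group letter then pins $m$ to a finite set depending only on $w$, $s$, and $n$. Your handling of the one genuine boundary case---the last new step being of type $x$ with only the leftmost group letter (or nothing) after it---by passing to the initial segment of the loop and using that the rightmost letter of a nice word is $x$, so that $a\in\dom(s)\cup\{n\}$, is also correct.
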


Further, to show that Zhang's forcing yields tightness in the sense of Definition~\ref{DEF_Tight_Cofinitary} we need the following additional technical lemmas, which yield a stronger version of the standard generic hitting lemma required for tightness.

\begin{lem}\label{LEM_Many_Extensions}
    Let $\calG$ be a cofinitary group.
    Let $T \in \calI_i(\calG)^+$, $t \in T$, $(s, E_0) \in \bbZ_\calG$ and $N < \omega$ with $E_0 \subseteq W^1_\calG$.
    Then, there is a $t' \in T$ with $t \subseteq t'$ satisfying
    $$
        \left| \set{k \in \dom(t') \setminus \dom(t)}{(s \cup \simpleset{(k, t'(k))}, E_0) \extends (s, E_0)} \right| > N.
    $$
\end{lem}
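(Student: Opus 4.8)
The plan is to build $t'$ by repeatedly extending $t$ inside $T$, adding one new coordinate at a time, and using the positivity of $T$ together with Lemma~\ref{LEM_Dom_Ran_Extension} to guarantee that ``most'' of the new coordinate values are compatible with the condition $(s,E_0)$. The key observation is that, since every $w \in E_0$ lies in $W^1_\calG$, each $w$ has exactly one occurrence of $x^{\pm 1}$, so the constraint $\fix(w[s \cup \{(k,m)\}]) = \fix(w[s])$ translates, via partial evaluation, into: $m$ must avoid a \emph{finite} set of ``bad'' values (those $m$ which would create a new fixed point of $w$) \emph{and} $k$ must avoid a finite set of bad coordinates determined by $w$ and $s$. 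More precisely, for a fixed $k \notin \dom(s)$, Lemma~\ref{LEM_Dom_Ran_Extension}(1) tells us that for all but finitely many $m$, $(s \cup \{(k,m)\}, E_0) \extends (s,E_0)$; call the finitely many exceptional values $B(k) \subseteq \omega$, and note $|B(k)|$ is bounded by a constant $c$ depending only on $(s,E_0)$ (roughly, the number of fixed points any $w[s]$ could gain is bounded in terms of $|s|$ and $|E_0|$).

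First I would fix such a uniform bound $c$ on $|B(k)|$. Then I would extend $t$ within $T$ coordinate-by-coordinate: having built $t'' \supseteq t$ in $T$ with $\dom(t'') \setminus \dom(t)$ of size $n$, I use $T \in \calI_i(\calG)^+$ — more precisely, I use that $T_{t''}$ cannot be covered by finitely many graphs of members of $\calG$, which in particular (taking the empty finite subfamily, or just the definition of an infinite injective tree above a node that is not covered) guarantees $t''$ has a proper extension in $T$, and in fact infinitely many immediate successors, so I may pick a successor $t'' \cat \langle m \rangle \in T$ with $m \notin B(k)$ where $k = \dom(t'')$, provided such $m$ exists. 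Since the successor set of $t''$ in $T$ at coordinate $k$ is infinite (again by positivity of $T$ above $t''$) and $B(k)$ is finite, such $m$ always exists; so every new coordinate I add is automatically a ``good'' coordinate. Iterating $N+1$ times produces $t' \in T$ with $\dom(t') \setminus \dom(t)$ of size $N+1$, all of whose coordinates $k$ satisfy $(s \cup \{(k,t'(k))\}, E_0) \extends (s,E_0)$, which is more than what is required.

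The main obstacle — and the point where the restriction $E_0 \subseteq W^1_\calG$ is essential — is establishing the uniform finiteness/boundedness of the ``bad'' set $B(k)$ and, separately, ruling out the possibility that the \emph{coordinate} $k$ itself (independent of the chosen value $m$) is already bad for some $w \in E_0$. For $w \in W^1_\calG$, the single occurrence of $x^{\pm 1}$ means that in computing $w[s \cup \{(k,m)\}]$ the new pair $(k,m)$ can be used at most once along the evaluation of $w$ at any given point, so a new fixed point of $w$ forces $m$ (or $k$) into a finite set determined by the finitely many group elements appearing in $w$ and by $s$; this is exactly the content underlying Lemma~\ref{LEM_Dom_Ran_Extension}, and I would cite or lightly re-derive it in this uniform form. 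For words with two or more occurrences of $x$ this fails — a single new pair could be traversed twice and the bad set need not be finite — which is why the lemma is stated only for $E_0 \subseteq W^1_\calG$, and handling general $E_0$ is deferred to the subsequent lemmas. A minor technical point to check is that ``$t''$ has infinitely many immediate successors in $T$'' genuinely follows from $T \in \calI_i(\calG)^+$: if $t''$ had only finitely many successor values $m_1,\dots,m_r$ in $T$, then $T_{t''}$ would be coverable by the $r$ graphs $n \mapsto m_j$ restricted appropriately — this is not literally a cover by graphs of \emph{permutations in $\calG$}, so I would instead argue directly from Definition~\ref{DEF_Injective_Tree_Ideal} that positivity above $t''$ at the least forbids $T_{t''}$ from being a single branch, and more carefully that the branching at each level must be infinite, or else phrase the induction so that I only ever need one suitable successor at a time and appeal to the fact that $T_{t''} \in \calI_i(\calG)^+$ rules out finite branching via the finite-cover-by-graphs characterization applied with the finitely many constant functions absorbed into finitely many members of $\calG$ (possible after enlarging, or handled by a direct counting argument).
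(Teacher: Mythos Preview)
There is a genuine gap: the claim that every node of $T$ has infinitely many immediate successors does \emph{not} follow from $T \in \calI_i(\calG)^+$. A single branch is already a counterexample: if $h \in S_\infty$ is eventually different from every element of $\calG$, then $T = \{h \restr n : n < \omega\}$ is positive, yet every node has exactly one successor. Your suggested repairs do not work either: constant functions are not permutations, so they cannot be ``absorbed'' into finitely many members of $\calG$, and positivity with $\calP_0 = \emptyset$ yields only a proper extension, not infinite branching. So the step ``pick a successor $t''{}^\frown\langle m\rangle \in T$ with $m \notin B(k)$'' is unjustified, and your reliance on Lemma~\ref{LEM_Dom_Ran_Extension} alone cannot close the argument.

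The missing idea --- and exactly what the paper does --- is to notice that $B(k)$ is not an opaque finite set but has a very specific shape. Since every $w \in E_0 \subseteq W^1_\calG$ is of the form $w = gx$ with $g \in \calG$, a new fixed point of $w[s \cup \{(k,m)\}]$ at $k$ forces $g(m)=k$, i.e.\ $m = g^{-1}(k)$. Thus, apart from the finitely many constraints $k \in \dom(s)$ and $m \in \ran(s)$, the bad condition is precisely ``$(k,m)$ lies on the graph of some $h \in E_0 \restr \calG$''. This is \emph{exactly} the type of constraint the definition of $\calI_i(\calG)^+$ is designed to avoid: apply positivity above $t$ with the finite family $\calP_0 = E_0 \restr \calG$ to obtain $t' \supseteq t$ and $k \in \dom(t') \setminus \dom(t)$ with $t'(k) \neq h(k)$ for all $h \in \calP_0$. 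One then verifies directly that $(s \cup \{(k,t'(k))\}, E_0) \extends (s, E_0)$, handling the residual $\dom(s)$/$\ran(s)$ constraints by iterating (these exclude only finitely many $k$'s, resp.\ finitely many values, and $t'$ is injective). This proves the case $N=0$; the general case follows by induction, replacing $t$ by the $t'$ just found.
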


\begin{proof}
    It suffices to verify the case $N = 0$.
    The general case then follows inductively.
    Since $T \in \calI_i(\calG)^+$, choose $t' \in T$ with $t \subseteq t'$ and $k \in \dom(t') \setminus \dom(t)$ with $k \notin \dom(s)$, $t'(k) \notin \ran(s)$ and $t'(k) \neq g(k)$ for all $g \in E_0 \restr \calG$.
    But then, for $s' := s \cup \simpleset{(k, t'(k))}$ we claim that $(s', E_0) \extends (s, E_0)$.
    But by choice of $k$ we have that $s'$ is still a partial injection.
    Further, for every $w \in E_0$ we may choose $g \in \calG$ such that $w = gx$.
    By choice of $k$ we have $t'(k) \neq g^{-1}(k)$.
    Thus, we compute
    $$
        w[s'](k) = g(s'(k)) = g(t'(k)) \neq k.
    $$
    Finally, for every $l \in \ran(s) \cup \dom(s)$ we have $w[s](l) = w[s'](l)$, so that $\fix(w[s]) = \fix(w[s'])$.
\end{proof}

\begin{lem} \label{LEM_Tree_Extension}
    Let $\calG$ be a cofinitary group.
    Let $T \in \calI_i(\calG)^+$, $t \in T$ and $(s, E) \in \bbZ_\calG$.
    Then, there is $(s', E) \in \bbZ_\calG$, $t' \in T$ and $k \in \dom(t') \setminus \dom(t)$ such that $(s', E) \extends (s, E)$, $t \subseteq t'$ and $t'(k) = s'(k)$.
\end{lem}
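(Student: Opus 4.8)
The goal is to extend $(s,E)$ to $(s',E)$ while also extending $t$ to $t'\in T$ so that the new generic approximation $s'$ actually agrees with $t'$ at some fresh coordinate $k$. The strategy is to first strip the problem down to the case where $E\subseteq W^1_\calG$ — i.e. where every word has a single occurrence of $x$ — since those are exactly the words handled by Lemma~\ref{LEM_Many_Extensions}, and the words in $W^{>1}_\calG$ must be dealt with by a separate "large free choice" argument. So the first step is a reduction: given arbitrary $E$, note that fixing $\fix(w[s'])=\fix(w[s])$ for $w\in W^{>1}_\calG$ is automatic provided the new values of $s'$ are chosen sufficiently large and sufficiently generic, because each such $w[s']$ evaluated anywhere involving the new point $k$ passes through at least two applications of $s'$ (or $(s')^{-1}$), and by the usual counting argument (as in the domain/range extension Lemma~\ref{LEM_Dom_Ran_Extension}) only finitely many choices of the new value are "bad". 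I would make this precise by splitting $E=E_0\cup E_1$ with $E_0=E\cap W^1_\calG$ and $E_1=E\cap W^{>1}_\calG$, and reducing to showing: for all but finitely many admissible extensions of $s$ by a single pair $(k,t'(k))$, the condition $(s\cup\{(k,t'(k))\},E_1)\extends(s,E_1)$ holds.

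Second step: apply Lemma~\ref{LEM_Many_Extensions} with the word-set $E_0\subseteq W^1_\calG$, the tree $T\in\calI_i(\calG)^+$, the node $t$, the condition $(s,E_0)$, and a bound $N$ chosen large enough to absorb the finitely many "bad" coordinates coming from the $E_1$-analysis (and from $\dom(s)\cup\ran(s)$). This yields $t'\in T$ with $t\subseteq t'$ and a set of more than $N$ coordinates $k\in\dom(t')\setminus\dom(t)$ each satisfying $(s\cup\{(k,t'(k))\},E_0)\extends(s,E_0)$. By the choice of $N$, at least one such $k$ additionally avoids all the bad coordinates for $E_1$; for that $k$ set $s'=s\cup\{(k,t'(k))\}$. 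Then $(s',E_0)\extends(s,E_0)$ by Lemma~\ref{LEM_Many_Extensions}, $(s',E_1)\extends(s,E_1)$ by the reduction, hence $(s',E)\extends(s,E)$, and by construction $k\in\dom(t')\setminus\dom(t)$ with $t'(k)=s'(k)$, as required.

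**The main obstacle.** The delicate point is the $W^{>1}_\calG$ analysis — verifying that adding a single new, large, generic value to $s$ cannot create a new fixed point of any $w[s']$ for $w\in W^{>1}_\calG$. One has to check that for a word $w=g_lx^{k_l}\cdots g_1x^{k_1}g_0x^{k_0}$ with $l\ge 1$, if $w[s'](m)=m$ for some $m$ not already a fixed point of $w[s]$, then the computation of $w[s'](m)$ must use the new pair $(k,t'(k))$ at least once, and so fixing the value $t'(k)$ to avoid finitely many constraints (one set of constraints for each of the finitely many words $w\in E_1$ and each of the finitely many "slots" in $w$ where $k$ could be produced or consumed) suffices. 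This is essentially the same bookkeeping that underlies the standard proof that $\bbZ_\calG$ forces a cofinitary generic — one traces through the at most $\sum_{w\in E_1}(|k_0|+\cdots+|k_l|)$ positions where $s'$ or $(s')^{-1}$ is applied, and observes that requiring the new value to miss a fixed finite set forces any would-be new fixed-point computation to fail — but it must be carried out carefully here because we are committing to a specific value $t'(k)$ dictated by the tree $T$, rather than being free to choose $m$ arbitrarily large as in Lemma~\ref{LEM_Dom_Ran_Extension}. The positivity $T\in\calI_i(\calG)^+$ is what gives us enough room: it guarantees (via Lemma~\ref{LEM_Many_Extensions}) more than $N$ candidate coordinates, so we can afford to discard the finitely many that would be problematic for $E_1$.
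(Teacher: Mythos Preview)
Your overall strategy matches the paper's: split $E = E_0 \cup E_1$ with $E_0 = E \cap W^1_\calG$ and $E_1 = E \cap W^{>1}_\calG$, apply Lemma~\ref{LEM_Many_Extensions} with side condition $E_0$ and some large bound $N$ to obtain $t' \supseteq t$ with more than $N$ many $E_0$-good coordinates, and then argue that one of these is also $E_1$-good. One point you omit: the paper first enlarges $E_0$ so that every subword of each $w \in E_1$ lying in $W^1_\calG$ is already in $E_0$. You should state this WLOG assumption explicitly, since without it a word like $(gx)^2 \in E_1$ produces infinitely many bad pairs $(k, g^{-1}(k))$ and your ``finitely many bad coordinates'' claim fails.

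The one place your argument is genuinely vague is precisely where the paper uses a specific trick. You correctly identify the difficulty --- we are committed to the value $t'(k)$ dictated by $T$, so the standard domain-extension bookkeeping (which chooses the new value freely) does not apply directly --- and you correctly observe that having many candidate coordinates is what gives room. But your proposed resolution, that there is a finite set of ``bad coordinates for $E_1$'' to be avoided, while true under the WLOG assumption, requires a case analysis you do not carry out; and your phrase ``fixing the value $t'(k)$ to avoid finitely many constraints'' suggests you are still thinking of the value as chosen rather than imposed. The paper sidesteps this entirely: since $t'$ is \emph{injective} and there are more than $N$ good-for-$E_0$ coordinates, at least one such $k_0$ satisfies $t'(k_0) \geq N$. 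With $N$ chosen so that $\dom(s) \cup \ran(s)$, all relevant $g$-images thereof, and all fixed points of the $g \in E \restr \calG$ lie below $N$, one then argues directly: for any $d \in \dom(s) \cup \ran(s)$ the computation of $w[s'](d)$ stays below $N$ throughout and hence equals $w[s](d)$; and the computation of $w[s'](k_0)$ becomes undefined after the second occurrence of $x^{\pm 1}$, using the $E_0$-goodness of $k_0$ together with the subword assumption. This use of injectivity to force $t'(k_0)$ large is the clean idea that replaces your bad-coordinate enumeration.
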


\begin{proof}
    Let $E = E_0 \cup E_1$, where $E_0 \subseteq W^1_{\calG}$ and $E_1 \subseteq W^{>1}_{\calG}$.
    We may assume that for all $w \in E_1$ every subword of $w$ in $W^1_{\calG}$ is in $E_0$.
    Choose $N < \omega$ large enough such that
    \begin{enumerate}[(N1)]
        \item $\dom(s) \cup \ran(s) \cup \dom(t) \subseteq N$,
        \item for all $g \in E \restr \calG$ we have $g[\dom(s) \cup \ran(s)] \subseteq N$,
        \item for all $g \in E \restr \calG \setminus \simpleset{\id}$ we have $\fix(g) \subseteq N$.
    \end{enumerate}
    By Lemma~\ref{LEM_Many_Extensions} choose $t' \in T$ with $t \subseteq t'$ satisfying
    $$
        \left| \set{k \in \dom(t') \setminus \dom(t)}{(s \cup \simpleset{(k, t'(k))}, E_0) \extends (s, E_0)} \right| > N.
    $$
    By injectivity of $t'$ choose such a $k_0 < \omega$ with $t'(k_0) \geq N$.
    We define $s' := s \cup \simpleset{(k_0, t'(k_0))}$ and claim that $(s', E)$ is as desired.
    First, since $N \leq k_0, s'(k_0)$ and by (N1) we have that $s'$ is still a partial injection.
    Thus, by choice of $k_0$ it suffices to verify that for every $w \in E_1$ we have that $\fix(w[s]) = \fix(w[s'])$, so let $w \in E_1$.
    As $w$ is nice and has at least two occurrences of $x$ or $x^{-1}$, we may write $w = vxux$ or $w = vx^{-1}ux$ for some $u \in \calG$ and $v \in W_\calG$.
    First, notice that for any $k \in \dom(s) \cup \ran(s)$ we have that
    $$
        w[s](k) = w[s'](k) \qquad (\text{in particular also if both sides are undefined}),
    $$
    as by (N1) and (N2) the computation along $w[s']$ starting with $k$ always stays below $N$.
    Thus, we may finish the proof by showing that $w[s'](k_0)$ is undefined.
    In case that $w = vxux$, we have $ux \in E_0$.
    Thus, by $(s', E_0) \extends (s, E_0)$ we have $(ux)[s'](k_0) \neq k_0$.
    But by (N2) we also have that $(ux)[s'](k_0) \notin \dom(s)$.
    Hence, $w[s'](k_0)$ is undefined.
    
    Otherwise, $w = vx^{-1}ux$, so that $u \neq \id$.
    Then, by (M3) we get $(ux)[s'](k_0) = u(t'(k_0)) \neq t'(k_0)$.
    But by (N2) we also have that $(ux)[s'](k_0) \notin \ran(s)$.
    Hence, $w[s'](k_0)$ is undefined.
\end{proof}

\begin{thm} \label{THM_Tight_Cofinitary_Groups_Exist}
    Assume ${\sf MA}(\sigma\text{-centered})$.
    Then, every cofinitary group of size $<\!\frc$ is contained in a tight cofinitary group of size $\frc$.
\end{thm}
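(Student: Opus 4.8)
The plan is to run a $\sigma$-centered iteration of length $\frc$ with bookkeeping, using (a version of) Zhang's forcing $\bbZ_\calG$ as the step adjoining a new generator and feeding the generic hitting Lemma~\ref{LEM_Tree_Extension} into the construction to secure tightness. Two soft facts first. Directly from Definition~\ref{DEF_Injective_Tree_Ideal}, the map $\calG \mapsto \calI_i(\calG)$ is monotone: $\calG \subseteq \calH$ implies $\calI_i(\calG) \subseteq \calI_i(\calH)$, and hence $\calI_i(\calH)^+ \subseteq \calI_i(\calG)^+$. Also, ${\sf MA}(\sigma\text{-centered})$ implies $\frc$ is regular, while $\frc^{\aleph_0} = \frc$ in $\ZFC$; so there are at most $\frc$ injective trees $T \subseteq \om^{<\om}$ and at most $\frc$ many $\om$-sequences of them, and we may fix an enumeration $\seq{\bar{T}^\al = \seq{T^\al_n}{n < \om}}{\al < \frc}$ of all such sequences.

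Then I would recursively build an increasing, continuous chain $\seq{\calG_\al}{\al < \frc}$ of cofinitary groups, each of size $<\frc$, with $\calG_0$ the given group (at limits taking unions; regularity of $\frc$ keeps sizes below $\frc$). At a successor step $\al+1$ the step forcing is $\bbZ_{\calG_\al}$, which is $\sigma$-centered by Remark~\ref{REM_Zhang_Forcing}; I apply ${\sf MA}(\sigma\text{-centered})$ to get a filter $G_\al$ meeting the following family of fewer than $\frc$ dense sets: (i) for each $n < \om$, the sets $\set{(s,E)}{n \in \dom(s)}$ and $\set{(s,E)}{n \in \ran(s)}$, dense by Lemma~\ref{LEM_Dom_Ran_Extension}; (ii) for each nice word $w \in W^*_{\calG_\al}$ (there are $\max(|\calG_\al|, \aleph_0) < \frc$ of them) the set $\set{(s,E)}{w \in E}$; and (iii), \emph{provided} $T^\al_n \in \calI_i(\calG_\al)^+$ for every $n$, for each $n < \om$ and each $u \in \om^{<\om}$ the set of conditions $(s,E)$ such that either $u \notin T^\al_n$, or there exist $t \in T^\al_n$ and $k \in \dom(t) \setminus \dom(u)$ with $u \subseteq t$, $k \in \dom(s)$ and $s(k) = t(k)$ --- the last family being dense by Lemma~\ref{LEM_Tree_Extension} applied with $T = T^\al_n$ and $t = u$. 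Given $G_\al$, set $g_\al := \bigcup \set{s}{\Exists E\ (s,E) \in G_\al}$ and $\calG_{\al+1} := \langle \calG_\al \cup \simpleset{g_\al} \rangle$, and finally $\calG := \bigcup_{\al < \frc} \calG_\al$.

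The verification then splits into three routine points. First, each $\calG_{\al+1}$ is a cofinitary group properly extending $\calG_\al$: meeting the sets in (i) makes $g_\al$ a bijection of $\om$, and meeting those in (ii) gives for each $w \in W^*_{\calG_\al}$ a condition of $G_\al$ carrying $w$ in its second coordinate, so the directedness argument behind Remark~\ref{REM_Zhang_Forcing} (which uses only (i) and (ii)) makes $\fix(w[g_\al])$ finite; by the standard reduction of arbitrary words to nice ones this forces $\calG_\al \cup \simpleset{g_\al}$ to generate a cofinitary group (cf.\ \cite{YZhang,VFLS,VFAT}), and running $w$ through $x$ and the words $g^{-1}x$ ($g \in \calG_\al \setminus \simpleset{\id}$) shows $g_\al$ is eventually different from every member of $\calG_\al$, so $g_\al \notin \calG_\al$ and the $g_\al$ are pairwise distinct. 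Second, $|\calG| = \frc$ since $\calG$ contains the $\frc$ pairwise distinct permutations $g_\al$ and $\calG \subseteq S_\infty$, and $\calG$ is a cofinitary group being an increasing union of such. Third, $\calG$ is tight: given $\seq{T_n}{n < \om}$ with each $T_n \in \calI_i(\calG)^+$, this is $\bar{T}^\al$ for some $\al$, and by monotonicity $T^\al_n \in \calI_i(\calG)^+ \subseteq \calI_i(\calG_\al)^+$ for all $n$, so clause (iii) was active at stage $\al$; meeting its dense sets forces, for every $n$ and every $u \in T^\al_n$, some $t \in T^\al_n$ with $u \subseteq t$ and some $k \in \dom(t) \setminus \dom(u)$ with $t(k) = s(k) = g_\al(k)$ (using $s \subseteq g_\al$), i.e.\ $g_\al \in \calG$ densely diagonalizes every $T_n$ in the sense of Definition~\ref{DEF_Densely_Diagonalizing}. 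Hence $\calG$ is tight, so a maximal cofinitary group by Remark~\ref{DEF_Tight_Implies_Maximal}, of size $\frc$, and containing $\calG_0$.

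The one point that genuinely needs care --- and the part I would single out as the heart of the matter --- is the bookkeeping: tightness quantifies over sequences of trees positive relative to the \emph{final} group $\calG$, not relative to the intermediate $\calG_\al$, so one cannot enumerate ``the positive trees'' as the construction proceeds. Instead one enumerates \emph{all} $\om$-sequences of injective trees in advance and invokes the monotonicity $\calI_i(\calG)^+ \subseteq \calI_i(\calG_\al)^+$, which guarantees that any $\bar{T}$ that is genuinely positive against $\calG$ gets handled non-trivially at its bookkeeping stage, while the step poset is still $\bbZ_{\calG_\al}$ and Lemma~\ref{LEM_Tree_Extension} is available. Everything else --- $\sigma$-centeredness of $\bbZ_{\calG_\al}$, the density calculations, and the word-reduction argument for cofinitariness --- is supplied by the lemmas above or is entirely standard, paralleling the existence result for eventually different families in Theorem~5.4 of \cite{VFCS}.
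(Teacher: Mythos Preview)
Your proposal is correct and follows essentially the same approach as the paper's proof: an iteration of length $\frc$ using Zhang's forcing $\bbZ_{\calG_\al}$ at each step, with ${\sf MA}(\sigma\text{-centered})$ providing a sufficiently generic filter meeting exactly the four families of dense sets the paper lists (your (i), (ii), (iii) correspond to the paper's (1)+(2), (4), (3)). Your write-up is in fact more explicit than the paper's, which simply refers to Theorem~5.4 of \cite{VFCS} for the iteration framework; in particular you spell out the monotonicity $\calI_i(\calG)^+ \subseteq \calI_i(\calG_\al)^+$ and the resulting bookkeeping trick of enumerating all $\omega$-sequences of injective trees in advance, which the paper leaves implicit.
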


\begin{proof}
    As in Theorem~2.4 in \cite{VFCS} for any cofinitary group $\calG$ of size $<\!\!\frc$ we use ${\sf MA}(\sigma\text{-centered})$ to diagonalize against every $\omega$-sequence of injective trees as in the definition of tightness in an iteration of length $\frc$.
    However, to this end we will instead use Zhang's forcing $\bbZ_\calG$ to obtain a cofinitary group, so we need to verify that the following sets are dense in $\bbZ_\calG$:
    \begin{enumerate}
        \item for every $n < \omega$ the set of all $(s,E) \in \bbZ_\calG$ such that $n \in \dom(s)$,
        \item for every $m < \omega$ the set of all $(s,E) \in \bbZ_\calG$ such that $m \in \ran(s)$,
        \item for every $T \in \calI_i(\calG)^+$, $t \in T$ the set of all $(s, E) \in \bbZ_\calG$ such that there is a $t' \in T$ with $t \subseteq t'$ and a $k \in \dom(t') \setminus \dom(t)$ with $s(k) = t'(k)$,
        \item for every $w \in W^*_\calG$ the set of all $(s,E) \in \bbZ_\calG$ such that $w \in E$.
    \end{enumerate}
    But (1) and (2) are dense by the domain and range extension Lemma~\ref{LEM_Dom_Ran_Extension}, (3) is dense by the previous Lemma~\ref{LEM_Tree_Extension} and the density of (4) is trivial.
    Hence, a generic $f$ hitting these $<\!\!\frc$-many dense sets will extend $\calG$ to a cofinitary group $\langle \calG \cup \simpleset{f} \rangle$ and diagonalize against a given witness of tightness.
\end{proof}

\section{Co-analyticity and Zhang's forcing with coding into orbits} \label{SEC_co-analytic}

In order to obtain a co-analytic tight cofinitary group, we present a new parameter-free coding technique for maximal cofinitary groups.
To this end, we will code a real using the parity of the length of the orbits of elements of our cofinitary group.
First, we present a modification of Zhang's forcing, which codes a real into the lengths of orbits of the Zhang generic real.
This will yield a tight cofinitary group with a co-analytic set of generators.
Secondly, we will expand our coding technique, so that the orbit function of every new word codes some real.
Hence, we obtain that the entire tight cofinitary group is co-analytic.

\subsection{Coding into orbits of the Zhang generic real}

For any partial injection $s:\omega \overset{\text{partial}}{\longrightarrow} \omega$ we may visualize the function values of $s$ using its orbits.
As $s$ is partial its orbits may be closed or open, as visualized below:

\begin{center}
    \begin{tikzpicture}[->,>=stealth',auto,node distance=1.3cm, thick,main node/.style={rectangle,draw}]

    \node[main node] (1) {$4$};
    \node[main node] (2) [right of=1] {$6$};
    \node[main node] (3) [right of=2] {$42$};
    \node[main node] (4) [right of=3] {$1$};
    \node (9) [right of=4]{};
    \node[main node] (5) [right of=9] {$2$};
    \node[main node] (6) [right of=5] {$0$};
    \node[main node] (7) [right of=6] {$10$};
    \node[main node] (8) [right of=7] {$3$};

    \path[every node/.style={font=\sffamily\small}]
        (1) edge node [right] {} (2)
        (2) edge node [right] {} (3)
        (3) edge node [right] {} (4)
        (5) edge node [right] {} (6)
        (6) edge node [right] {} (7)
        (7) edge node [right] {} (8)
        (8) edge[bend right] node [left] {} (5);
    \end{tikzpicture}
    \end{center}

Here, on the left the set $\{1,4,6,42\}$ is an open orbit of $s$, i.e.\ $4 \notin \ran(s)$ and $1 \notin \dom(s)$ - on the right the set $O = \{ 0,2,3,10 \}$ is a closed orbit of $s$, i.e.\ $O \subseteq \dom(s) \cap \ran(s)$.
Note, that for full bijections from $\omega$ to $\omega$ all of their orbits look like $\bbZ$-chains or finite cycles.
We will see that for the generic $\dot{f}_\text{gen}$ added by $\bbZ_\calG$ in fact only finite orbits occur by generic closing of orbits.

\begin{defi}\label{DEF_Full_Orbit_Function}
    Given $f \in \omega^\omega$ and $n < \omega$ let $O_f(n)$ be the orbit of $f$ containing $n$, that is the smallest set containing $n$ closed under applications of $f$ and $f^{-1}$, and define $\calO_f := \set{O_f(n)}{n < \omega}$.
    There is a natural well-order on $\calO_f$ defined for $O, P \in \calO_f$ by $O < P$ iff $\min(O) < \min(P)$.
    Assume $f$ only has finite orbits; it follows that $f$ has infinitely many orbits.
    Then, we may define a function $o_f:\omega \to 2$ by
    $$
        o_f(n) := (\left|O_n\right| \text{ mod } 2),
    $$
    where $O_n$ is the $n$-th element in the well-order of $\calO_f$.
\end{defi}

\begin{defi}\label{DEF_Partial_Orbit_Function}
    For any finite partial injection $s: \omega \overset{\text{finite}}{\longrightarrow} \omega$ and $n < \omega$ define $O_s(n)$ and $\calO_s$ as above.
    We say {\emph{an orbit $O \in \calO_s$ is closed}} iff $O \subseteq \dom(s) \cap \ran(s)$ and denote with $\calO^c_s$ the set of all closed orbits of $s$.
    Conversely, we denote with $\calO^o_s := \calO_s \setminus \calO^c_s$ the set of all {\emph{open orbits}} of $s$.
    We say {\emph{$s$ is nice}} iff for all $O \in \calO^c_s$ we have $\min(O) < \min (\omega \setminus \bigcup \calO^c_s)$.
    For any nice $s$ define a function $o_s:\left|\calO_s\right| \to 2$ by
    $$
        o_s(n) := (\left|O_n\right| \text{ mod } 2),
    $$
    where $O_n$ is the $n$-th element in the well-order of $\calO^c_s$.
    For a real $r \in 2^\omega$ and a nice $s$ we say $s$ codes $r$ iff $o_s \subseteq r$.
\end{defi}

Note, that niceness makes sure that we do not prematurely close any orbit, in the sense that we do not know which function value should be coded, as the well-order of orbits is not decided up to that point yet.

\begin{defi}\label{DEF_Zhang_Coding_Forcing}
    Let $r \in 2^\omega$ be a real and $\calG$ be a cofinitary group.
    Then, we define $\bbZ_\calG(r)$ as the set of all elements $(s, E) \in \bbZ_\calG$ such that $s$ is nice and codes $r$, ordered by the restriction of the order on $\bbZ_\calG$.
\end{defi}

\begin{rem}\label{REM_Generic_Codes_Real}
    We will show that $\dot{f}_{\text{gen}}$ generically codes as much information of $r$ as desired.
    Hence, by definition of $\bbZ_\calG(r)$ in the generic extension we have $o_{\dot{f}_{\text{gen}}} = r$, and thus $r$ can be decoded from the generic Zhang real.
    First, we verify range and domain extension:
\end{rem}

\begin{lem}\label{LEM_Dom_Ran_Extension_Coding}
    Let $\calG$ be a cofinitary group and $(s,E) \in \bbZ_\calG(r)$.
    Then we have
    \begin{enumerate}
        \item if $n \notin \dom(s)$, then for almost all $m < \omega$ we have $(s \cup \simpleset{(n,m)}, E) \extends (s,E)$ as well as $(s \cup \simpleset{(n,m)}, E) \in \bbZ_\calG(r)$,
        \item if $m \notin \ran(s)$, then for almost all $n < \omega$ we have $(s \cup \simpleset{(n,m)}, E) \extends (s,E)$ as well as $(s \cup \simpleset{(n,m)}, E) \in \bbZ_\calG(r)$.
    \end{enumerate}
\end{lem}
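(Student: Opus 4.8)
The plan is to reduce to the non-coding domain and range extension lemma, Lemma~\ref{LEM_Dom_Ran_Extension}, and then to observe that if we moreover force the new vertex to avoid the finite set $\dom(s)\cup\ran(s)$, then the family of closed orbits of the condition is left completely unchanged; the two extra requirements — that $s'$ be nice and that it code $r$ — are then purely formal consequences.

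I will carry out (1); part (2) is entirely symmetric. Fix $(s,E)\in\bbZ_\calG(r)$ and $n\notin\dom(s)$. By Lemma~\ref{LEM_Dom_Ran_Extension}(1) there is a cofinite set $A\subseteq\omega$ such that $(s\cup\{(n,m)\},E)\extends(s,E)$ for all $m\in A$; since each $s\cup\{(n,m)\}$ is in particular a partial injection, $A$ is disjoint from $\ran(s)$. Set $A':=A\setminus(\dom(s)\cup\{n\})$, again cofinite, and fix any $m\in A'$, so that $m\notin\dom(s)\cup\ran(s)$ and $m\neq n$. Writing $s':=s\cup\{(n,m)\}$, it remains only to show $(s',E)\in\bbZ_\calG(r)$, i.e.\ that $s'$ is nice and codes $r$.

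The crucial observation is that $\calO^c_{s'}=\calO^c_s$. Indeed, the only orbit affected by passing from $s$ to $s'$ is the one containing $n$ (equivalently $m$): since $n\notin\dom(s)$ the orbit $O_s(n)$ is open, and since $m\notin\dom(s)\cup\ran(s)$ it becomes $O_s(n)\cup\{m\}$ in $s'$, still with $m\notin\dom(s')$, hence still open; every other orbit of $s$, and in particular every closed one, is untouched. Thus $\calO^c_{s'}=\calO^c_s$, so $\bigcup\calO^c_{s'}=\bigcup\calO^c_s$, and niceness of $s'$ follows at once from niceness of $s$: for each $O\in\calO^c_{s'}=\calO^c_s$ we have $\min(O)<\min(\omega\setminus\bigcup\calO^c_s)=\min(\omega\setminus\bigcup\calO^c_{s'})$. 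Finally, since the closed orbits together with their well-order by minimum are unchanged, $o_{s'}=o_s\subseteq r$, so $s'$ codes $r$ as well. Hence $(s',E)\in\bbZ_\calG(r)$, and since $(s',E)\extends(s,E)$ already holds in $\bbZ_\calG$ it holds in $\bbZ_\calG(r)$. For (2) one argues identically after fixing $m\notin\ran(s)$, intersecting the cofinite set of admissible $n$ from Lemma~\ref{LEM_Dom_Ran_Extension}(2) with $\omega\setminus(\dom(s)\cup\ran(s)\cup\{m\})$, and noting that for such $n$ the orbit $\{n\}\cup O_s(m)$ of $n$ in $s'$ stays open because $n\notin\ran(s')$.

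There is no genuine obstacle here; the single point deserving care is exactly the verification that $\calO^c_{s'}=\calO^c_s$ — namely that excluding the new vertex from $\dom(s)\cup\ran(s)$ rules out both the closing of the enlarged $n$-orbit and a merge of two open orbits (which could a priori produce a new closed orbit, breaking niceness or the coding condition). Everything else, including the fact that only finitely many additional values of $m$ (resp.\ $n$) were discarded so that ``almost all'' survives, is routine.
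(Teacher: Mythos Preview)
Your proof is correct and follows essentially the same approach as the paper's: both reduce to Lemma~\ref{LEM_Dom_Ran_Extension} and then observe that only finitely many additional values of $m$ (resp.\ $n$) need to be excluded to guarantee that no orbit is closed, so that niceness and the coding of $r$ are preserved. The paper phrases this more tersely (``possibly only one choice of $n$ (or $m$) may close an orbit of $s$''), while you exclude the slightly larger set $\dom(s)\cup\{n\}$ to make the verification $\calO^c_{s'}=\calO^c_s$ explicit; the arguments are otherwise the same.
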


\begin{proof}
    This follows immediately from Lemma~\ref{LEM_Dom_Ran_Extension}.
    Note, that possibly only one choice of $n$ (or $m$) may close an orbit of $s$, which immediately implies that $(s \cup \simpleset{(n,m)}, E) \in \bbZ_\calG(r)$ for almost all $n$ (or $m$).
\end{proof}

Secondly, we again verify the stronger generic hitting lemma required for tightness (as in Lemma~\ref{LEM_Tree_Extension}).
Hence, also $\bbZ_\calG(r)$ may be used to construct or force a tight cofinitary group.

\begin{lem}\label{LEM_Tree_Extension_Coding}
    Let $\calG$ be a cofinitary group.
    Let $T \in \calI_i(\calG)^+$, $t \in T$ and $(s, E) \in \bbZ_\calG(r)$.
    Then, there is $(s', E) \in \bbZ_\calG(r)$, $t' \in T$ and $k \in \dom(t') \setminus \dom(t)$ such that $(s', E) \extends (s, E)$, $t \subseteq t'$ and $t'(k) = s'(k)$.
\end{lem}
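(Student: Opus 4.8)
The plan is to follow the proof of Lemma~\ref{LEM_Tree_Extension} almost verbatim, carrying along the extra bookkeeping needed to keep membership in $\bbZ_\calG(r)$. First I would split $E = E_0 \cup E_1$ with $E_0 \subseteq W^1_\calG$ and $E_1 \subseteq W^{>1}_\calG$, closing $E_0$ under $W^1_\calG$-subwords of members of $E_1$ exactly as before, and I would pick $N < \omega$ large enough to satisfy conditions (N1)--(N3) from that proof, but \emph{additionally} large enough that $\bigcup \calO^c_s \subseteq N$ and, say, that $|\calO^c_s| < N$ — so that anything we attach above $N$ cannot be the point whose orbit-parity is currently being decided. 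Then I would apply Lemma~\ref{LEM_Many_Extensions} to obtain $t' \in T$ with $t \subseteq t'$ admitting more than $N$ values $k \in \dom(t')\setminus\dom(t)$ with $(s \cup \{(k,t'(k))\}, E_0) \extends (s, E_0)$, and by injectivity of $t'$ pick such a $k_0$ with $t'(k_0) \geq N$; set $s' := s \cup \{(k_0, t'(k_0))\}$.

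The computation that $(s', E) \extends (s, E)$ — i.e. $\fix(w[s']) = \fix(w[s])$ for all $w \in E_1$, and the single-letter case handled via $(s', E_0) \extends (s, E_0)$ — is literally the argument in Lemma~\ref{LEM_Tree_Extension}: since $k_0, s'(k_0) \geq N$ while everything relevant to the old condition lives below $N$, the new pair $(k_0, t'(k_0))$ neither creates a new fixed point of any $w \in E_1$ nor destroys an old one. So the only genuinely new task is to check $(s', E) \in \bbZ_\calG(r)$, namely that $s'$ is still nice and still codes $r$. Niceness: adding $(k_0, t'(k_0))$ with both coordinates $\geq N > \bigcup\calO^c_s$ can only either enlarge an existing open orbit of $s$ or attach $t'(k_0)$ to the open orbit containing $k_0$; in no case does it turn an open orbit into a closed one (the endpoints $k_0 \notin \dom(s)$ — wait, rather we must ensure $k_0 \notin \ran(s)$ and $t'(k_0) \notin \dom(s)$, which holds by the choice in Lemma~\ref{LEM_Many_Extensions} that $k_0 \notin \dom(s)$, $t'(k_0)\notin\ran(s)$, together with $k_0, t'(k_0) \geq N$), so $\calO^c_{s'} = \calO^c_s$. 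Hence $o_{s'} = o_s$, and since $s$ codes $r$, so does $s'$; niceness of $s'$ is immediate because the family of closed orbits is unchanged and their minima are already $< \min(\omega \setminus \bigcup\calO^c_s)$, which has only grown.

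I expect the main (and really the only) obstacle to be the orbit bookkeeping: one must be careful that the single new pair $(k_0, t'(k_0))$ does not inadvertently \emph{close} an orbit — this is exactly what the remark after Definition~\ref{DEF_Partial_Orbit_Function} and the ``niceness'' clause are designed to prevent, and it is guaranteed here precisely because both new coordinates were chosen above $N$ and hence outside $\dom(s) \cup \ran(s)$. Once $\calO^c_{s'} = \calO^c_s$ is observed, the coding condition $o_{s'} \subseteq r$ is inherited for free, and the rest is the verification already carried out in Lemma~\ref{LEM_Tree_Extension}. In writing this up I would simply say ``arguing as in Lemma~\ref{LEM_Tree_Extension}, choose $N$ additionally large enough that $\bigcup\calO^c_s \subseteq N$; the resulting $s'$ has $\calO^c_{s'} = \calO^c_s$, so $s'$ is nice and codes $r$, i.e. $(s',E) \in \bbZ_\calG(r)$'' and leave the unchanged fixed-point computation to the reader or to a reference to the previous proof.
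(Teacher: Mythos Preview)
Your proposal is correct and follows exactly the paper's approach: run the proof of Lemma~\ref{LEM_Tree_Extension} and observe that the single pair $(k_0, t'(k_0))$ adjoined to $s$ has both coordinates outside $\dom(s)\cup\ran(s)$ and the two coordinates are distinct, so no orbit is closed, $\calO^c_{s'}=\calO^c_s$, and $(s',E)\in\bbZ_\calG(r)$. The paper's own proof is a single sentence to precisely this effect; your additional requirements on $N$ (e.g.\ $\bigcup\calO^c_s\subseteq N$) are already implied by (N1) and can be dropped.
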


\begin{proof}
    The function pair $(n,m)$ added to $s$ in Lemma~\ref{LEM_Many_Extensions} satisfies $n, m \notin \ran(s) \cup \dom(s)$ and $n \neq m$.
    Hence, the extension $s \cup \simpleset{(n,m)}$ does not close an orbit of $s$, which immediately implies that $s \cup \simpleset{(n,m)} \in \bbZ_\calG(r)$.
\end{proof}

Finally, we need to verify that we may generically code as much information of $r$ as desired.
To this end, we prove that for any $(s,E) \in \bbZ_\calG$ and $O \in \calO^o_s$ and almost all $k < \om$ we may extend $(s,E)$ to close $O$ in length $k$.

\begin{lem}\label{LEM_Generic_Closing_Of_Orbits}
    Let $\calG$ be a cofinitary group, $(s, E) \in \bbZ_\calG$ and $n \in \omega \setminus \bigcup \calO^c_s$.
    Then, there is a $K < \omega$ such that for all $k > K$ there is $(t,E) \in \bbZ_\calG$ with $(t,E) \extends (s,E)$, $O_t(n) \in \calO^c_t$ and $\left|O_t(n)\right| = k$.
\end{lem}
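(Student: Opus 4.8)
\emph{Plan.}\ The idea is to realise the required closed orbit of $n$ as a cycle obtained by grafting a long chain of carefully chosen fresh points onto the open orbit $O := O_s(n)$, and then to check that this ``closing'' does not change any fixed‑point set $\fix(w[\cdot])$, $w\in E$. Write $O$ as the chain $a_0\to a_1\to\dots\to a_p$ (so $s(a_i)=a_{i+1}$ for $i<p$, $a_0\notin\ran(s)$, $a_p\notin\dom(s)$; the case $O=\{n\}$ with $n\notin\dom(s)\cup\ran(s)$ is $p=0$). First fix $N<\om$ so large that $\dom(s)\cup\ran(s)\cup O\subseteq N$, that $g[\dom(s)\cup\ran(s)\cup O]\subseteq N$ for every $g\in E\restr\calG$, and that $\fix(g)\subseteq N$ for every $g\in E\restr\calG\setminus\{\id\}$. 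Let $L$ be the maximal length $\sum_i|k_i|$ of a word occurring in $E$, put $\calB:=\bigcup_{g\in E\restr\calG}g[N]$ (a finite set, closed under passing to $g^{-1}$ because $E\restr\calG$ is), and set $K:=N+p+L$. I claim this $K$ works; note that for $k>K$ one has $m:=k-p-1\geq 1$, every $x$‑block of a word in $E$ has length $<k-p$, and $k$ exceeds every exponent $j$ with $x^j\in E$.

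Given $k>K$, put $m:=k-p-1$ and choose distinct points $b_1,\dots,b_m\in\om\setminus(\dom(s)\cup\ran(s)\cup O\cup\calB)$, all $\geq N$, which are moreover \emph{free for $E$}: $g(b_i)\neq b_j$ for all $g\in E\restr\calG\setminus\{\id\}$ and all $i,j\leq m$. This is possible by a straightforward recursion on $j$, since at each step only finitely many values are forbidden. Define
\[
    t:=s\cup\{(a_p,b_1),\,(b_1,b_2),\,\dots,\,(b_{m-1},b_m),\,(b_m,a_0)\}.
\]
Then $t$ is a finite partial injection, the orbit of $n$ under $t$ is the cycle $C:=O\cup\{b_1,\dots,b_m\}$ of length $p+1+m=k$, and the pairs of $t\setminus s$ --- the \emph{chords} --- all lie inside $C$. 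It remains to show $(t,E)\extends(s,E)$, i.e.\ $\fix(w[t])=\fix(w[s])$ for each $w\in E$; the inclusion $\fix(w[s])\subseteq\fix(w[t])$ is clear since $t\supseteq s$.

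For the converse, first take $w=g_lx^{k_l}\cdots g_0x^{k_0}\in E$ with all $g_i\neq\id$, and let $c\in\fix(w[t])$. I would follow the evaluation of $w[t]$ at $c$ through the input and output of each maximal $x$‑block $x^{k_i}$, calling these points the \emph{checkpoints}. As $w[t](c)$ is defined, every checkpoint lies in $\dom(t)\cup\ran(t)=\dom(s)\cup\ran(s)\cup C$ (using $\{a_0,\dots,a_{p-1}\}\subseteq\dom(s)$, $\{a_1,\dots,a_p\}\subseteq\ran(s)$), whose part $\geq N$ is exactly $\{b_1,\dots,b_m\}$; so a checkpoint is either $<N$ or some $b_i$. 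The crucial observation is that \emph{no block input --- in particular not $c$ --- is a $b_i$}: the input of $x^{k_{i+1}}$ is $g_i(y_i)$ with $y_i$ the (checkpoint) output of $x^{k_i}$, and if $g_i(y_i)$ were some $b_{j}$ then, since $g_i$ is free for $E$ and maps $O$ into $N$, $y_i$ could not lie in $C$, so $y_i<N$ and hence $b_{j}=g_i(y_i)\in\calB$, a contradiction; and if $c$ itself were a $b_j$, then $b_j=w[t](c)=g_l(y_l)$ and the same reasoning gives $b_j\in\calB$. Hence every block input, and $c$, is $<N$. Now take any block of the run. If its input is outside $C$, the whole block stays outside $C$, since $t$ agrees with $s$ off $C$ and $s$ maps $\om\setminus C$ into itself; so it uses no chord. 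If its input is some $a_j\in O$, then, as the block has length $<k-p$ and therefore cannot run from $a_p$ all the way around $C$ to $a_0$ (nor from $a_0$ around to $a_p$), either it stays inside $O$, using only edges of $s$, or it crosses one of the two seams $a_p\to b_1$, $a_0\to b_m$ and thus \emph{ends} at some $b_i$; but then the group letter $g$ immediately following this block sends $b_i$ outside $C$ to the next block input (or to $c$, if this was the last block), which is $<N$, whence $b_i\in g^{-1}[N]\subseteq\calB$, a contradiction. Thus no block of the run uses a chord, so the run of $w[t]$ on $c$ is literally the run of $w[s]$ on $c$; in particular $w[s](c)=w[t](c)=c$, so $c\in\fix(w[s])$, as required.

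Finally, the pure‑power words $w=x^j\in E$, where $0<j<k$, are handled directly: a fixed point of $t^j$ cannot lie in $C$, because a $j$‑fold rotation of the $k$‑cycle $C$ has no fixed point when $0<j<k$, and off $C$ the map $t^j$ agrees with $s^j$; hence $\fix(x^j[t])=\fix(x^j[s])$. This finishes the proof. The substance of the argument --- and the only genuinely delicate step --- is the checkpoint analysis in the third paragraph: it is precisely because nice words interleave $x$‑blocks with non‑trivial group letters that the only conceivable way a run on a fixed point could traverse a chord of $C$ is by producing a fresh point $b_i$ as the output of a block, and that possibility is shut down by having chosen the $b_i$ outside $\calB$.
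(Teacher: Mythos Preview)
Your proof is correct and follows essentially the same construction as the paper: close the open orbit by a chain of fresh points chosen so that every non-identity $g\in E\restr\calG$ sends each fresh point outside $\dom(s)\cup\ran(s)$ and outside the set of fresh points. The verification that no new fixed points arise is organised slightly differently---you run a block-by-block ``checkpoint'' analysis showing each $x$-block input is $<N$ and hence no block traverses a chord, whereas the paper locates the \emph{first} point at which the computation leaves $\dom(s)\cup\ran(s)$ and derives a contradiction from the next group letter---but these are two presentations of the same idea, and your threshold $K$ is merely a bit more generous than the paper's $|O_s(n)|+L$.
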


\begin{proof}
    By Lemma~\ref{LEM_Dom_Ran_Extension} we may assume that $n \in \dom(s) \cup \ran(s)$.
    As $O_s(n) \in \calO^o_s$ choose $n_{-} < \omega$ to be the unique element of $O_s(n) \setminus \ran(s)$ and $n_+$ be the unique element of $O_s(n) \setminus \dom(s)$.
    Let $L := \max \set{\left|w\right|}{w \in E}$ and set $K := \left|O_s(n)\right| + L$.
    Now, let $k > K$.
    Choose $L' \geq L$ and pairwise different natural numbers $A := \simpleset{a_0, \dots, a_{L' - 1}}$ such that
    \begin{enumerate}
        \item $\left|O_s(n)\right| + L' = k$,
        \item for all $a \in A$ we have
        \begin{enumerate}
            \item $a \notin \dom(s) \cup \ran(s)$,
            \item for all $g \in E \restr \calG \setminus \simpleset{\id}$ we have $g(a) \notin \dom(s) \cup \ran(s) \cup A$.
        \end{enumerate}
    \end{enumerate}
    Note, that we can ensure (2b) as every $g \in E \restr \calG \setminus \simpleset{\id}$ only has finitely many fixpoints.
    We claim that for
    $$
        t := s \cup \simpleset{(n_{+}, a_0)} \cup \set{(a_i, a_{i + 1})}{i < L' - 1}  \cup \simpleset{(a_{L' - 1}, n_{-})}
    $$
    we have that $(t, E) \in \bbZ_\calG$.
    Clearly, then $(t,E) \extends (s,E)$, $O_t(n) \in \calO^c_t$ and $\left|O_t(n)\right| = k$ by (1).
    Visualized, the orbit $O_t(n)$ then looks as follows:

    \begin{center}
    \begin{tikzpicture}[->,>=stealth',auto,node distance=1.3cm, thick,main node/.style={rectangle,draw}]

    \node[main node] (1) {$n_{-}$};
    \node (2) [right of=1] {$\dots$};
    \node[main node] (3) [right of=2] {$n$};
    \node (4) [right of=3] {$\dots$};
    \node[main node] (5) [right of=4] {$n_{+}$};
    \node[main node] (6) [right of=5] {$a_0$};
    \node[main node] (7) [right of=6] {$a_1$};
    \node (8) [right of=7] {$\dots$};
    \node[main node] (9) [right of=8] {$a_{L'-1}$};

    \path[every node/.style={font=\sffamily\small}]
        (1) edge node [right] {} (2)
        (2) edge node [right] {} (3)
        (3) edge node [right] {} (4)
        (4) edge node [right] {} (5)
        (5) edge node [right] {} (6)
        (6) edge node [right] {} (7)
        (7) edge node [right] {} (8)
        (8) edge node [right] {} (9)
        (9) edge[bend right] node [left] {} (1);
    \end{tikzpicture}
    \end{center}
    So, let $w \in E$.
    If $w = x^{k_0}$ for some $k_0 > 0$, note that $\fix(w[s]) = \fix(w[t])$ as by choice of $L$ we have $\left|O_t(n)\right| = k > L \geq \left|w\right| = k_0$.
    Hence, we may write $w = gv$ for some $g \in \calG \setminus \simpleset{\id}$ and $v \in W_\calG$.
    Towards  contradiction, assume that $d \in \fix(w[t]) \setminus \fix(w[s])$.
    First, note that $v[t](d) \in \dom(s) \cup \ran(s) \cup A$.
    Thus, by (2b) we have $d = w[t](d) = (gv)[t](d) \notin A$.
    Hence, $d \in \dom(s) \cup \ran(s)$.

    If the entire computation of $d$ along $w[t]$ stays in $\dom(s) \cup \ran(s)$, then $w[t](d) = w[s](d)$, contradicting $d \notin \fix(w[s])$.
    Thus, write $w = w_1 w_0$ with $w_0, w_1 \in W_\calG$ with $w_0$ minimal such that $w_0[t](d) \notin \dom(s) \cup \ran(s)$, i.e.\ $w_0[t](d) \in A$.
    By (2b) the left-most letter of $w_0$ has to be $x$ or $x^{-1}$, w.l.o.g.\ assume $w = w_1 x w_0'$ (the other case is symmetric).
    As we have $(w_0')[t](d) \in \dom(s) \cup \ran(s)$ and $(xw_0')[t](d) \in A$ we get $(w_0')[t](d) = n_{+}$.
    Finally, write $w = w_1'gx^lw_0'$ for some $w_1' \in W_\calG$, $g \in \calG \setminus \simpleset{\id}$ and $l > 0$.
    Then, $l < \left|w\right| \leq L \leq L'$ implies that
    $$
        (x^lw_0')[t](d) = (x^l)[t](n_{+}) \in A.
    $$
    Hence, by (2b) we have $(gx^lw_0')[t](d) \notin \dom(s) \cup \ran(s) \cup A$.
    Thus, if $w_1'$ is the empty word, this contradicts $d \in \dom(s) \cup \ran(s)$ and otherwise $(w_1'gx^lw_0')[t](d)$ is undefined, a contradiction.
\end{proof}

\begin{cor}\label{COR_Generic_Coding}
    Let $\calG$ be a cofinitary group and $(s,E) \in \bbZ_\calG(r)$.
    Then, there is $(t, E) \in \bbZ_\calG(r)$ such that $(t, E) \extends (s, E)$ and $\left|\calO^c_s\right| < \left|\calO^c_t\right|$.
\end{cor}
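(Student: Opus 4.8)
The plan is to close exactly one new orbit of $s$, selecting it so that niceness and the coding requirement are both preserved, and with a length of the correct parity. Put $N := \left|\calO^c_s\right|$ and let $n := \min\!\left(\omega \setminus \bigcup \calO^c_s\right)$, which is well-defined as $s$ is finite; by choice of $n$, every natural number below $n$ lies in a closed orbit of $s$ and hence in $\dom(s) \cap \ran(s)$. In particular $n \in \omega \setminus \bigcup \calO^c_s$, so Lemma~\ref{LEM_Generic_Closing_Of_Orbits} applies and yields a bound $K$. I would then fix $k > K$ with $k \equiv r(N) \pmod{2}$ and apply the lemma to get $(t,E) \in \bbZ_\calG$ with $(t,E) \extends (s,E)$, $O_t(n) \in \calO^c_t$ and $\left|O_t(n)\right| = k$.

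The next step is to record two structural facts about $t$. First, inspecting the proof of Lemma~\ref{LEM_Generic_Closing_Of_Orbits}, the only orbit that becomes closed in passing from $s$ to $t$ is $O_t(n)$; together with the fact that any $O \in \calO^c_s$ satisfies $O \subseteq \dom(s) \cap \ran(s)$ and is therefore unchanged and still closed in the extension $t \supseteq s$, this gives $\calO^c_t = \calO^c_s \cup \simpleset{O_t(n)}$. Second, $\min O_t(n) = n$: indeed any $m < n$ lies in a closed orbit of $s$, which is also a closed orbit of $t$ and is distinct from $O_t(n)$ (it omits $n$), so $m \notin O_t(n)$, whence $\min O_t(n) \ge n$; and $n \in O_t(n)$.

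From these, niceness of $t$ is immediate: $n = \min O_t(n) \in \bigcup \calO^c_t$, so $\min\!\left(\omega \setminus \bigcup \calO^c_t\right) > n$, while each $O \in \calO^c_s$ has $\min O < n$ by niceness of $s$; hence $\min O < \min\!\left(\omega \setminus \bigcup \calO^c_t\right)$ for every $O \in \calO^c_t$. For the coding, since $O_t(n)$ has a strictly larger minimum than every member of $\calO^c_s$, it is the last element of $\calO^c_t$ in the canonical well-order, so $o_t$ extends $o_s$ by the single value $o_t(N) = \left|O_t(n)\right| \bmod 2 = k \bmod 2 = r(N)$. Thus $o_t \subseteq r$, i.e.\ $(t,E) \in \bbZ_\calG(r)$, and since $(t,E) \extends (s,E)$ and $\left|\calO^c_t\right| = N + 1 > N = \left|\calO^c_s\right|$, we are done.

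The only delicate point is the bookkeeping in the second paragraph: one must make sure the extension returned by Lemma~\ref{LEM_Generic_Closing_Of_Orbits} closes precisely the one orbit $O_t(n)$ and that $\min O_t(n) = n$, so that niceness persists and the newly coded bit lands at position $N$. Matching the parity of that bit to $r(N)$ costs nothing, since the lemma permits $\left|O_t(n)\right|$ to be any $k$ above the threshold $K$.
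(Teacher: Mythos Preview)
Your proof is correct and follows essentially the same approach as the paper's: pick $n=\min(\omega\setminus\bigcup\calO^c_s)$, apply Lemma~\ref{LEM_Generic_Closing_Of_Orbits} to close the orbit of $n$ with length of the required parity, and observe that no other orbit is closed. The paper compresses your second and third paragraphs into the single sentence ``$t$ can be chosen to not close any other orbits, so that by choice of $n$ we have that $t$ is nice and codes $r$''; your explicit verification that $\min O_t(n)=n$, that niceness is inherited, and that $o_t$ extends $o_s$ by exactly the bit $r(N)$ is a welcome elaboration of that sentence but not a different argument.
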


\begin{proof}
    Let $n := \min (\omega \setminus \bigcup \calO^c_s)$.
    By Lemma~\ref{LEM_Generic_Closing_Of_Orbits} there is $(t, E) \in \bbZ_\calG$ such that $(t, E) \extends (s,E)$, $O_t(n) \in \calO^c_t$ and
    $$
        \left|O_t(n)\right| \equiv r(\left|\calO^c_s\right|) \qquad (\text{mod } 2).
    $$
    Further, $t$ can be chosen to not close any other orbits, so that by choice of $n$ we have that $t$ is nice and codes $r$.
    Hence, $(t,E) \in \bbZ_\calG(r)$.
\end{proof}

\begin{cor}\label{COR_Orbit_Function_Unbounded}
    Let $\calG$ be a cofinitary group.
    Then we have
    $$
        \bbZ_\calG \forces \dot{f}_{\text{gen}} \text{ only has finite orbits and } o_{\dot{f}_{\text{gen}}} \text{ is an unbounded real over } V.
    $$
\end{cor}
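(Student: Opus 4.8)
The plan is to prove both parts by density arguments resting on Lemma~\ref{LEM_Generic_Closing_Of_Orbits}, together with one persistence observation: if $(t,F)\extends(s,E)$ and $O\in\calO^c_s$, then $O\in\calO^c_t$, since $O\subseteq\dom(s)\cap\ran(s)$ and $s\subseteq t$ force $t\restr O=s\restr O$, so $O$ remains a single closed orbit. Consequently every closed orbit of a condition in the generic filter $G$ is a finite orbit of $\dot f_{\text{gen}}$.

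For ``only finite orbits'' I would show that for each $k<\omega$ the set $\set{(s,E)\in\bbZ_\calG}{\simpleset{0,\dots,k}\subseteq\bigcup\calO^c_s}$ is dense: given $(s,E)$, repeatedly apply Lemma~\ref{LEM_Generic_Closing_Of_Orbits} (preceded by Lemma~\ref{LEM_Dom_Ran_Extension} when needed) to the least natural number not yet lying in a closed orbit, each time with any admissible length, until $\simpleset{0,\dots,k}$ is covered. Since $G$ meets each of these sets, every $n<\omega$ lies in a finite orbit of $\dot f_{\text{gen}}$; as the orbits partition $\omega$ there are infinitely many, so $o_{\dot f_{\text{gen}}}$ is well-defined as in Definition~\ref{DEF_Full_Orbit_Function}.

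For unboundedness, fix $h\in\omega^\omega\cap V$ and $N<\omega$, and write $O^s_n$ for the $n$-th element of $\calO^c_s$ in the well-order by minimum; I claim $D_{h,N}:=\set{(s,E)\in\bbZ_\calG}{s\text{ nice and }|O^s_n|>h(n)\text{ for some }n\in[N,|\calO^c_s|)}$ is dense. Given $(s,E)$, I first pass to a nice condition by repeatedly closing the orbit of $\min(\omega\setminus\bigcup\calO^c)$: each such step (via Lemma~\ref{LEM_Generic_Closing_Of_Orbits}) adds exactly one closed orbit, whose minimum equals the old value of $\min(\omega\setminus\bigcup\calO^c)$, and strictly raises that value, so after finitely many steps every closed orbit has minimum below $\min(\omega\setminus\bigcup\calO^c)$, i.e.\ the condition is nice; further such steps preserve niceness and raise $|\calO^c|$ by one each time, so I reach a nice $(s',E)$ with $m:=|\calO^c_{s'}|\geq N$. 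A last application of Lemma~\ref{LEM_Generic_Closing_Of_Orbits} closes the orbit of $\min(\omega\setminus\bigcup\calO^c_{s'})$ with some length $k>h(m)$ — possible since all sufficiently large lengths are admissible — yielding a nice condition in $D_{h,N}$, witnessed by $n=m$.

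It remains to read off the generic's orbits from a nice condition in $G$: if $(s,E)\in G$ with $s$ nice and $|\calO^c_s|=m$, then $O^s_0,\dots,O^s_{m-1}$ are exactly the first $m$ orbits of $\dot f_{\text{gen}}$ in the min-order, because any orbit $P$ of $\dot f_{\text{gen}}$ with $\min(P)<\min(O^s_{m-1})$ has $\min(P)<\min(\omega\setminus\bigcup\calO^c_s)$ by niceness, hence $\min(P)\in\bigcup\calO^c_s$, forcing $P=O^s_i$ for some $i<m$ (orbits of $\dot f_{\text{gen}}$ sharing a point coincide). Thus for $(s,E)\in G\cap D_{h,N}$ we obtain $n\geq N$ with $|O_n|=|O^s_n|>h(n)$, where $O_n$ is the $n$-th orbit of $\dot f_{\text{gen}}$; letting $N$ and then $h$ vary shows that the orbit-length sequence $\seq{|O_n|}{n<\omega}$ of $\dot f_{\text{gen}}$ — the real meant by ``$o_{\dot f_{\text{gen}}}$ is an unbounded real over $V$'' — is unbounded over $V$. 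The main obstacle is precisely this last matching step: speaking of ``the $n$-th orbit'' of the generic inside a condition is only legitimate once the condition is nice, which is the whole reason for the niceness clause in Definition~\ref{DEF_Partial_Orbit_Function} and forces the preliminary passage to a nice condition in the density argument.
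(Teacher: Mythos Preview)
Your proof is correct and follows the same approach as the paper's one-line argument: both rest on Lemma~\ref{LEM_Generic_Closing_Of_Orbits} to establish the relevant density. Your version is considerably more detailed --- in particular, the passage to a nice condition so that the generic's $n$-th orbit agrees with the condition's $n$-th closed orbit, and your reading of ``$o_{\dot f_{\text{gen}}}$ unbounded'' as the orbit-length sequence $n\mapsto|O_n|$ (necessary since $o_f$ as literally defined takes values in $2$) --- fills in precisely the work the paper suppresses.
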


\begin{proof}
    Immediately follows from Lemma~\ref{LEM_Generic_Closing_Of_Orbits} as generically we may close any open orbit in arbitrarily long length.
\end{proof}

\subsection{Coding into orbits of every new word}

In the last section we have seen how to code a real into the orbit function of the Zhang generic.
Now, we will consider a slightly different orbit function, which is stable under cyclic permutations and inverses thereof.
Hence, in order to code a real into every new element of our cofinitary group, we will be able to restrict to nice words.

\begin{defi} \label{DEF_Stable_Orbit_Function}
    Assume $f \in \omega^\omega$ only has finite orbits and for every $n < \omega$ only finitely many orbits of length $n$.
    Let $\set{p_n}{n < \omega}$ enumerate all primes.
    Then, define a function $o_f^\dagger:\omega \to 2$ by
    $$
        o^\dagger_f(n) := (\left|\set{O \in \mathcal{O}_f}{\left|O\right| = p_n}\right| \text{ mod } 2).
    $$
    Similarly, for any finite partial injection $s: \omega \overset{\text{finite}}{\longrightarrow} \omega$ define a function $o^\dagger_s:\omega \to 2$ by
    $$
        o^\dagger_s(n) := (\left|\set{O \in \mathcal{O}^c_s}{\left|O\right| = p_n}\right| \text{ mod } 2).
    $$
    Hence, for every $n < \omega$ we are counting how many orbits of size $p_n$ there are.
    For a real $r$ and $n < \om$ we say $s$ codes $r$ up to $n$ iff $r \restr (n + 1) = o^\dagger_s \restr (n + 1)$.
    Finally, we say $f$ codes $r$ iff $o^\dagger_f = r$.
\end{defi}

\begin{rem} \label{LEM_Stable_Orbit_Function_Well_Defined}
    In the previous section, we have seen that the Zhang generic only has closed orbits (see Lemma~\ref{LEM_Generic_Closing_Of_Orbits}).
    Further, note that for every $n < \omega$ closing a new orbit in length $n$ implies that $x^n$ has a new fixpoint.
    Hence, every $(s,E) \in \mathbb{Z}_\calG$ with $x^n \in E$ forces that the number of orbits of length of $\dot{f}_{\text{gen}}$ is decided, i.e.\ $\dot{f}_{\text{gen}}$ only has finitely many orbits of length $n$.
    Hence, $\dot{f}_{\text{gen}}$ satisfies the assumption of Definition~\ref{DEF_Stable_Orbit_Function}.
\end{rem}

\begin{rem} \label{REM_Orbits_Under_Inverse}
    As $f$  and $f^{-1}$ have the same orbits, we obtain that if $f$ only has finite orbits and for every $n < \omega$ only finitely many orbits of length $n$, then the same holds for $f^{-1}$ and $o^\dagger_f = o^\dagger_{f^{-1}}$.
\end{rem}

\begin{lem} \label{LEM_Orbits_Under_Cyclic_Permutations}
    Let $f, g \in \omega^\omega$ be bijections.
    Then, the map $\pi:\mathcal{O}_{fg} \to \mathcal{O}_{gf}$ defined for $O \in \mathcal{O}_{fg}$ by
    $$
        \pi(O) = g[O],
    $$
    defines a bijection.
    Further, $\pi$ maps every orbit of $fg$ to an orbit of $gf$ of the same length.
\end{lem}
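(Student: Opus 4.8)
The plan is to reduce everything to the elementary \emph{conjugacy identity}
\[
    g \circ (fg)^k = (gf)^k \circ g \qquad \text{for every } k \in \bbZ .
\]
For $k \geq 0$ this is nothing but a reassociation of the word $gfgf\cdots fg$ of length $2k+1$; for $k < 0$ one first rewrites $(fg)^{-1} = g^{-1}f^{-1}$ and telescopes in the same way (equivalently, a one-line induction on $|k|$). Applying both sides to a point $n < \omega$ yields $g\bigl((fg)^k(n)\bigr) = (gf)^k\bigl(g(n)\bigr)$ for all $k$, and hence
\[
    g\bigl[O_{fg}(n)\bigr]
    = \set{g\bigl((fg)^k(n)\bigr)}{k \in \bbZ}
    = \set{(gf)^k\bigl(g(n)\bigr)}{k \in \bbZ}
    = O_{gf}\bigl(g(n)\bigr).
\]

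So $g$ carries each orbit of $fg$ onto an orbit of $gf$, which shows that $\pi$ is a well-defined map $\calO_{fg} \to \calO_{gf}$; well-definedness is in fact automatic, since $\pi(O) = g[O]$ depends only on the set $O$. Since $g$ is a bijection of $\omega$, its restriction to any $O \in \calO_{fg}$ is injective, so $|\pi(O)| = |g[O]| = |O|$; together with the displayed formula this gives the length-preservation clause, uniformly for finite cycles and for infinite $\bbZ$-chains. Injectivity of $\pi$ is immediate: $g[O] = g[O']$ implies $O = O'$ upon applying $g^{-1}$.

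For surjectivity I would invoke the statement symmetric in $f$ and $g$: the map $O \mapsto f[O]$ is, by the same computation, a map $\calO_{gf} \to \calO_{fg}$ sending each orbit of $gf$ to an orbit of $fg$. Given $P \in \calO_{gf}$, the set $f[P]$ is therefore an orbit of $fg$, and $\pi(f[P]) = g\bigl[f[P]\bigr] = (gf)[P]$. Because $P$ is an orbit of $gf$ it is closed under both $gf$ and $(gf)^{-1}$, hence $(gf)[P] = P$; thus $\pi(f[P]) = P$ and $\pi$ is onto.

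There is no real obstacle in this argument; its entire content lies in the conjugacy identity and the bookkeeping around it. The only point deserving a moment's attention is that an orbit is by definition a \emph{two-sided} invariant set — closed under the map and its inverse — which is exactly what legitimizes $(gf)[P] = P$ in the surjectivity step and what lets the proof treat infinite orbits on the same footing as finite cycles.
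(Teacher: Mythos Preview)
Your proof is correct. The route differs slightly from the paper's: you lean on the conjugacy identity $g\circ(fg)^k=(gf)^k\circ g$ to get $g[O_{fg}(n)]=O_{gf}(g(n))$ in one stroke, whereas the paper verifies directly that $g[O]$ is nonempty, closed under $gf$ and $(gf)^{-1}$, and minimal. For bijectivity the paper simply observes that $\psi(O)=g^{-1}[O]$ is well-defined $\calO_{gf}\to\calO_{fg}$ by the same argument and is the obvious inverse of $\pi$; your surjectivity step instead invokes the symmetric map $P\mapsto f[P]$ and the invariance $(gf)[P]=P$. Both arguments are short and routine; your conjugacy-identity packaging is a touch slicker for well-definedness, while the paper's $g^{-1}$ inverse is marginally more direct than going through $f$.
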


\begin{proof}
    The second part follows immediately as $g$ is a bijection.
    First, we verify that $\pi$ maps to $\mathcal{O}_{gf}$.
    So, let $O \in \mathcal{O}_{fg}$, i.e.\ $O$ is a minimal non-empty set closed under applications of $fg$ and $(fg)^{-1}$.
    Clearly, $\pi[O]$ is non-empty and for $m \in g[O]$ choose $n < \om$ such that $g(n) = m$.
    Then, we compute
    $$
        (gf)(m) = (gfg)(n) = g(fg(n)) \in g[O],
    $$
    as $fg(n) \in O$.
    Similarly, we have
    $$
        (gf)^{-1}(m) = (f^{-1}g^{-1}g)(n) = (gg^{-1}f^{-1})(n) = g((fg)^{-1}(n)) \in g[O],
    $$
    as $(fg)^{-1}(n) \in O$.
    Hence, $g[O]$ is closed under applications of $gf$ and $(gf)^{-1}$.
    Now, let $P \subseteq g[O]$ be a non-empty subset closed under applications of $gf$ and $(gf)^{-1}$.
    By the same argument as above we have that $g^{-1}[P] \subseteq O$ is a non-empty set closed under applications of $fg$ and $(fg)^{-1}$.
    But $O \in \mathcal{O}_{fg}$, so that $g^{-1}[P] = O$.
    Hence, $P = g[O]$, which shows that $g[O] \in \mathcal{O}_{gf}$.
    Finally, note that by the same argumentation $\psi:\mathcal{O}_{gf} \to \mathcal{O}_{fg}$ defined for $O \in \mathcal{O}_{fg}$ by
    $$
        \psi(O) = g^{-1}[O],
    $$
    is well-defined and clearly the inverse of $\pi$.
    Hence, $\pi$ is bijective.
\end{proof}

\begin{cor} \label{COR_Orbit_Function_Under_Cyclic_Permuations}
    Let $f,g \in \omega^\omega$ be bijections. Assume $fg$ only has finite orbits and for every $n < \omega$ only finitely many orbits of length $n$.
    Then $gf$ has only finite orbits, for every $n < \omega$ only finitely many orbits of length $n$ and $o^\dagger_{fg} = o^\dagger_{gf}$.
\end{cor}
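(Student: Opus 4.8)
The plan is to derive everything directly from Lemma~\ref{LEM_Orbits_Under_Cyclic_Permutations}, which already supplies a length-preserving bijection $\pi\colon\mathcal{O}_{fg}\to\mathcal{O}_{gf}$ given by $\pi(O)=g[O]$. First I would observe that, since $\pi$ is surjective and sends each orbit of $fg$ to an orbit of $gf$ of the same length, every element of $\mathcal{O}_{gf}$ has the form $g[O]$ for some $O\in\mathcal{O}_{fg}$ with $|g[O]|=|O|$; as $fg$ has only finite orbits by hypothesis, it follows that $gf$ has only finite orbits as well.

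Next I would note that, for each fixed $n<\omega$, the bijection $\pi$ restricts to a bijection between $\{O\in\mathcal{O}_{fg}:|O|=n\}$ and $\{O\in\mathcal{O}_{gf}:|O|=n\}$, so these two sets have the same cardinality. Since the former is finite by assumption, so is the latter; hence $gf$ has, for every $n<\omega$, only finitely many orbits of length $n$. In particular $gf$ satisfies the hypotheses of Definition~\ref{DEF_Stable_Orbit_Function}, so $o^\dagger_{gf}$ is well defined.

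Finally, specializing the cardinality equality to the primes $p_n$ gives $\bigl|\{O\in\mathcal{O}_{gf}:|O|=p_n\}\bigr|=\bigl|\{O\in\mathcal{O}_{fg}:|O|=p_n\}\bigr|$ for every $n<\omega$, and reducing these finite numbers modulo $2$ yields $o^\dagger_{gf}(n)=o^\dagger_{fg}(n)$ for all $n$, i.e.\ $o^\dagger_{fg}=o^\dagger_{gf}$. I do not expect any real obstacle: all the substantive work lies in Lemma~\ref{LEM_Orbits_Under_Cyclic_Permutations}, and the corollary is simply a matter of unwinding the bijection and comparing the orbit-length distributions of $fg$ and $gf$ level by level.
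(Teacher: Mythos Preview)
Your proposal is correct and follows exactly the intended approach: the paper states this result as an immediate corollary of Lemma~\ref{LEM_Orbits_Under_Cyclic_Permutations} without further proof, and your argument simply spells out how the length-preserving bijection $\pi$ transfers the finiteness conditions and forces the orbit-length distributions (and hence $o^\dagger$) to coincide.
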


Next, we will show that our orbit function is also stable under finite powers, so that we may restrict to even nicer words, in whose orbit functions we will code a real in the end.
This is the place, where we use the sequence of primes in Definition~\ref{DEF_Stable_Orbit_Function}.

\begin{defi}\label{DEF_Very_Nice_Words}
    Let $\calG$ be a cofinitary group.
    We say a nice word $w \in W^*_\calG$ is {\emph{indecomposable}} iff there is no $v \in W^*_\calG$ and $k > 1$ such that $v^k = v \underset{k\text{-times}}{\dots}v = w$.
    We denote with $W^\dagger_\calG$ the set of all indecomposable words.
\end{defi}

\begin{lem}\label{LEM_Orbit_Function_Of_Powers}
    Let $f \in \omega^\omega$ be a bijection with only finite orbits and for every $n < \omega$ only finitely many orbits of length $n$ and let $k < \omega$.
    Then, $f^k$ only has finite orbits, for every $n < \omega$ only finitely many orbits of length $n$ and $o^\dagger_f(n) = o^\dagger_{f^k}(n)$ for almost all $n < \omega$.
\end{lem}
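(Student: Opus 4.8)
The plan is to reduce everything to the elementary fact about how a single cycle behaves under powering: if $O$ is an orbit of $f$ of length $m$, on which $f$ acts as an $m$-cycle, then $f^k\restr O$ decomposes into exactly $\gcd(m,k)$ orbits of $f^k$, each of length $m/\gcd(m,k)$. I would establish (or quote) this first, since it is the only structural input the argument needs.

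From it the two easy conclusions drop out at once. Each orbit of $f^k$ is contained in an orbit of $f$, hence is finite. Moreover an orbit of $f^k$ of length $\ell$ can only arise from an orbit of $f$ of length $m$ with $m/\gcd(m,k)=\ell$, which forces $m\le k\ell$; since $f$ has only finitely many orbits of each of the finitely many lengths $\le k\ell$, there are only finitely many orbits of $f^k$ of length $\ell$. In particular $f^k$ satisfies the hypotheses of Definition~\ref{DEF_Stable_Orbit_Function}, so $o^\dagger_{f^k}$ is well-defined and the equality is meaningful.

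For the equality I would first discard the finitely many $n$ with $p_n\le k$, so that $\gcd(p_n,k)=1$ from now on. Writing $a_m$ for the number of orbits of $f$ of length $m$, the splitting fact gives the exact count
\[
    \bigl|\{O\in\calO_{f^k} : |O|=p_n\}\bigr| = \sum_{m\,:\,m/\gcd(m,k)=p_n}\gcd(m,k)\,a_m .
\]
Using $\gcd(p_n,k)=1$ one checks that the admissible $m$ are exactly $\{d\,p_n : d\mid k\}$, with $\gcd(d p_n,k)=d$, so this count equals $\sum_{d\mid k}d\,a_{d p_n}$. Reducing modulo $2$ kills every even divisor and leaves
\[
    o^\dagger_{f^k}(n)\equiv a_{p_n}+\sum_{\substack{d\mid k,\ d>1\\ d\text{ odd}}}a_{d p_n}\pmod 2 ,
\]
whose leading term is precisely $o^\dagger_f(n)$.

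The hard part, and the real crux of the lemma, is to show that the correction $\sum_{d\mid k,\ d>1,\ d\text{ odd}}a_{d p_n}$ is even for almost all $n$. Each summand counts orbits of $f$ of the \emph{composite} length $d p_n$, and as $n$ grows these lengths tend to infinity. I expect this to be the main obstacle, and I would isolate it as a separate claim: for each fixed $d>1$ dividing $k$, only finitely many $n$ satisfy $a_{d p_n}\neq 0$. As there are only finitely many relevant $d$, this forces the entire correction to vanish past some threshold, yielding $o^\dagger_{f^k}(n)=o^\dagger_f(n)$ for almost all $n$. The delicate point is precisely the verification of this claim, since controlling the parity of the counts at prime lengths amounts to controlling how many orbits of composite length $f$ can carry; establishing that these composite-length contributions are confined to a bounded range is where the argument must do genuine work.
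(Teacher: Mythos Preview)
Your reduction to the cycle-splitting formula is clean, and the parity computation leading to
\[
o^\dagger_{f^k}(n)\equiv o^\dagger_f(n)+\sum_{\substack{d\mid k,\ d>1\\ d\text{ odd}}}a_{d p_n}\pmod 2
\]
for $p_n>k$ is correct. You have also correctly located the only nontrivial point: controlling the correction term. However, the claim you propose to isolate---that for each fixed odd $d\mid k$ with $d>1$ only finitely many $n$ have $a_{d p_n}\neq 0$---is false under the stated hypotheses, and in fact the lemma as stated admits counterexamples. Take $k=3$ and let $f$ be any bijection of $\omega$ having, for each prime $p>3$, exactly one orbit of length $p$ and exactly one orbit of length $3p$, and no other orbits. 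Then $f$ has only finite orbits and at most one orbit of each length, so the hypotheses hold; but for every $n$ with $p_n>3$ the correction is $a_{3p_n}=1$, whence $o^\dagger_{f^3}(n)=0\neq 1=o^\dagger_f(n)$. Nothing in the hypothesis excludes this.

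The paper's argument takes a different route: for $p_n>|\fix(f^k)|$ it fixes an $f$-orbit $O$ of size $p_n$, observes that the $f^k$-orbits contained in $O$ all have equal size dividing $p_n$, eliminates size~$1$ via the fixpoint bound, and concludes that $O$ is itself an $f^k$-orbit. This yields an injection from $f$-orbits of size $p_n$ into $f^k$-orbits of size $p_n$, but not the reverse; additional $f^k$-orbits of size $p_n$ can live inside $f$-orbits of size $dp_n$ for $d\mid k$, $d>1$---precisely the contributions your correction sum records. So the paper's proof and yours run aground at the same point; your more explicit bookkeeping simply makes the gap visible.
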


\begin{proof}
    Every orbit of $f^k$ of size $n$ is contained in an orbit of $f$ of length at most $kn$, so that $f^k$ only has finite orbits and for every $n < \omega$ only finitely many orbits of length $n < \omega$.

    In particular, $f^k$ only has finitely many orbits of length $1$, i.e.\ only finitely many fixpoints.
    We show for almost all $n < \omega$ that $o^\dagger_f(n) = o^\dagger_{f^k}(n)$.
    So assume $p_n > \left|\fix(f^k)\right|$, where $p_n$ is the $n$-th prime number.
    Fix an orbit $O \in \mathcal{O}_f$ of size $p_n$ and let $\mathcal{P}$ be the set of all $P \in \mathcal{O}_{f^k}$ with $P \subseteq O$.
    Applying $f$ induces bijections between members of $\mathcal{P}$, i.e.\ all of them have the same size.
    As $\mathcal{P}$ partitions $O$ we obtain that $\left|P\right|$ divides $\left|O\right| = p_n$ for all $P \in \mathcal{P}$.
    But $p_n$ is prime, so either $\left|P\right| = 1$ for all $P \in \mathcal{P}$, which implies that $f^k$ has at least $\left|O\right| = p_n$-many fixpoints, contradicting the choice of $n$.
    Thus, $\left|\mathcal{P}\right| = 1$, i.e.\ $f$ and $f^k$ have the same number of orbits of size $p_n$.
    Hence, we proved $o^\dagger_f(n) = o^\dagger_{f^k}(n)$.
\end{proof}

\begin{cor}\label{COR_Restrict_To_Orbits_Of_Indecomposable_Words}
    Let $\calG$ be a cofinitary group and let $f \in \omega^\omega \setminus \calG$ such that $\simpleseq{\calG \cup \simpleset{f} }$ is cofinitary.
    Further, assume that there is $r \in 2^\omega$ such that for every $w \in W^\dagger_\calG$ we have $w[f]$ codes $r$.
    Then, for every $g \in \simpleseq{\calG \cup \simpleset{f} } \setminus \calG$ we have $g$ almost codes $r$, i.e.\ $r(n) = o^\dagger_g(n)$ for almost all $n < \omega$.
\end{cor}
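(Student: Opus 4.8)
The plan is to rewrite an arbitrary $g\in\langle\calG\cup\{f\}\rangle\setminus\calG$, up to operations that leave the orbit function essentially unchanged, as a power $v[f]^{k}$ of a single indecomposable nice word $v$ evaluated at $f$, and then to invoke the hypothesis together with Lemma~\ref{LEM_Orbit_Function_Of_Powers}. The operations I would use, and their effect, are: (i) conjugation inside $\langle\calG\cup\{f\}\rangle$ — conjugate permutations have the same multiset of orbit cardinalities, so conjugation preserves the property of having only finite orbits with finitely many orbits of each length, together with the value of $o^{\dagger}$; equivalently, for composable bijections $a,b$ one has $o^{\dagger}_{ab}=o^{\dagger}_{ba}$ with the finiteness property transferring, by Corollary~\ref{COR_Orbit_Function_Under_Cyclic_Permuations}; (ii) inversion, which does the same by Remark~\ref{REM_Orbits_Under_Inverse}; and (iii) passing to the $k$-th power, which by Lemma~\ref{LEM_Orbit_Function_Of_Powers} again preserves the finiteness property and satisfies $o^{\dagger}_{h^{k}}(n)=o^{\dagger}_{h}(n)$ for all but finitely many $n$.

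Next I would put $g$ into normal form. Write $g=w[f]$ for a word $w$ over $\calG\cup\{x,x^{-1}\}$; by merging consecutive $\calG$-letters, merging consecutive powers of $x$, and deleting any letter equal to $\id$ or $x^{0}$, reduce $w$ to an alternating word whose syllables alternate between elements of $\calG\setminus\{\id\}$ and nonzero powers of $x$, and note that at least one $x$-syllable survives because $g\notin\calG$. Then cyclically reduce: while the first and last syllables of $w$ are of the same kind, pass to a cyclic rotation of $w$ and merge those two syllables; each such step only conjugates $w[f]$, so by (i) it changes neither $o^{\dagger}_{w[f]}$ nor the finiteness property. This halts, and, after applying (ii) to invert the word if the exponent of the final $x$-syllable is negative, it leaves a word $w'$ in the nice form of Definition~\ref{DEF_Nice_Words} — either $x^{k}$ with $k>0$, or $g_{l}x^{k_{l}}\cdots g_{0}x^{k_{0}}$ — with $o^{\dagger}_{g}=o^{\dagger}_{w'[f]}$, and with $g$ having only finite orbits with finitely many of each length precisely when $w'[f]$ does.

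Finally, every nice word is a power $v^{k}$ with $k\ge 1$ of an indecomposable nice word: the syllable-sequence of a nice word is alternating and ends in a positive power of $x$, so its shortest period again has this shape, hence is nice, and, being shortest, is indecomposable (for $w'=x^{k}$ take $v=x$). Thus $v\in W^{\dagger}_{\calG}$ and $w'[f]=v[f]^{k}$; by hypothesis $v[f]$ codes $r$, so $o^{\dagger}_{v[f]}=r$ and in particular $v[f]$ has only finite orbits with finitely many of each length, whence by (iii) we obtain $o^{\dagger}_{g}(n)=o^{\dagger}_{w'[f]}(n)=o^{\dagger}_{v[f]}(n)=r(n)$ for all but finitely many $n$, which is the claim.

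I expect the main obstacle to lie in the cyclic reduction of the second paragraph: one must verify that, starting from a reduced word representing $g\notin\calG$, the reduction really terminates in a word still containing an occurrence of $x$, so that $w'$ is a bona fide nice word (or equal to some $x^{k}$). The delicate sub-case is when the $x$-syllables cancel out completely, leaving a single $h\in\calG\setminus\{\id\}$; then $g$ is a conjugate of $h$, so $o^{\dagger}_{g}=o^{\dagger}_{h}$, and one has to fall back on the fact that $\calG$ itself almost codes $r$ — which, in the intended application (the construction of a co-analytic tight cofinitary group, Theorem~\ref{THM_Coanalytic_Group}), is exactly the invariant carried along the recursion. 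Everything else amounts to routine manipulations with the cited lemmas.
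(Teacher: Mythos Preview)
Your argument is essentially the paper's own: it too passes (citing \cite{VFLS}) to a cyclic rotation $w = w_0 w_1 \in W^*_\calG$ with $g = (w_1 w_0)[f]$ or $g^{-1} = (w_1 w_0)[f]$, then writes $w = v^k$ for some $v \in W^\dagger_\calG$ and invokes Remark~\ref{REM_Orbits_Under_Inverse}, Corollary~\ref{COR_Orbit_Function_Under_Cyclic_Permuations} and Lemma~\ref{LEM_Orbit_Function_Of_Powers} exactly as you do.

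The obstacle you flag in your last paragraph is genuine, and the paper's proof does not address it either. If $g$ happens to be conjugate in $\langle \calG \cup \{f\} \rangle$ to some $h \in \calG \setminus \{\id\}$ --- the simplest instance being $g = fhf^{-1}$ --- then no cyclic rotation of the reduced word for $g$ or $g^{-1}$ lands in $W^*_\calG$, and one is left with $o^\dagger_g = o^\dagger_h$; nothing in the stated hypotheses guarantees that $h$ almost codes $r$, or even that $o^\dagger_h$ is defined (a cofinitary permutation may well have infinitely many orbits of some fixed length). So the corollary as literally stated needs the extra assumption you identify. One caution about your proposed repair for Theorem~\ref{THM_Coanalytic_Group}: in that recursion the elements of $\calG_\xi$ almost code the \emph{earlier} reals $z_\nu$ with $\nu < \xi$, not the current $z_\xi$, so for $g \in \calG_{\xi+1} \setminus \calG_\xi$ conjugate to some $h \in \calG_\xi$ the orbit function of $g$ recovers only such a $z_\nu$; the decoding step ``$z_\xi$ is computable in $g$'' in that proof sketch therefore also needs a more careful formulation in this case.
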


\begin{proof}
    Let $g \in \simpleseq{\calG \cup \simpleset{f} } \setminus \calG$.
    By our slight modification of nice words (see Definition~\ref{DEF_Nice_Words}) and the properties of nice words in \cite{VFLS}, we may choose $w_0, w_1 \in W_\calG$ such that $w:=w_0w_1 \in W^*_\calG$ and $g = (w_1w_0)[f]$ or $g^{-1} = (w_1w_0)[f]$.
    By Remark~\ref{REM_Orbits_Under_Inverse} and Lemma~\ref{LEM_Orbits_Under_Cyclic_Permutations} it suffices to verify that $r(n) = o^\dagger_{w[f]}(n)$ for almost all $n < \omega$.
    If $w \in W^\dagger_\calG$ we are done by assumption, so let $v \in W^\dagger_\calG$ and $k > 1$ such that $w = v^k$.
    Then, $v[f]$ codes $r$, so by Lemma~\ref{LEM_Orbit_Function_Of_Powers} we obtain
    $$
        o^\dagger_{w[f]}(n) = o^\dagger_{v^k[f]}(n) = o^\dagger_{v[f]}(n) = r(n)
    $$
    for almost all $n < \omega$.
\end{proof}

Hence, we will only have to make sure that every indecomposable word codes $r$.
Next, we introduce the variation of Zhang's forcing, which ensures exactly this property.

\begin{defi}\label{DEF_Zhang_Coding_In_Every_Word_Forcing}
    Let $r \in 2^\omega$ be a real and $\calG$ be a cofinitary group.
    Then, we define $\bbZ^\dagger_\calG(r)$ as the set of all elements $(s, E) \in \bbZ_\calG$ such that $E$ is closed under cyclic permutations and inverses thereof in $W^*_\calG$, and for all $w \in E$ if $w = v^k$ for $v \in W^\dagger_\calG$ and $k < \omega$, then $v^l \in E$ for all $0 < l \leq k$ and for all $n < \omega$ with $p_n \leq k$ also $o^\dagger_{v[s]}$ codes $r$ up to $n$.
    We let the order on $\bbZ^\dagger_\calG(r)$ be the restriction of the order on $\bbZ_\calG$.
\end{defi}

\begin{prop}\label{PROP_Simple_Extensions}
    Let $\calG$ be a cofinitary group and $(s, E) \in \bbZ^\dagger_\calG(r)$.
    Then, for every $(t, E) \in \bbZ_\calG$ with $(t, E) \extends (s,E)$ we have $(t, E) \in \bbZ^\dagger_\calG(r)$.
\end{prop}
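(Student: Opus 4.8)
The plan is to verify, one clause at a time, that $(t,E)$ satisfies the three defining requirements of $\bbZ^\dagger_\calG(r)$, noting that only the last one needs any work. The requirement $(t,E)\in\bbZ_\calG$ holds by hypothesis. The requirement that $E$ be closed under cyclic permutations and inverses in $W^*_\calG$, and the structural half of the third clause — that $v^l\in E$ for all $0<l\le k$ whenever $w=v^k\in E$ with $v\in W^\dagger_\calG$ — depend only on the set $E$, which is the same in $(s,E)$ and $(t,E)$, so both are inherited. Hence it remains to fix $w\in E$ written as $w=v^k$ with $v\in W^\dagger_\calG$ and $k<\omega$, and to show that for every $n<\omega$ with $p_n\le k$ the function $o^\dagger_{v[t]}$ codes $r$ up to $n$, knowing that $o^\dagger_{v[s]}$ does.

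The heart of the matter is an elementary identity, valid for finite partial injections, relating fixed points of powers to closed orbits: for any finite partial injection $\sigma$ and any prime $p$,
$$
    \fix(\sigma^p)=\fix(\sigma)\cup\bigcup\set{O\in\calO^c_\sigma}{\left|O\right|=p},
$$
and this union is disjoint. Indeed, if $e\in\fix(\sigma^p)$, then $\sigma^p(e)=e$ is defined, so $e,\sigma(e),\dots,\sigma^{p-1}(e)$ are all defined and lie in $\dom(\sigma)\cap\ran(\sigma)$, and the set they form is closed under $\sigma$ and $\sigma^{-1}$; thus it is a closed orbit of $\sigma$ whose length divides $p$, hence is $1$ (that is, $e\in\fix(\sigma)$) or $p$. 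The reverse inclusion is immediate. Counting elements gives
$$
    \bigl|\set{O\in\calO^c_\sigma}{\left|O\right|=p}\bigr|=\frac{\left|\fix(\sigma^p)\right|-\left|\fix(\sigma)\right|}{p}.
$$

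Next I would apply this with $\sigma=v[s]$ and with $\sigma=v[t]$, using the identity $v^p[u]=(v[u])^p$ valid for any finite partial injection $u$. Since $v=v^1\in E$ and, for every prime $p\le k$, also $v^p\in E$ (because $v^l\in E$ for all $0<l\le k$), and since $(t,E)\extends(s,E)$, clause~(2) of the order on $\bbZ_\calG$ yields $\fix(v[t])=\fix(v[s])$ and $\fix(v^p[t])=\fix(v^p[s])$ for every prime $p\le k$. By the displayed count, $v[s]$ and $v[t]$ then have exactly the same number of closed orbits of length $p$ for every prime $p\le k$, so $o^\dagger_{v[t]}(m)=o^\dagger_{v[s]}(m)$ for every $m$ with $p_m\le k$. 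Finally, if $p_n\le k$ then $p_m\le p_n\le k$ for all $m\le n$, whence $o^\dagger_{v[t]}\restr(n+1)=o^\dagger_{v[s]}\restr(n+1)=r\restr(n+1)$; that is, $o^\dagger_{v[t]}$ codes $r$ up to $n$. This shows $(t,E)\in\bbZ^\dagger_\calG(r)$.

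The one slightly delicate point I anticipate is the orbit identity for partial rather than total injections: one has to check that all the iterates $\sigma^i(e)$ are genuinely defined and that $\{e,\sigma(e),\dots,\sigma^{p-1}(e)\}$ really lies inside $\dom(\sigma)\cap\ran(\sigma)$, so that it qualifies as a closed orbit. This is exactly the bookkeeping already implicit in Remark~\ref{LEM_Stable_Orbit_Function_Well_Defined} (where closing a new orbit of length $n$ forces a new fixed point of $x^n$), here localized to the partial injections $v[s]$ and $v[t]$ in place of the generic real.
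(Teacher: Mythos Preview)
Your proof is correct and uses essentially the same idea as the paper: a closed orbit of $v[t]$ of prime length $p$ consists of fixed points of $v^{p}[t]$, and since $v^{p}\in E$ the extension relation forbids new such fixed points. The paper phrases this as a two-line contradiction (a new closed orbit of size $p_n$ in $v[t]$ yields elements of $\fix(v^{p_n}[t])\setminus\fix(v^{p_n}[s])$), whereas you give the direct version via the explicit count $\lvert\{O\in\calO^c_\sigma:\lvert O\rvert=p\}\rvert=(\lvert\fix(\sigma^p)\rvert-\lvert\fix(\sigma)\rvert)/p$; both rest on the same observation.
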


\begin{proof}
    If not, there are $v \in W_\calG^\dagger$, $n < \omega$ and $O \in \mathcal{O}^c_t \setminus \mathcal{O}^c_s$ such that $v^{p_n} \in E$ and $\left|O\right| = p_n$.
    But then, for every $k \in O$ we have $k \in \fix(v^{p_n}[t]) \setminus \fix(v^{p_n}[s])$, contradicting $(t, E) \extends (s, E)$.
\end{proof}

\begin{cor}\label{COR_Dom_Ran_Extension_Stable_Coding}
    Let $\calG$ be a cofinitary group and $(s,E) \in \bbZ^\dagger_\calG(r)$.
    Then we have
    \begin{enumerate}
        \item if $n \notin \dom(s)$, then for almost all $m < \omega$ we have $(s \cup \simpleset{(n,m)}, E) \extends (s,E)$ as well as $(s \cup \simpleset{(n,m)}, E) \in \bbZ^\dagger_\calG(r)$,
        \item if $m \notin \ran(s)$, then for almost all $n < \omega$ we have $(s \cup \simpleset{(n,m)}, E) \extends (s,E)$ as well as $(s \cup \simpleset{(n,m)}, E) \in \bbZ^\dagger_\calG(r)$,
    \end{enumerate}
\end{cor}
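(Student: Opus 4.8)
The plan is to reduce everything to the analogous statement for $\bbZ_\calG$ already established in Lemma~\ref{LEM_Dom_Ran_Extension}, and then transfer membership into $\bbZ^\dagger_\calG(r)$ by a single application of Proposition~\ref{PROP_Simple_Extensions}; this mirrors exactly how Lemma~\ref{LEM_Dom_Ran_Extension_Coding} was deduced from Lemma~\ref{LEM_Dom_Ran_Extension} in the previous subsection. Concretely, for item (1) fix $n \notin \dom(s)$. Since $\bbZ^\dagger_\calG(r) \subseteq \bbZ_\calG$, Lemma~\ref{LEM_Dom_Ran_Extension}(1) applies to $(s,E)$ and produces a cofinite set of $m < \om$ for which $(s \cup \simpleset{(n,m)}, E) \extends (s,E)$ holds in $\bbZ_\calG$; in particular, for each such $m$ the pair $(s \cup \simpleset{(n,m)}, E)$ is a genuine condition of $\bbZ_\calG$, so $s \cup \simpleset{(n,m)}$ is a finite partial injection and $\fix(w[s \cup \simpleset{(n,m)}]) = \fix(w[s])$ for every $w \in E$. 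Item (2) is handled identically, using Lemma~\ref{LEM_Dom_Ran_Extension}(2) and a cofinite set of $n$'s.

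For each $m$ in that cofinite set I would then apply Proposition~\ref{PROP_Simple_Extensions} with $(t,E) := (s \cup \simpleset{(n,m)}, E)$: both of its hypotheses, namely $(s,E) \in \bbZ^\dagger_\calG(r)$ and $(t,E) \extends (s,E)$ in $\bbZ_\calG$, have just been secured, so we conclude $(s \cup \simpleset{(n,m)}, E) \in \bbZ^\dagger_\calG(r)$. Thus the \emph{same} cofinite set of $m$'s witnesses both clauses of item (1), and symmetrically for item (2). No separate check of the structural requirements on $E$ (closure under cyclic permutations and inverses in $W^*_\calG$, and $v^l \in E$ for all $0 < l \leq k$ whenever $v^k \in E$ with $v \in W^\dagger_\calG$) is needed, since the second coordinate is unchanged; the only non-formal clause — that $o^\dagger_{v[s']}$ still codes $r$ up to the relevant $n$ — is precisely what Proposition~\ref{PROP_Simple_Extensions} preserves, because closing a fresh orbit of prime length $p_n$ in $s'$ would create a new fixed point of $v^{p_n}[s']$, contradicting $\fix(v^{p_n}[s']) = \fix(v^{p_n}[s])$.

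I do not expect any genuine obstacle: all of the real content has already been isolated in Lemma~\ref{LEM_Dom_Ran_Extension} and Proposition~\ref{PROP_Simple_Extensions}, and this corollary is simply their conjunction. The only point worth stating with a little care is that ``for almost all $m$'' is not degraded in the passage from $\bbZ_\calG$ to $\bbZ^\dagger_\calG(r)$ — but since Proposition~\ref{PROP_Simple_Extensions} applies unconditionally to every $\bbZ_\calG$-extension of a condition in $\bbZ^\dagger_\calG(r)$, there is only the single cofinite set coming from Lemma~\ref{LEM_Dom_Ran_Extension} to track, and nothing further is lost.
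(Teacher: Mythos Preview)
Your proposal is correct and follows exactly the paper's approach: the paper simply states that the corollary follows immediately from Proposition~\ref{PROP_Simple_Extensions} and Lemma~\ref{LEM_Dom_Ran_Extension}. Your elaboration of why no cofinite set is lost in the passage and why the coding clause is preserved is accurate, but the paper leaves all of this implicit in a one-line reference.
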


\begin{cor} \label{LEM_Tree_Extension_Stable_Coding}
    Let $\calG$ be a cofinitary group.
    Let $T \in \calI_i(\calG)^+$, $t \in T$ and $(s, E) \in \bbZ^\dagger_\calG(r)$.
    Then, there is $(s', E) \in \bbZ^\dagger_\calG(r)$, $t' \in T$ and $k \in \dom(t') \setminus \dom(t)$ such that $(s', E) \extends (s, E)$, $t \subseteq t'$ and $t'(k) = s'(k)$.
\end{cor}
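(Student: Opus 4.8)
The plan is to obtain the required extension from the uncoded tree-extension lemma, Lemma~\ref{LEM_Tree_Extension}, and then transport the resulting condition back into $\bbZ^\dagger_\calG(r)$ via Proposition~\ref{PROP_Simple_Extensions}. Since $\bbZ^\dagger_\calG(r) \subseteq \bbZ_\calG$, the given $(s,E)$ is in particular an element of $\bbZ_\calG$, so I would apply Lemma~\ref{LEM_Tree_Extension} to $T$, $t$ and $(s,E)$. This yields $(s',E) \in \bbZ_\calG$, $t' \in T$ and $k \in \dom(t') \setminus \dom(t)$ with $(s',E) \extends (s,E)$ in $\bbZ_\calG$, $t \subseteq t'$ and $t'(k) = s'(k)$; the point to record is that the word component $E$ is left unchanged by that lemma.

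Next I would invoke Proposition~\ref{PROP_Simple_Extensions}: because $(s,E) \in \bbZ^\dagger_\calG(r)$ and $(s',E) \extends (s,E)$ in $\bbZ_\calG$, it follows that $(s',E) \in \bbZ^\dagger_\calG(r)$. As the order on $\bbZ^\dagger_\calG(r)$ is by definition the restriction of the order on $\bbZ_\calG$, the relation $(s',E) \extends (s,E)$ continues to hold in $\bbZ^\dagger_\calG(r)$, and $(s',E)$, $t'$, $k$ are exactly as claimed. That is the entire argument.

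I do not expect a genuine obstacle: all the real work is already packaged in Lemma~\ref{LEM_Tree_Extension} — which combinatorially produces the extension of $s$ together with a node of $T$ hitting it — and in Proposition~\ref{PROP_Simple_Extensions}, which records that an $\extends$-extension keeping $E$ fixed can never violate the coding requirement, since closing a new orbit of prime length $p_n$ with $v^{p_n} \in E$ would produce a new fixed point of $v^{p_n}$. The one point worth a sentence of care is that one must apply the \emph{uncoded} lemma (Lemma~\ref{LEM_Tree_Extension}) rather than its $\bbZ_\calG(r)$-variant (Lemma~\ref{LEM_Tree_Extension_Coding}): conditions of $\bbZ^\dagger_\calG(r)$ impose no niceness constraint on the function part $s$ itself, so there is nothing extra to verify about $s'$ beyond membership in $\bbZ_\calG$ — the coding is tracked solely through the interplay of $E$ with the closed orbits of $s$. (Corollary~\ref{COR_Dom_Ran_Extension_Stable_Coding} is proved in the same fashion, combining Lemma~\ref{LEM_Dom_Ran_Extension} with Proposition~\ref{PROP_Simple_Extensions}.)
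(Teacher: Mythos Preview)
Your argument is correct and matches the paper's own proof exactly: apply Lemma~\ref{LEM_Tree_Extension} to obtain $(s',E)\extends(s,E)$ in $\bbZ_\calG$ with the required $t'$ and $k$, then invoke Proposition~\ref{PROP_Simple_Extensions} to conclude $(s',E)\in\bbZ^\dagger_\calG(r)$. Your remark that one should use the uncoded Lemma~\ref{LEM_Tree_Extension} rather than Lemma~\ref{LEM_Tree_Extension_Coding} is well taken, and your parenthetical about Corollary~\ref{COR_Dom_Ran_Extension_Stable_Coding} is exactly how the paper handles it as well.
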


\begin{proof}
    Follows immediately from Proposition~\ref{PROP_Simple_Extensions} and Lemma~\ref{LEM_Dom_Ran_Extension} or Lemma~\ref{LEM_Tree_Extension}, respectively.
\end{proof}

Hence, we only have to verify, that densely we may add any nice word to the second component of every condition in $\bbZ^\dagger_\calG(r)$.
To this end, we verify that we can find extensions which add exactly one orbit to some indecomposable word.

\begin{lem} \label{LEM_Strong_Closure_Of_Orbits}
    Let $\calG$ be a cofinitary group.
    Let $(s, E) \in \bbZ^\dagger_\calG(r)$, $v \in W_\calG^\dagger$ and $k < \omega$ such that $v^k \notin E$.
    Then there is $(t, E) \in \bbZ^\dagger_\calG(r)$ such that $(t, E) \extends (s,E)$ and $\mathcal{O}^c_{v[t]} = \mathcal{O}^c_{v[s]} \cup \simpleset{O}$ for some $O \in \mathcal{O}^c_{v[t]} \setminus \mathcal{O}^c_{v[s]}$ with $\left|O\right| = k$.
\end{lem}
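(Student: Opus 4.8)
The goal is to close a single new orbit of $v[t]$ of length exactly $k$ while respecting $(t,E)\in\bbZ^\dagger_\calG(r)$. The difficulty compared to Lemma~\ref{LEM_Generic_Closing_Of_Orbits} is twofold: first, we need to control the orbit structure of $v[t]$ (a word-evaluation), not just of the bare partial injection $t$; second, adding the new pairs to $t$ must not secretly close extra orbits of $v^l[s]$ for the various $v^l\in E$ (this is what guarantees $(t,E)\extends(s,E)$, and via Proposition~\ref{PROP_Simple_Extensions} that $(t,E)$ stays in $\bbZ^\dagger_\calG(r)$).

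\medskip

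\textbf{Step 1: Reduce to a concrete picture of a $v[s]$-orbit.} By Corollary~\ref{COR_Dom_Ran_Extension_Stable_Coding} and Proposition~\ref{PROP_Simple_Extensions}, without loss of generality extend $(s,E)$ so that some chosen "seed" $n_0$ already lies in $\dom(v[s])\cup\ran(v[s])$ and its $v[s]$-orbit $O_0\in\calO^o_{v[s]}$ is open; let $n_-$ be its unique point not in $\ran(v[s])$ and $n_+$ its unique point not in $\dom(v[s])$. (If $v=x^m$, we are essentially in the situation of Lemma~\ref{LEM_Generic_Closing_Of_Orbits} already.) Write $v = g_l x^{k_l}\cdots g_0 x^{k_0}$ in nice form. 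The idea is to build a long "free" cycle of $t$ that, when $v$ is applied, unfolds into an orbit of $v[t]$ of the prescribed length: we will chain together fresh blocks, each of which $v$ "runs through" once, returning to $n_-$ after exactly the right number of $v$-steps.

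\medskip

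\textbf{Step 2: Choose parameters and fresh points.} Set $L := \max\set{|w|}{w\in E}$ and $K := |O_0| + L\cdot(\text{something})$; for $k>K$ pick $L'$ and a block $A = \{a_0,\dots,a_{M-1}\}$ of pairwise distinct naturals, with $M$ chosen so that threading $v$ through the augmented structure produces an orbit of length exactly $k$, and with the genericity conditions
\begin{enumerate}
\item $a_i\notin\dom(s)\cup\ran(s)$,
\item for all $g\in E\restr\calG\setminus\simpleset{\id}$, $g(a_i)\notin\dom(s)\cup\ran(s)\cup A$,
\end{enumerate}
exactly as in Lemma~\ref{LEM_Generic_Closing_Of_Orbits}; (2) is achievable since each such $g$ has finitely many fixpoints. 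Then define $t$ by adding to $s$ a single cycle running $n_+\to a_0\to a_1\to\cdots\to a_{M-1}\to n_-$, so $(t,E)$ is a candidate condition and $O_t(n_0)\in\calO^c_t$.

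\medskip

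\textbf{Step 3: Verify $(t,E)\extends(s,E)$, i.e.\ no new fixpoints of any $w\in E$.} This is the heart of the argument and mirrors the contradiction analysis at the end of Lemma~\ref{LEM_Generic_Closing_Of_Orbits}: suppose $d\in\fix(w[t])\setminus\fix(w[s])$ for some $w\in E$. Using conditions (1)--(2), show that $d\notin A$ (since the last letter applied before returning to $d$ lands outside $\dom(s)\cup\ran(s)\cup A$ unless it is a block of $x$'s), hence $d\in\dom(s)\cup\ran(s)$; then, since $|w|\le L$ is too short to traverse the long fresh cycle and come back, the whole computation of $w[t](d)$ either stays inside $\dom(s)\cup\ran(s)$ (giving $w[t](d)=w[s](d)$, contradiction) or, the first time it leaves, it must enter $A$ via a block of $x$'s at $n_+$ and then get stuck: the next non-$x$ letter $g$ sends the point outside $\dom(s)\cup\ran(s)\cup A$ by (2), so either $w[t](d)$ is undefined or the computation cannot return to $d\in\dom(s)\cup\ran(s)$ within $|w|$ steps. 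A short case split on whether the letter after the $x$-block is in $\calG$ or is another $x$-block (as in the original proof) closes all cases. One also checks separately, using $k>L\ge|w|$, that words of the form $x^{k_0}$ in $E$ gain no fixpoint. I expect Step~3 to be the main obstacle, since the presence of general $v$ (not just $x^m$) means the "long cycle" of $t$ interacts with $v$ in a more intricate way, and one must be careful that the new orbit of $v[t]$ has length exactly $k$ and not an unintended divisor or multiple — this is where the choice of $M$ in Step~2 must be calibrated, using the nice form of $v$ and the fact that the $a_i$ are generic relative to $E\restr\calG$.

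\medskip

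\textbf{Step 4: Conclude membership in $\bbZ^\dagger_\calG(r)$.} Once $(t,E)\extends(s,E)$ is established, Proposition~\ref{PROP_Simple_Extensions} gives $(t,E)\in\bbZ^\dagger_\calG(r)$ for free. Finally, by the same argument $t$ can be taken to close no orbit of $v[s]$ other than $O_0$ and no orbit of $v^l[s]$ for $v^l\in E$ other than the one forced, so $\calO^c_{v[t]} = \calO^c_{v[s]}\cup\{O\}$ with $|O|=k$, as required. This proves the lemma.
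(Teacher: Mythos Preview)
Your outline has a genuine gap at the very point you flag as ``the main obstacle'': the construction in Step~2 does not do what you claim. Adding to $s$ a simple $t$-chain $n_+\to a_0\to\cdots\to a_{M-1}\to n_-$ controls orbits of \emph{$t$}, not of $v[t]$. For a general nice word $v=g_l x^{k_l}\cdots g_0 x^{k_0}$, when you try to compute $v[t]$ at a point on this chain, the first $x$-block walks along the chain, but then $g_0$ throws you \emph{off} the chain (by your own genericity condition~(2)!), and the computation is undefined. So there is no reason the new closed orbit of $v[t]$ has length $k$, or even exists. You are transplanting the argument of Lemma~\ref{LEM_Generic_Closing_Of_Orbits} (which is about orbits of $x$) to orbits of $v[t]$ without saying how to bridge this gap, and the bridge is the entire content of the lemma.

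The paper's construction is different in two essential ways. First, it builds a \emph{fresh} orbit of $v[t]$ from scratch (not closing an existing open one), by laying down $k\cdot|v|$ points $a_0,\dots,a_{k|v|-1}$ that trace the computation of $v^k$ letter by letter: when the $i$-th letter of $v^k$ is some $g\in\calG$, set $a_{i+1}:=g(a_i)$ and add nothing to $t$; when it is $x^{\pm1}$, pick $a_{i+1}$ fresh and add the corresponding pair to $t$. This guarantees $O=\{a_{j|v|}:j<k\}$ is a closed $v[t]$-orbit of length exactly $k$, for \emph{every} $k$ (your restriction $k>K$ is not in the statement and is not needed). Second, the two hypotheses you never invoke --- $v^k\notin E$ and $v\in W^\dagger_\calG$ --- are exactly what close the argument: a putative new fixpoint of some $w\in E$ would force the computation of $w$ to run along the $a_i$-path, and the three-case local structure of that path forces $w$ to be a power of a cyclic permutation of $v$, contradicting $v^k\notin E$ via the closure properties of $E$; and indecomposability of $v$ is what rules out a \emph{second} new closed $v[t]$-orbit through some $a_i\notin O$ (otherwise $v$ would equal a nontrivial cyclic rotation of itself or its inverse). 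Your Step~4 (``by the same argument'') hides precisely this use of indecomposability.
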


\begin{proof}
    Let $F := E \cup \simpleset{v}$.
    Choose $N < \omega$ such that
    \begin{enumerate}
        \item for all $g \in F \restr \calG$ we have $g[\dom(s) \cup \ran(s)] \subseteq N$,
        \item for all $g_0 \neq g_1 \in F \restr \calG$ and $n > N$ we have $g_0(n) \neq g_1(n)$.
    \end{enumerate}
    In particular, as $\id \in F \restr \calG$, we have
    \begin{enumerate}
        \item $\dom(s) \cup \ran(s) \subseteq N$,
        \item for all $g \in F \restr \calG \setminus \simpleset{\id}$ we have $\fix(g) \subseteq N$.
    \end{enumerate}
    Inductively, we define a sequence of pairwise different natural numbers $A := \set{a_i}{i < k \cdot\left|v\right|}$ and a sequence of finite partial injections $\seq{t_i}{0 < i \leq k \cdot\left|v\right|}$ as follows.
    Write $v^k = v_{k \cdot\left|v\right| - 1} \dots v_0$, choose $a_1 > N$ arbitrarily and set $t_1 := s$.
    Now, assume $a_i$ and $t_{i}$ are defined for some $0 < i < k \cdot\left|v\right| - 1$.
    If $v_i \in \calG$ set $a_{i + 1} := v_i(a_i)$ and $t_{i + 1} := t_{i}$.
    Otherwise, $v_i = x$ or $v_i = x^{-1}$.
    Choose, $a_{i + 1} > N$ such that for all $g \in F \restr \calG$ we have $g(a_{i + 1}) \notin\simpleset{a_1, \dots, a_{i}}$.
    Further, if $v_i = x$ set $t_{i + 1} := t_{i} \cup \simpleset{(a_i, a_{i + 1})}$ and if $v_i = x^{-1}$ set $t_{i + 1} := t_{i} \cup \simpleset{(a_{i + 1}, a_{i})}$.
    
    Finally, assume $a_{k \cdot\left|v\right| - 1}$ and $t_{k \cdot\left|v\right| - 1}$ are defined.
    In case $v_{k \cdot\left|v\right| - 1} \in \calG$, set $a_{0} :=  v_{k \cdot\left|v\right| - 1}(a_{k \cdot\left|v\right| - 1})$ and define $t_{k \cdot\left|v\right|} := t_{k \cdot\left|v\right| - 1} \cup \simpleset{(a_0, a_1)}$.
    Otherwise, $v_{k \cdot\left|v\right| - 1} = x$ (so that $v = x$, because $v \in W^\dagger_\calG$).
    Then, we may choose $a_{0} > N$ such that for all $g \in F \restr \calG$ we have $g(a_{0}) \notin\simpleset{a_1, \dots, a_{k \cdot\left|v\right| - 1}}$ and we set $t_{k \cdot\left|v\right|} := t_{k \cdot\left|v\right| - 1} \cup \simpleset{(a_{k \cdot\left|v\right| - 1}, a_0), (a_{0}, a_{1})}$.

    By construction, we have that all $a_i$ are pairwise distinct, $O := \set{a_{i \cdot \left|v\right|}}{i < k} \in \mathcal{O}^c_{v[t]} \setminus \mathcal{O}^c_{v[s]}$ and $\left|O\right| = k$.
    Let $t := t_{k \cdot \left|v\right|}$.
    Note, that for every $i < k \cdot\left|v\right|$ exactly one of the following cases is satisfied:
    \begin{enumerate}
        \item $a_i \in \dom(t) \cap \ran(t)$ and for every $g \in F \restr \calG \setminus \simpleset{\id}$ we have that $g(a_i) \notin \dom(t) \cup \ran(t)$ and $ \simpleset{t(a_i), t^{-1}(a_i)} = \simpleset{a_{i - 1}, a_{i + 1}}$,
        \item $a_i \in \dom(t) \setminus \ran(t)$ and there is a unique $g \in F \restr \calG \setminus \simpleset{\id}$ such that $g(a_i) \in \dom(t) \cup \ran(t)$ and $ \simpleset{t(a_i), g(a_i)} = \simpleset{a_{i - 1}, a_{i + 1}}$,
        \item $a_i \in \ran(t) \setminus \dom(t)$ and there is a unique $g \in F \restr \calG \setminus \simpleset{\id}$ such that $g(a_i) \in \dom(t) \cup \ran(t)$ and $ \simpleset{t^{-1}(a_i), g(a_i)} = \simpleset{a_{i - 1}, a_{i + 1}}$,
    \end{enumerate}
    where $a_{-1} := a_{k \cdot\left|v\right| - 1}$ and $a_{k \cdot\left|v\right|} := a_0$.
    Finally, it remains to show that $(t, E) \extends (s,E)$ and $\mathcal{O}^c_{v[t]} = \mathcal{O}^c_{v[s]} \cup \simpleset{O}$.
    
    So, let $w \in E$ and assume $d \in \fix(w[t]) \setminus \fix(w[s])$.
    By choice of $N$ we have $d \in A$ and the entire computation of $d$ along $v[t]$ is in $A$.
    So choose $i < k \cdot\left|v\right|$ with $d = a_i$ and let $w_0$ be the rightmost letter of $w$.
    By the three properties above and the fact that the entire computation of $d$ along $w[t]$ is defined, we obtain $w_0[t](a_i) = a_{i + 1}$ or $w_0[t](a_i) = a_{i - 1}$.
    By considering the word $w^{-1}$ we may restrict to the first case.
    Write $v = v_1 v_0$ for $v_0,v_1 \in W_\calG \setminus \simpleset{\id}$ such that $v_0(a_0) = a_i$.
    Inductively, using the three properties above, there is only one computation using $F \restr \calG \setminus \simpleset{\id}$ and $t, t^{-1}$ starting at $a_i$ and proceeding to $a_{i + 1}$.
    As $d$ is a fixpoint of $w[t]$, the length of $w$ is a multiple of $\left|A\right| = k \cdot\left|v\right|$.
    But then, by uniqueness of the computation we have $w = ((v_0v_1)^k)^l$ for some $l > 0$.
    But $v^k \notin E$, so by the closure of $E$ (remember Definition~\ref{DEF_Zhang_Coding_In_Every_Word_Forcing}) we get $w \notin E$, a contradiction.

    Finally, we have to show that we only added exactly one orbit to $\mathcal{O}^c_{v[s]}$.
    So let $a_i \in A \setminus O$ and assume $a_i \in P \in \mathcal{O}^c_{v[t]}$.
    Again, there are only two possible computations using $F \restr \calG \setminus \simpleset{\id}$ and $t, t^{-1}$ starting with $a_i$; one proceeding with $a_{i + 1}$ and the other with $a_{i-1}$.
    As before, by uniqueness we obtain the following two cases.
    Either, there are $v_0, v_1 \in W_\calG \setminus \simpleset{\id}$ with $v = v_1v_0$ and $v = v_0v_1$,
    or there are $v_0, v_1 \in W_\calG \setminus \simpleset{\id}$ with $v = v_1v_0$ and $v = v_0^{-1}v_1^{-1}$.
    It is easy to verify, that the first case contradicts that $v$ is indecomposable and the second case contradicts that $v$ is reduced, i.e. $v$ allows no cancellations.
\end{proof}

\begin{cor} \label{LEM_Adding_Words_Is_Dense}
    Let $\calG$ be a cofinitary group and $w \in W^*_\calG$.
    Then the set of all $(s, E)$ with $w \in E$ is dense in $\bbZ^\dagger_\calG(r)$.
\end{cor}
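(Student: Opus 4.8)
The plan is to start from an arbitrary condition $(s, E) \in \bbZ^\dagger_\calG(r)$ and produce an extension $(t, F) \extends (s, E)$ in $\bbZ^\dagger_\calG(r)$ with $w \in F$, using Lemma~\ref{LEM_Strong_Closure_Of_Orbits} finitely many times to repair the coding. First I would fix the target word set: let $F \subseteq W^*_\calG$ be the smallest set containing $E \cup \simpleset{w}$ which is closed under cyclic permutations, inverses, and the lower-power operation (if $v^k \in F$ for $v \in W^\dagger_\calG$ and $k \geq 1$, then $v^l \in F$ for all $0 < l \leq k$). Since each of these operations preserves word length and the finite set of group letters used, $F$ is finite; it contains $E$ and $w$ and satisfies the two closure clauses of Definition~\ref{DEF_Zhang_Coding_In_Every_Word_Forcing}. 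Hence the only thing preventing $(s, F)$ from lying in $\bbZ^\dagger_\calG(r)$ is the coding clause: for every indecomposable root $v$ of a word in $F$, setting $k_v := \max \set{k \geq 1}{v^k \in F}$ and letting $M_v$ be the largest $n$ with $p_n \leq k_v$, we need $o^\dagger_{v[t]}$ to code $r$ up to $M_v$.

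Next I would enumerate the finitely many pairs $(v, n)$ with $v$ an indecomposable root occurring in $F$ and $n \leq M_v$ --- the \emph{coding obligations} --- in an order where, for each fixed $v$, the obligations $(v, n)$ come with $n$ increasing. I then process them one by one, maintaining conditions $(s_j, E_j) \in \bbZ^\dagger_\calG(r)$ with $(s_0, E_0) = (s, E)$, $E \subseteq E_j \subseteq F$, and $(s_{j+1}, E_{j+1}) \extends (s_j, E_j)$. At an obligation $(v, n)$: if $o^\dagger_{v[s_j]}(n) = r(n)$ I leave $s_{j+1} := s_j$; otherwise I first observe that $v^{p_n} \notin E_j$ must hold (for if $v^{p_n} \in E_j$, then $(s_j, E_j) \in \bbZ^\dagger_\calG(r)$ already forces $o^\dagger_{v[s_j]}(n) = r(n)$ via Definition~\ref{DEF_Zhang_Coding_In_Every_Word_Forcing}), so Lemma~\ref{LEM_Strong_Closure_Of_Orbits} applies with $v$ and $k := p_n$, yielding $(s_{j+1}, E_j) \in \bbZ^\dagger_\calG(r)$ with $(s_{j+1}, E_j) \extends (s_j, E_j)$ which adds exactly one closed orbit of length $p_n$ to $v[\cdot]$ and no orbit of any other length; hence the parity of the number of length-$p_n$ orbits of $v[\cdot]$ flips to $r(n)$ while $o^\dagger_{v[\cdot]}$ is unchanged at every other level. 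In either case I then enlarge the word set, taking $E_{j+1}$ to be the cyclic-permutation/inverse closure of $E_j \cup \set{v^l}{0 < l \leq p_n}$, which still lies inside $F$ because $p_n \leq k_v$ forces $v^{p_n} \in F$.

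The point of adding $v^{p_n}$ to the word set is to lock in the value just arranged: any condition below $(s_{j+1}, E_{j+1})$ preserves $\fix(v[\cdot])$ and $\fix(v^{p_n}[\cdot])$, and since $p_n$ is prime the number of length-$p_n$ closed orbits of $v[\cdot]$ equals $(\left|\fix(v^{p_n}[\cdot])\right| - \left|\fix(v[\cdot])\right|)/p_n$, so that number, and thus $o^\dagger_{v[\cdot]}(n)$, can no longer change. To see $(s_{j+1}, E_{j+1}) \in \bbZ^\dagger_\calG(r)$ I would check the coding clause for the new words $v^l$ ($0 < l \leq p_n$) and their cyclic permutations and inverses: by Remark~\ref{REM_Orbits_Under_Inverse} and Corollary~\ref{COR_Orbit_Function_Under_Cyclic_Permuations} these all have the same orbit function as $v$, so it is enough that $o^\dagger_{v[s_{j+1}]}$ codes $r$ up to $n$. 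The value at $n$ was just arranged, and for $m < n$ either $v^{p_m} \in E$ (so also $v \in E$, and the value at $m$, correct in $(s, E)$, is preserved all the way down) or $(v, m)$ was an earlier obligation, after which $v^{p_m}$ was put into the word set and hence locked --- this is exactly what the within-root ordering buys us. After all obligations are processed we reach $(s_M, E_M) \in \bbZ^\dagger_\calG(r)$ with $E_M \subseteq F$ and $(s_M, E_M) \extends (s, E)$ such that $o^\dagger_{v[s_M]}$ codes $r$ up to $M_v$ for every root $v$ of a word in $F$; therefore $(s_M, F) \in \bbZ^\dagger_\calG(r)$, and since $E_M \subseteq F$ we conclude $(s_M, F) \extends (s, E)$ with $w \in F$, as required.

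The main obstacle is the interference between distinct obligations: closing an orbit to repair the coding of one word can create new closed orbits of a different word and so undo an obligation already handled. This is why the word set has to be grown as the construction proceeds --- pinning $\fix(v^{p_n}[\cdot])$ the moment level $n$ of $v$ is corrected --- and why the coding must be re-read at each step rather than assumed intact; the special linkage among the cyclic permutations and inverses of a word, which all must be corrected in unison, takes care of itself, since repairing $v$ simultaneously repairs all of them, so that when such a word later comes up its obligation is already met and is skipped.
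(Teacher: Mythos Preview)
Your proof is correct and rests on the same three ingredients as the paper: Lemma~\ref{LEM_Strong_Closure_Of_Orbits} to flip one parity at a time, immediately enlarging the word set by $v^{p_n}$ to lock that level, and the invariance of $o^\dagger$ under cyclic permutations and inverses (Remark~\ref{REM_Orbits_Under_Inverse}, Corollary~\ref{COR_Orbit_Function_Under_Cyclic_Permuations}) to handle all equivalent roots at once. The paper streamlines the bookkeeping by an induction on $k$ in $w=v^k$: assuming inductively that $v^{k-1}\in E$, there is at most one outstanding obligation (namely level $n$ when $k=p_n$ is prime; none if $k$ is composite or $k=1$), so a single application of the lemma already suffices---your single-pass processing of all obligations is equivalent but more elaborate, and in fact all the roots you enumerate are cyclic permutations or inverses of one another, so after the first one is repaired the rest are automatically skipped.
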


\begin{proof}
    Let $(s, E) \in \bbZ^\dagger_\calG(r)$ and $w = v^k$ for some $k < \omega$ and $v \in W^\dagger_\calG$ with $w \notin E$.
    Let $F$ be the closure of $E \cup \simpleset{w}$ under cyclic permutations and inverses thereof in $W^*_\calG$.
    If $k = 1$, then $(s, F) \in \bbZ^\dagger_\calG(r)$ and $(s,F) \extends (s, E)$.
    If $k > 1$ by induction we may assume $v^{k - 1} \in E$.
    If $k$ is not prime, we have $(s, F) \in \bbZ^\dagger_\calG(r)$ and $(s,F) \extends (s, E)$.
    So assume $k = p_n$ for some $n < \omega$.
    If
    $$
        o^\dagger_{v[s]}(n) \not\equiv r(n) \quad \mod 2,
    $$
    use the previous Lemma~\ref{LEM_Strong_Closure_Of_Orbits} to find $(t,E) \in \bbZ_\calG(r)$ with $(t, E) \extends (s,E)$ and
    $$
        o^\dagger_{v[t]}(n) \equiv r(n) \quad \mod 2.
    $$
    Hence, $v[t]$ codes $r$ up to $n$.
    By Proposition~\ref{REM_Orbits_Under_Inverse} and Corollary~\ref{COR_Orbit_Function_Under_Cyclic_Permuations} the same is true for every cyclic permutation of $v$ and inverses thereof in $W^\dagger_\calG$.
    Thus, $(t, F) \in \bbZ^\dagger_\calG(r)$, $(t, F) \extends (s, E)$ and $w \in F$.
\end{proof}

\begin{thm}\label{THM_Coanalytic_Group}
 In the constructible universe $L$ there is a co-analytic tight cofinitary group.
%    Consistently, there is a co-analytic tight cofinitary group of size $\aleph_1$ (thus $\fra_{\text{\normalfont g}} = \aleph_1$), a $\Delta^1_3$ well-order of the reals, and  $\frc = \aleph_2$.
\end{thm}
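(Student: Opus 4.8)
The plan is to run the standard ``coding in $L$'' recursion of length $\omega_1$, using the forcing $\bbZ^\dagger_{\calG}(r)$ of Definition~\ref{DEF_Zhang_Coding_In_Every_Word_Forcing} at each step so that every newly added generator encodes, through its orbit structure, a canonical real recording the history of the construction, while Corollaries~\ref{COR_Dom_Ran_Extension_Stable_Coding}, \ref{LEM_Tree_Extension_Stable_Coding} and~\ref{LEM_Adding_Words_Is_Dense} guarantee that the group produced is tight. Working in $L$, I would fix a $\Sigma^1_2$-good wellorder $<_L$ of the reals and, using $\CH$, a $<_L$-recursive bijection $e$ from $\omega_1$ onto the set of all $\omega$-sequences of injective trees; I would also fix a recursively presented countable cofinitary group $\calG_0$ (for definiteness the trivial group) together with a fixed sufficiently large finite fragment $T_0$ of $\ZFC$.

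I would then build a continuous increasing chain $\langle\calG_\alpha:\alpha<\omega_1\rangle$ of cofinitary groups as follows. Given $\calG_\alpha$ at a successor step $\alpha+1$: let $\gamma_\alpha$ be the least ordinal with $\langle\calG_\beta,f_\beta:\beta<\alpha\rangle$ and $e\restr(\alpha+1)$ in $L_{\gamma_\alpha}$ and $L_{\gamma_\alpha}\models T_0$; let $r_\alpha$ be the $<_L$-least real coding $L_{\gamma_\alpha}$; let $N_\alpha=L_{\nu_\alpha}$ with $\nu_\alpha$ least such that $r_\alpha\in L_{\nu_\alpha}\models T_0$; and let $f_\alpha$ be the $<_L$-least real that is $\bbZ^\dagger_{\calG_\alpha}(r_\alpha)$-generic over the countable model $N_\alpha$. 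Such an $f_\alpha$ exists because $N_\alpha$ is a countable transitive model and $\bbZ^\dagger_{\calG_\alpha}(r_\alpha)$ is a nonempty $\sigma$-centered notion of forcing, so only countably many dense sets have to be met. Since $N_\alpha$ contains $\calG_\alpha$, $r_\alpha$, $e(\alpha)$ and the trees occurring in $e(\alpha)$, genericity over $N_\alpha$ yields, via Corollaries~\ref{COR_Dom_Ran_Extension_Stable_Coding}, \ref{LEM_Adding_Words_Is_Dense} and~\ref{LEM_Tree_Extension_Stable_Coding} together with Remark~\ref{REM_Zhang_Forcing}, that $f_\alpha\in S_\infty\setminus\calG_\alpha$, that $\calG_{\alpha+1}:=\langle\calG_\alpha\cup\{f_\alpha\}\rangle$ is cofinitary, that $o^\dagger_{w[f_\alpha]}=r_\alpha$ for every indecomposable word $w\in W^\dagger_{\calG_\alpha}$, and that $f_\alpha$ densely diagonalizes (Definition~\ref{DEF_Densely_Diagonalizing}) every $T\in e(\alpha)$ lying in $\calI_i(\calG_\alpha)^+$. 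At limit stages take unions. Put $\calG:=\bigcup_{\alpha<\omega_1}\calG_\alpha$; this is a cofinitary group of size $\aleph_1=\frc$.

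\emph{Tightness.} First note that positivity is preserved downward: if $T\in\calI_i(\calG)^+$ then, since $\calG_\alpha\subseteq\calG$, no restriction $T_s$ is covered by finitely many members of $\calG_\alpha$ either, so $T\in\calI_i(\calG_\alpha)^+$ for all $\alpha$. Now let $\langle T_n:n<\omega\rangle$ be any sequence with each $T_n\in\calI_i(\calG)^+$, and pick $\alpha$ with $e(\alpha)=\langle T_n:n<\omega\rangle$. Each $T_n$ is then in $\calI_i(\calG_\alpha)^+$, so by construction $f_\alpha\in\calG$ densely diagonalizes every $T_n$, witnessing the clause of Definition~\ref{DEF_Tight_Cofinitary} for this sequence. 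Hence $\calG$ is tight, and therefore a maximal cofinitary group by Remark~\ref{DEF_Tight_Implies_Maximal}. (When some $T_n\notin\calI_i(\calG_\alpha)^+$ it is covered by finitely many members of $\calG_\alpha\subseteq\calG$, so $\langle T_n:n<\omega\rangle$ is not a tightness test for $\calG$ and may be safely disregarded at stage $\alpha$.)

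\emph{Co-analyticity.} This is the step I expect to be the main obstacle, and it is carried out by the by-now standard coding-with-condensation argument, as in Gao--Zhang~\cite{SGYZ}, Kastermans~\cite{BAK1}, Fischer--Friedman~\cite{VFSF} and Fischer--Schrittesser--T\"ornquist~\cite{VFDSAT}. The recursion above is $\Sigma_1$-definable over $L$, so by condensation any transitive $M\models T_0+V{=}L$ closed under the operations involved computes an initial segment of $\langle\calG_\beta,f_\beta\rangle$ correctly; and by Corollary~\ref{COR_Restrict_To_Orbits_Of_Indecomposable_Words}, every $g\in\calG_{\alpha+1}\setminus\calG_\alpha$ almost codes $r_\alpha$, so that $r_\alpha$ is recovered, up to finitely many values, from $g$ by an arithmetic operation on the orbit structure of $g$. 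Using this one shows that
\[
g\in\calG \iff g\in\calG_0 \ \text{ or }\ \exists n<\omega\ \exists\sigma\in 2^{n}\ \psi(g,n,\sigma),
\]
where $\psi(g,n,\sigma)$ asserts: the real $r$ obtained from $g$ by overwriting the first $n$ values of its orbit-code by $\sigma$ codes a \emph{wellfounded} structure $M\models T_0+V{=}L$, that $M$ computes a legitimate initial segment $\langle\calG_\beta,f_\beta:\beta<\alpha\rangle$ of the recursion, and that $g$ belongs to the group produced by performing stage $\alpha$ of the recursion relative to $M$ and $r$. The only non-arithmetical ingredient of $\psi$ is the wellfoundedness clause, which is $\Pi^1_1$ in $r$ and hence $\Pi^1_1$ in $(g,n,\sigma)$; as a countable union of $\Pi^1_1$ sets is again $\Pi^1_1$, the right-hand side defines a $\Pi^1_1$ set. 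Conversely, if such $(n,\sigma)$ exist then, by condensation, $M$ is a genuine initial segment of $L$, its computation agrees with the real recursion, and hence $g\in\calG$. The delicate point — exactly as in the cited constructions — is to arrange the bookkeeping and the choice of the auxiliary levels $\gamma_\alpha$, $\nu_\alpha$ so that the information coded into each $g$ is \emph{precisely} what is needed for the last conjunct of $\psi$ to be absolute and expressible at the claimed complexity; here this is facilitated by the parameter-freeness of the new orbit-coding stressed in the introduction. Granting this, $\calG$ is a co-analytic tight cofinitary group, completing the proof.
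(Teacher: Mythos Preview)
Your proposal is correct and follows essentially the same route as the paper: a length-$\omega_1$ recursion in $L$ using the orbit-coding forcing $\bbZ^\dagger_{\calG}(r)$ so that every new group element almost codes the real recording the construction, tightness via the tree-extension lemma applied to trees appearing in the stage model, and co-analyticity via condensation together with Corollary~\ref{COR_Restrict_To_Orbits_Of_Indecomposable_Words}. The only organisational differences are that the paper lets the growing models $L_{\delta_\xi}$ absorb the injective trees rather than using an explicit bookkeeping bijection $e$, and that for the final $\Pi^1_1$ bound it invokes the hyperarithmetic-witness trick $(\exists y\leq_h g)\,\Phi(y,g)$ in place of your countable union over finite corrections $(n,\sigma)$.
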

\begin{proof}[Sketch of proof]
%The first part of the proof is similar to that of \cite[Theorem 4.1]{VFDSAT}, so we only give a sketch, point out the necessary changes, and otherwise refer the reader to said paper for any omitted details.
Equipped with the techniques developed in this section, we will inductively  construct a tight cofinitary group, following the general outline given in \cite[Theorem 4.1]{VFDSAT}.

We start by working in $L$. Construct a sequence $\langle \delta_\xi, z_\xi, \calG_\xi, \sigma_\xi   \mid \xi < \omega_1 \rangle$ satisfying the following:
\begin{enumerate}[(i)]
\item $\delta_\xi$ is a countable ordinal such that $L_{\delta_\xi}$ projects to $\omega$ and $\delta_\xi > \delta_\nu + \omega\cdot 2$ for each $\nu < \xi$,
\item $z_\xi \in 2^\omega$ such that the canonical surjection from $\omega$ to $L_{\delta_\xi}$ is computable from $ z_\xi$, 
\item $\calG_\xi$ is the group generated by $\{\sigma_\nu \colon \nu < \xi\}$,
\item $\sigma_\xi$ is the generic permutation over $L_{\delta_\xi}$ for Zhang's forcing with coding $z_\xi$ into orbits, over the group $\calG_\xi$.
\end{enumerate}
Such a sequence is easily constructed by induction: 
Supposing we already have constructed
$\langle \delta_\xi, z_\xi, \calG_\xi, \sigma_\xi  \mid \xi < \nu \rangle$
let 
$(\delta_\nu, z_\nu, \calG_\nu, \sigma_\nu )$ be the $\leq_L$-least triple such that the above items are satisfied.
Let
\[
\calG = \bigcup_{\xi<\omega_1} \calG_\xi.
\]
We have seen that $\calG$ is tight.
We verify that $\calG$ is co-analytic:
Fix a formula $\Psi(g)$ such that
\[
\Psi(g) \Leftrightarrow (\exists \nu < \omega_1)\; 
g = \langle \delta_\xi, z_\xi,  \calG_\xi, \sigma_\xi  \mid \xi < \nu \rangle
\]
and such that $\Phi$ is absolute for initial segments of $L$, i.e.,
\[
(\forall \alpha \in \text{ORD}) \; \big[ g \in L_\alpha \Rightarrow (\Psi(\vec g) \Leftrightarrow  L_\alpha\vDash \Psi(\vec g)) \big]
\]
A standard argument shows that 
\begin{multline*}
    g \in \calG \Leftrightarrow (\exists y \in 2^\omega) \text{ $y$ codes a well-founded model}\\
    \text{whose transitive collapse $M$ satisfies}\\
g \in M \land M \vDash (\exists \vec g) \; \Psi(\vec g) \land \vec g = \langle \delta_\xi, z_\xi, \sigma_\xi, \calG_\xi  \;\mid\; \xi < \nu + 1\rangle \land 
g \in \calG_\nu
\end{multline*}
where, crucially, the right-hand can be written $(\exists y \in 2^\omega)\; \Phi(y,g)$ with $\Phi$ a $\Pi^1_1$ formula.
We now show that
\begin{equation}\label{e.pi11}
g \in \calG \Leftrightarrow (\exists y \leq_h g)\; \Phi(y,g).
\end{equation}
It suffices to show $\Rightarrow$. But if $g \in \calG$, $g = w[\sigma_\xi]$ for some $\xi < \omega_1$, with $w$ a word in $\calG_\xi$, and so $z_\xi$ is computable in $g$. But $y$ as required is computable from $z_\xi$.
As in \cite[Theorem 4.1]{VFDSAT}, it can be shown that the right-hand side in \eqref{e.pi11} is equivalent to a $\Pi^1_1$ formula.
\end{proof}

\begin{rem}\label{RK1}
Note that, by \cite[Theorem 3.3]{VFCS} there are no analytic tight eventually different families, which implies that there are no analytic tight cofinitary groups and so the projective complexity of the group $\calG$ in the above theorem is optimal.
\end{rem}

\section{Indestructibility and strong preservation of tightness} \label{SEC_indestructibility}

In this section, we will show how to obtain a model with a co-analytic tight cofinitary group witnessing $\mathfrak{a}_g=\aleph_1$ in a model with a $\Delta^1_3$ well-order of the reals and $\mathfrak{c}=\aleph_2$. 

\subsection{Adjoining a $\Delta^1_3$ well-order via Sacks coding}  

In this subsection, we will spell out all main ingredients of the forcing poset, leading to \cite[Theorem 1]{VFSF}, which is the following statement:

\begin{thm*} There is a cardinal preserving generic extension of the constructible universe $L$ in which $\mathfrak{c}=\aleph_2$ and there is a $\Delta^1_3$ well-order of the reals. 
\end{thm*}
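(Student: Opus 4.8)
The plan is to reproduce the \emph{coding with perfect trees} construction of Fischer and S.\,D.\ Friedman from \cite{VFSF}, since — as the next subsection will make clear — this iteration is exactly the frame into which the Zhang-style coding for the cofinitary group is to be grafted. I would work in $L$, where $\GCH$ and $\diamond_{\omega_1}$ hold, and build a countable support iteration $\langle \bbP_\al, \dot{\bbQ}_\al \mid \al < \omega_2\rangle$ of proper forcings, each forced to have size $\leq\aleph_1$, whose successor stages alternate between two kinds of iterand. At the ``characteristic'' stages I would force with Sacks forcing $\bbS$ (in the full generality of \cite{VFSF} this slot absorbs whatever tame forcing adjusts the cardinal invariant one is after; for the bare statement Sacks — or simply ensuring $\aleph_2$ reals are added — suffices). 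At the intermediate ``coding'' stages I would force with a two-step iterand $\dot{\bbQ}^0_\al \ast \dot{\bbQ}^1_\al$: here $\bbQ^0_\al$ is the coding-with-perfect-trees poset, whose generic real $y_\al$ reflects, on club-many coordinates and against the fixed $\diamond_{\omega_1}$-sequence, the characteristic function of a set $A_\al\subseteq\omega_1$ encoding bits of the $\leq_L$-data of all generics produced below $\al$; and $\bbQ^1_\al$ is Jensen almost-disjoint coding relative to a fixed $L$-definable almost-disjoint family on $\omega$, so that after $\bbQ^1_\al$ the set $A_\al$, hence the relevant initial part of the construction, is decodable from a \emph{single} real with $\omega_1$ as the only parameter.

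The cardinal bookkeeping I expect to be the soft part. Each iterand is proper, so the iteration is proper and $\omega_1$ is preserved; a countable support iteration of proper forcings of size $\leq\aleph_1$ over a model of $\GCH$ is $\aleph_2$-c.c.\ and of size $\aleph_2$ (this is also where one sees that $\CH$ persists at every intermediate stage, licensing the size bound on the iterands), so $\omega_2$ and all larger cardinals are preserved, and since $\aleph_2$-many reals are added, $\frc=\aleph_2$ in the extension $L[G]$. I would then define the well-order $<_{wo}$ on the reals of $L[G]$ in the familiar Solovay/David manner: $x <_{wo} y$ iff there is a countable transitive model $M$ of a large enough finite fragment of $\ZFC$ with $x,y\in M$, with $M$ computing $\omega_1$ correctly, such that $M$ decodes from the coded reals it contains an approximation of the construction sequence in which $x$ is enumerated strictly before $y$. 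The whole point of the design of $\bbQ^0_\al$ and $\bbQ^1_\al$ is that this $\Sigma^1_3$ statement is \emph{correct} — every such certifying $M$ reads off the true order, and for every pair of reals of $L[G]$ such an $M$ exists — and, granting correctness, the complement of $<_{wo}$ admits a definition of the same shape, so $<_{wo}$ is $\Delta^1_3$, and light-face since the only parameter used, $\omega_1$, is definable.

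The hard part, and the technical heart of \cite{VFSF}, is precisely this correctness, i.e.\ the \emph{stability} of the codes: one must show that the information coded at stage $\al$ is not destroyed by any later stage of the iteration — Sacks-indestructibility of coding with perfect trees, and indestructibility of almost-disjoint coding, under the remaining tail of the iteration — and, dually, that the coding is ``honest'', so that no spurious countable model can certify a false instance of $<_{wo}$. This is where the perfect-tree machinery, a fusion argument, and the careful choice of the almost-disjoint family do the real work, while the properness and chain-condition checks are routine. For the present paper only the \emph{shape} of this construction is needed, since in the next subsection the forcings $\bbZ_\calG(r)$ and $\bbZ^\dagger_\calG(r)$ of Section~\ref{SEC_co-analytic} will be interleaved into the ``characteristic'' slots and Theorem~\ref{main_two} will be invoked to show that Sacks coding preserves the tightness of the co-analytic cofinitary group from Theorem~\ref{THM_Coanalytic_Group}.
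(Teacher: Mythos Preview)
The paper does not prove this theorem; it is quoted as \cite[Theorem~1]{VFSF} and the subsection merely \emph{describes the ingredients} of the Fischer--Friedman iteration so that they can be referred to in the proof of Theorem~\ref{main_two}. Your proposal, however, describes a \emph{different} construction from the one the paper actually uses, and this matters for what follows.

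Concretely, the iterands in \cite{VFSF} (as recounted in the paper) are of three kinds: the Localisation posets $\mathcal{L}(\varphi)$, the club-shooting posets $Q(S^*)$, and the Sacks-coding posets $C(Y)$ of Definition~\ref{DFN_Sacks_Coding}. The first two add no reals; only $C(Y)$ does. The iteration is $S$-proper rather than proper, precisely because $Q(S^*)$ is only $(\omega_1\setminus S^*)$-proper. There is no Jensen almost-disjoint coding step and no separate ``plain Sacks'' stage; the coding is accomplished by the model-theoretic mechanism built into $C(Y)$ (a branch $r$ codes $Y$ below $i$ via whether $L_{\mu_j}[Y\cap j,r]\vDash\ZF^-$), together with the localisation and club-shooting steps that make the $\Sigma^1_3$ definition of the well-order go through. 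Your two-step ``perfect-tree coding $\ast$ almost-disjoint coding'' picture is closer to other coding constructions in the literature and is not what \cite{VFSF} does; in particular it would not give you the structures $\calA_\delta=L_{\mu_\delta}[Y\cap\delta]$ and the $\Sigma^1_5$-correctness clause that the proof of Theorem~\ref{main_two} explicitly exploits.

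Your final paragraph also misreads the architecture of the paper: the forcings $\bbZ^\dagger_\calG(r)$ are \emph{not} interleaved into the $\omega_2$-length iteration. The co-analytic tight group $\calG$ is built entirely in $L$ (Theorem~\ref{THM_Coanalytic_Group}), and the content of Section~\ref{SEC_indestructibility} is that the unmodified Fischer--Friedman iteration $\bbP^*$ \emph{preserves} the tightness (and hence co-analyticity) of this ground-model group. The only reason the paper spells out the $C(Y)$ poset is that the strong-preservation argument needs to carry out a fusion inside $\calA_\delta$ using the function $\chi$; it does not alter the iteration itself.
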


The extension is obtained via a countable support iteration of length $\aleph_2$ of $S$-proper forcing notions. Recall that for a stationary set $T\subseteq \omega_1$, a forcing notion $\mathbb{Q}$ is {\emph{$S$-proper}}, if for every countable elementary submodel $M$ of $H(\theta)$, where $\theta$ is sufficiently large and such that $M\cap \omega_1\in S$, every condition $p\in \mathbb{Q}\cap M$ has an $(\mathcal{M},\mathbb{Q})$-generic extension. Note that most preservation theorems for proper iterations easily transfer to preservation theorems for $S$-proper posets (see \cite{Goldstern}). Each iterand in the forcing construction of \cite[Theorem 1]{VFSF} is
\begin{itemize}
    \item either a poset, referred as {\emph{Localisation}} and denoted $\mathcal{L}(\varphi)$ (see \cite[Definition 1]{VFSF}; \item or is a poset designed to add a club disjoint from a given stationary, co-stationary set $S^*$, denoted $Q(S^*)$ and known as a poset for {\emph{shooting a club}} (see \cite[Definition 3]{VFSF}) 
    \item or a forcing notion known as {\emph{Sacks coding}} and denoted $C(Y)$ where $Y$ is some appropriate subset of $\omega_1$ (see \cite[Definition 2]{VFSF}). 
\end{itemize}    
The localization posets are proper and do not add reals (see \cite[Lemmas 3,4]{VFSF}); the posets of the form $Q(S^*)$ are  $\omega_1\backslash S^*$ proper (see \cite{Goldstern}) and $\omega$-distributive (see \cite{Jech}), and thus they also do not add reals. The only iterands adding reals in this iteration are the forcing notions of the form $C(Y)$.  For the purposes of our theorem \ref{main_two}, we will work with a slight strengthening of the original Sacks coding from \cite{VFSF} which can be found in \cite[Section 2.3]{JBVFCS}. The only difference between the poset from \cite{VFSF} and the forcing notion given below is item (1)(b). We proceed by giving the explicit definition of the Sacks coding posets:

\begin{defi}\label{DFN_Sacks_Coding}
Assume $V = L[Y]$ for a fixed $Y \subseteq \omega_1$ and that $\omega_1 = (\omega_1)^L$.
\begin{enumerate}
 \item  Inductively define a sequence  $\mu_i$ for $i < \omega_1$ as follows. Given $\langle \mu_j \mid j < i\rangle$, we define $\mu_i$ to be the least ordinal $\mu$ such that 
 \begin{enumerate}
     \item $\mu > \sup_{j<i} \mu_j$,
     \item  $L_\mu[Y\cap i]$ is $\Sigma^1_5$ elementary in $L_{\omega_1}[Y\cap i]$ (note that instead of $\Sigma^1_5$ we can take any sufficiently high finite level of the projective hierarchy; see also \cite[Remark 1]{JBVFCS}) and 
    \item  $L_\mu[Y\cap i]$  is a model of $\ZF^-$ with $L_\mu[Y\cap i]\vDash ``\omega\text{ is the largest cardinal}"$.
  \end{enumerate}    
\item We will say that  $r \in 2^\omega$ codes $Y$ below $i$, where $i < \omega_1$, if for all $j < i$ the following holds:
$$j \in Y\text{  if and only if }L_{\mu_j}[Y\cap j, r] \vDash \ZF^-.$$
\item For $i<\omega_1$, we denote by $\mathcal{A}_i$ the model $L_{\mu_i}[Y\cap i]$ and for a perfect tree $T\subseteq 2^{<\omega}$ we denote by $\lvert T\rvert$ the least $i$ such that $T\in \mathcal{A}_i$.
\end{enumerate}
{\emph{Sacks coding}}, denoted $C(Y)$, is the poset of all  perfect trees $T \subseteq 2^{<\omega}$ with the property that every branch of $T$ codes $Y$ below $\lvert T\rvert$. The extension relation is reverse inclusion.
\end{defi}

In the following, let $\mathbb{P}^*=\mathbb{P}_{\omega_2}$ be the countable support iteration of \cite[Theorem 1]{VFSF}, in which
every instance of a Sacks coding is substituted by the one given above. That is $\bbP^*$ is a countable support iteration of the form $\langle \bbP_\alpha,\dot{\bbQ}_\beta:\alpha\leq\omega_2, \beta<\omega_2\rangle$
where each iterand is $S$-proper and (forced to be) in one of the above three types. Then by \cite[Theorem 2]{VFSF} in $L^{\mathbb{P}^*}$ there is a $\Delta^1_3$-wellorder of the reals, while $\mathfrak{d}=\aleph_1<\mathfrak{c}=\aleph_2$. In what follows, we will show that $\mathbb{P}^*$ preserves the tightness of the co-analytic tight cofinitary group constructed in the previous section. 

\subsection{Strong preservation of tight cofinitary groups}

Key notions in analyzing the forcing preservation of tight mad families, as well as tight med families, are notions which appear natural strengthening of the notion of  $(M,\bbP)$-genericity (see  \cite[Definition 6.1]{GHT} and \cite[Definition 4.1]{VFCS} respectively). For cofinitary groups, the analogous notion is formulated below: 

\begin{defi}\label{DFN_Strongly_Preserves}
Let $\bbP$ be an $S$-proper forcing notion and let $\calG$ be a tight cofinitary group. 
We say that $\bbP$ {\emph{strongly preserves tightness of $\calG$}} if
for every sufficiently large regular cardinal $\theta$, every $p \in \bbP$ and every countable $M$ elementary submodel  $M$ of $H(\theta)$ such that $M\cap \omega_1\in S$, which contains $\bbP, p, \calG$ as elements the following holds: If $g\in\calG$ densely diagonalizes every element of  $M \cap \mathcal I_i(\calG)^+$, then there is an $(M, \bbP)$-generic condition $q$ extending $p$ and forcing that
$g$ densely diagonalizes every element in $M[\dot{G}_{\bbP}] \cap \mathcal I_i(\calG)^+$. We say that  
 $q$ is an {\emph{$(M,\bbP,\calG,g)$-generic condition}}. 
\end{defi}

The preservation theorems, developed in \cite{GHT}, giving the preservation of tight mad families under countable support iterations transfer to preservation theorems for tight eventually different families, see  \cite{VFCS}, 
as well as to preservation theorems needed for our purposes (see also \cite{JBVFCS}):

\begin{lem}(\cite[Lemma 6.3]{GHT}) Let $\calG$ be a tight cofinitary group. Let $\bbP$ be an $S$-proper forcing notion which strongly preserves the tightness of $\calG$ and let $\dot{\bbQ}$ be a $\bbP$-name for an $S$-proper poset such that 
$$\Vdash_{\bbP}``\dot{\bbQ}\text{ strongly preserves the tightness of }\calG".$$
Then $\bbP*\dot{\bbQ}$ (is $S$-proper and) strongly preserves the tightness of $\calG$. Moreover, the following holds: If $g\in\calG$, $M\prec H(\theta)$ is countable with $M\cap\omega_1\in S$, $\calG, \bbP, \dot{Q}$ and $p\in \bbP\cap M$ is an $(M,\bbP, \calG,g)$-generic condition, $\dot{q}$ is a $\bbP$-name for an element of $\dot{\bbQ}$ from $M$ such that 
$$p\Vdash ``\dot{q}\hbox{ is an }(M[G],\dot{\bbQ},\calG,g)\text{-generic condition}",$$
then $(p,\dot{q})$ is $(M,\bbP*\dot{\bbQ},\calG, g)$-generic condition.
\end{lem}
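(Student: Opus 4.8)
The plan is to follow the proof of \cite[Lemma~6.3]{GHT} (and its transcription to eventually different families of permutations in \cite{VFCS}) essentially verbatim, the key observation being that the relation ``$g$ densely diagonalizes $T$'' is a statement about the fixed group $\calG$ and an injective tree $T$, evaluated in whatever model one is in, so that it composes transparently along a two-step iteration. I would isolate two tasks: (a) that $\bbP*\dot\bbQ$ is $S$-proper, and (b) the ``moreover'' clause; strong preservation of the tightness of $\calG$ by $\bbP*\dot\bbQ$ then follows formally from (b), by applying strong preservation of $\bbP$ and, in the extension, of $\dot\bbQ$.

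For (a), $S$-properness of a two-step iteration of $S$-proper posets is standard (see \cite{Goldstern}): given countable $M\prec H(\theta)$ with $M\cap\omega_1\in S$ containing $\bbP$ and $\dot\bbQ$, and $(p_0^0,\dot q_0)\in(\bbP*\dot\bbQ)\cap M$, one takes an $(M,\bbP)$-generic $p\extends p_0^0$; it forces $M[\dot G_\bbP]\prec H(\theta)^{V[\dot G_\bbP]}$ with $M[\dot G_\bbP]\cap\omega_1\in S$ and $\dot\bbQ\in M[\dot G_\bbP]$, and since $\bbP$ forces $\dot\bbQ$ to be $S$-proper, the maximal principle yields a $\bbP$-name $\dot q$ --- by elementarity of $M$, one inside $M$ --- forced by $p$ to be an $(M[\dot G_\bbP],\dot\bbQ)$-generic condition below $\dot q_0$; then $(p,\dot q)$ is $(M,\bbP*\dot\bbQ)$-generic by the composition lemma for generic conditions. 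That composition lemma is exactly the genericity half of (b): if $p$ is $(M,\bbP)$-generic, $\dot q\in M$ is a $\bbP$-name for a $\dot\bbQ$-condition, and $p$ forces that $\dot q$ is $(M[\dot G_\bbP],\dot\bbQ)$-generic, then $(p,\dot q)$ is $(M,\bbP*\dot\bbQ)$-generic --- one descends a dense-open $\bbP*\dot\bbQ$-name $\dot E\in M$ to $V[\dot G_\bbP]$, observes that the induced dense-open subset of $\dot\bbQ^{\dot G_\bbP}$ and a name for it lie in $M[\dot G_\bbP]$ (since $\dot E\in M$), and combines genericity of $\dot q$ over $M[\dot G_\bbP]$ with that of $p$ over $M$.

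The new content of (b) is the diagonalization half, which is almost immediate. First, $M[\dot G_{\bbP*\dot\bbQ}]=M[\dot G_\bbP][\dot G_{\dot\bbQ}]$, where $\dot G_{\dot\bbQ}$ is the $\dot\bbQ^{\dot G_\bbP}$-generic induced by $\dot G_{\bbP*\dot\bbQ}$, so $M[\dot G_{\bbP*\dot\bbQ}]\cap\calI_i(\calG)^+=M[\dot G_\bbP][\dot G_{\dot\bbQ}]\cap\calI_i(\calG)^+$. By hypothesis $p$ forces that $\dot q$ forces, over $\dot\bbQ^{\dot G_\bbP}$, that $g$ densely diagonalizes every element of $M[\dot G_\bbP][\dot G_{\dot\bbQ}]\cap\calI_i(\calG)^+$; and whenever a $\bbP*\dot\bbQ$-generic $G*H$ contains $(p,\dot q)$ one has $p\in G$ and $\dot q^{G}\in H$, so that conclusion holds in $V[G][H]$, i.e.\ $g$ densely diagonalizes every element of $M[\dot G_{\bbP*\dot\bbQ}]\cap\calI_i(\calG)^+$. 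Hence $(p,\dot q)$ is $(M,\bbP*\dot\bbQ,\calG,g)$-generic, which proves (b). Strong preservation then follows: given $(p_0^0,\dot q_0)\in(\bbP*\dot\bbQ)\cap M$ and $g$ densely diagonalizing every element of $M\cap\calI_i(\calG)^+$, strong preservation of $\bbP$ gives an $(M,\bbP,\calG,g)$-generic $p\extends p_0^0$; this $p$ forces that $M[\dot G_\bbP]$ is a countable elementary submodel of $H(\theta)^{V[\dot G_\bbP]}$ with $M[\dot G_\bbP]\cap\omega_1\in S$ containing $\dot\bbQ,\calG,g,\dot q_0$, and that $g$ densely diagonalizes every element of $M[\dot G_\bbP]\cap\calI_i(\calG)^+$ (the latter being precisely the content of $(M,\bbP,\calG,g)$-genericity of $p$); since $\bbP$ forces $\dot\bbQ$ to strongly preserve tightness of $\calG$, the maximal principle and elementarity of $M$ yield a $\bbP$-name $\dot q\in M$ such that $p$ forces $\dot q\extends\dot q_0$ and $\dot q$ is $(M[\dot G_\bbP],\dot\bbQ,\calG,g)$-generic, and (b) finishes.

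The step I expect to require the most care --- the only one that is not purely formal --- is the reflection invoked twice above, namely that the name $\dot q$ produced inside $V[\dot G_\bbP]$ may be taken in $M$. This is dealt with as in \cite{GHT,VFCS}: since $M$ is countable and elementary, every dense subset of $\dot\bbQ^{\dot G_\bbP}$ and every injective tree in $\calI_i(\calG)^+$ lying in $M[\dot G_\bbP]$ has a $\bbP$-name in $M$, so the requirement on $\dot q$ unwinds into a countable conjunction of forcing statements whose parameters lie in $M\cup\{p,g,\calG\}$; this defines a set of $\bbP$-names that is nonempty by strong preservation of $\dot\bbQ$ and therefore meets $M$. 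Apart from this bookkeeping the argument is a line-by-line transcription of \cite[Lemma~6.3]{GHT} and the corresponding lemmas of \cite{VFCS}: no new combinatorics of cofinitary groups intervenes, as $\calG$ enters only through the fixed collection $\calI_i(\calG)^+$ and the relation of dense diagonalization.
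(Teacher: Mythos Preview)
Your proposal is correct and follows precisely the approach the paper intends: the paper does not give its own proof of this lemma but simply cites \cite[Lemma~6.3]{GHT}, noting that the preservation theorems for tight mad families transfer verbatim to the present setting since $\calG$ enters only through the fixed collection $\calI_i(\calG)^+$ and the notion of dense diagonalization. Your sketch unpacks exactly that transfer, and the one point you flag as delicate---arranging the name $\dot q$ to lie in $M$---is handled just as in \cite{GHT,VFCS} (and in fact is not needed for the bare strong preservation conclusion, only for the ``moreover'' clause as stated).
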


Preservation at limit stages is taken care of by the following lemma.

\begin{lem}(\cite[Proposition 6.4]{GHT}) Let $\calG$ be a tight cofinitary group. Let $\calP=\langle \bbP_\alpha, \dot{\bbQ}_\beta: \alpha\leq \gamma, \beta  <\gamma\rangle$ be a countable support iteration of $S$-proper forcing notions such that 
$$\Vdash_{\bbP_\alpha}``\dot{\bbQ}_\alpha\text{ strongly preserves the tightness of }\calG".$$
Let $M\prec H(\theta)$ be countable for $\theta$ sufficiently large regular cardinal such that $\calG,\calP,\gamma$ are elements of $M$, $M\cap \omega_1\in S$ and let $g\in\calG$ diagonalize every element of $\calI_i(\calG)^+\cap M$. Then, for every $\alpha\in M\cap\gamma$ and every $(M,\bbP_\alpha,\calG,g)$-generic condition $p\in\bbP_\alpha$ the following holds: 

If $\dot{q}$ is a $\bbP_\alpha$-name such that $p\Vdash_{\bbP_\alpha} ``\dot{q}\in\bbP_\gamma\cap M\text{ and }\dot{q}\upharpoonright \alpha\in \dot{G}_\alpha"$, 
then there is an $(M,\bbP_\alpha,\calG,g)$-generic condition $\bar{p}\in\bbP_\gamma$ such that $\bar{p}\upharpoonright \alpha =p$ and $\bar{p}\Vdash_{\bbP_\gamma}``\dot{q}\in \dot{G}"$.    
\end{lem}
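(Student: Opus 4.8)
The plan is to prove the lemma by induction on $\gamma$, following the structure of the classical limit-preservation argument for countable support iterations of $S$-proper forcings, i.e.\ essentially verbatim as in \cite[Proposition~6.4]{GHT} and its adaptation to tight eventually different families in \cite{VFCS}; the only point requiring fresh verification is how the diagonalization requirement on $g$ --- that $g$ densely diagonalizes every positive injective tree belonging to $M$, resp.\ to $M[\dot G]$ --- propagates through the iteration.

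\emph{Successor step.} Let $\gamma = \delta + 1$; since $\gamma \in M$, also $\delta \in M$. Given $\alpha \in M \cap \gamma$, an $(M, \bbP_\alpha, \calG, g)$-generic $p$, and a name $\dot q$ as in the hypothesis, first apply the induction hypothesis at $\delta$ (when $\alpha < \delta$; when $\alpha = \delta$ put $p' := p$) to obtain an $(M, \bbP_\delta, \calG, g)$-generic $p' \in \bbP_\delta$ with $p' \upharpoonright \alpha = p$ and $p' \Vdash_{\bbP_\delta} \dot q \upharpoonright \delta \in \dot G_\delta$. By definition, $p'$ forces that $g$ densely diagonalizes every element of $M[\dot G_\delta] \cap \calI_i(\calG)^+$; since $\Vdash_{\bbP_\delta}$ ``$\dot\bbQ_\delta$ strongly preserves the tightness of $\calG$'' and $p'$ forces $\dot q(\delta)$ to be a condition of $\dot\bbQ_\delta$ lying in $M[\dot G_\delta]$, we may fix a $\bbP_\delta$-name $\dot q_\delta \in M$ which $p'$ forces to be an $(M[\dot G_\delta], \dot\bbQ_\delta, \calG, g)$-generic condition below $\dot q(\delta)$. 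By the preceding two-step lemma, $\bar p := (p', \dot q_\delta)$ is $(M, \bbP_\gamma, \calG, g)$-generic, and plainly $\bar p \upharpoonright \alpha = p$ and $\bar p \Vdash \dot q \in \dot G$.

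\emph{Limit step.} Let $\gamma$ be a limit. As $M$ is countable, fix a nondecreasing sequence $\langle \alpha_n \mid n < \omega\rangle$ in $M \cap \gamma$ with $\alpha_0 = \alpha$ and $\bigcup_n \alpha_n \supseteq M \cap \gamma$, and run the fusion of \cite[Proposition~6.4]{GHT}: recursively build $(M, \bbP_{\alpha_n}, \calG, g)$-generic conditions $p_n \in \bbP_{\alpha_n}$ with $p_0 = p$, $p_{n+1} \upharpoonright \alpha_n = p_n$ and $p_{n+1} \Vdash \dot q \upharpoonright \alpha_{n+1} \in \dot G_{\alpha_{n+1}}$, where each step uses one application of the induction hypothesis at $\alpha_{n+1} < \gamma$, and a bookkeeping device ensures that $\bar p := \bigcup_n p_n$ meets every open dense subset of $\bbP_\gamma$ lying in $M$ and that every pair $(\dot T, \dot s)$ of $\bbP_\gamma$-names in $M$ --- naming a positive injective tree and one of its nodes --- is treated at some stage. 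The one new ingredient is the treatment of such a pair at stage $n$: passing to a name for $M[\dot G_{\alpha_n}]$, and provided some condition of the tail forcing $\bbP_\gamma / \dot G_{\alpha_n}$ forces ``$\dot T \in \calI_i(\calG)^+ \wedge \dot s \in \dot T$'' --- otherwise the requirement on this pair already holds below $p_n$ --- fix such a condition $r$ deciding $\dot s$ to be a concrete node $s$, and, inside $M[\dot G_{\alpha_n}]$, form the \emph{tree of possibilities}
\[
    T^\ast := \set{t \in \omega^{<\omega}}{\exists r' \leq r \ \text{ with } \ r' \Vdash_{\bbP_\gamma / \dot G_{\alpha_n}} \dot T \in \calI_i(\calG)^+ \wedge t \in \dot T}.
\]
Then $T^\ast$ is an injective tree in $M[\dot G_{\alpha_n}]$, and $T^\ast \in \calI_i(\calG)^+$: positivity of $T^\ast$ at a node is witnessed --- via $r$ forcing positivity of $\dot T$ --- by concrete sequences which extensions of $r$ force into $\dot T$, and these lie in $T^\ast$ by definition. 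Since $p_n$ is $(M, \bbP_{\alpha_n}, \calG, g)$-generic, it forces $g$ to densely diagonalize every member of $M[\dot G_{\alpha_n}] \cap \calI_i(\calG)^+$, hence $T^\ast$; as $s \in T^\ast$ this produces, below $r$, a concrete extension of $s$ in $\dot T$ hitting $g$, i.e.\ a condition of $\bbP_\gamma / \dot G_{\alpha_n}$ forcing that $g$ diagonalizes $\dot T$ at $\dot s$, and feeding this through the next application of the induction hypothesis yields $p_{n+1}$ as required. Finally, $\bar p = \bigcup_n p_n$ has countable support contained in $\bigcup_n \alpha_n$, hence is a condition of $\bbP_\gamma$ below every $p_n$ with $\bar p \upharpoonright \alpha = p$ and $\bar p \Vdash \dot q \in \dot G$; that $\bar p$ is $(M, \bbP_\gamma)$-generic and forces $g$ to densely diagonalize every element of $M[\dot G_{\bbP_\gamma}] \cap \calI_i(\calG)^+$ follows from the bookkeeping by the usual limit argument.

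\emph{Main obstacle.} The substantive work is not conceptual but consists in reproducing, with $\calI_i(\calG)^+$ in place of the positivity notion for mad families, the delicate fusion bookkeeping of \cite[Proposition~6.4]{GHT}: one must absorb the diagonalization witnesses supplied by the trees $T^\ast$ into the fusion at the correct levels $\alpha_{n+1}$ without disturbing the threading of $\dot q$ or the $(M, \bbP_{\alpha_{n+1}})$-genericity of $p_{n+1}$, and one must check that in the limit these finite approximations deliver the \emph{strong} $(M, \bbP_\gamma, \calG, g)$-genericity of $\bar p$ rather than mere $(M, \bbP_\gamma)$-genericity. Once the two key observations are in place --- that $T^\ast$ is a positive injective tree of $M[\dot G_{\alpha_n}]$, and that $(M, \bbP_{\alpha_n}, \calG, g)$-genericity of $p_n$ is precisely the assertion that $g$ diagonalizes everything in $M[\dot G_{\alpha_n}] \cap \calI_i(\calG)^+$ --- the transfer of the argument of \cite[Proposition~6.4]{GHT} is routine.
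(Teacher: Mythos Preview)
The paper does not give its own proof of this lemma: it is stated with a citation to \cite[Proposition~6.4]{GHT} and used as a black box. Your proposal is a faithful sketch of the standard countable-support preservation argument (induction on $\gamma$, two-step lemma at successors, fusion with bookkeeping at limits, with the ``tree of possibilities'' $T^\ast$ replacing the hull argument), adapted from the mad-family setting of \cite{GHT} to $\calI_i(\calG)^+$; this is exactly the transfer the paper implicitly invokes, so there is nothing to compare.
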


The above two lemmas imply:

\begin{cor}(see \cite[Corollary 6.5]{GHT})\label{crX} Let $\calG$ be a tight cofinitary group. Let
$$\calP=\langle \bbP_\alpha, \dot{\bbQ}_\beta:\alpha\leq\gamma,\beta<\gamma\rangle$$ 
be a countable suppor iteration of $S$-proper forcing notions such that for each $\alpha<\gamma$
$$\Vdash_{\bbP_\alpha}``\dot{\bbQ}_\alpha\text{ strongly preserves the tightness of }\calG".$$  
Then $\Vdash_{\bbP_\gamma}``\calG\text{ is a tight cofinitary group}"$.
\end{cor}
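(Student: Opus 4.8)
The plan is to deduce the corollary in two stages: first show, by induction on $\gamma$, that the entire iteration $\bbP_\gamma$ strongly preserves the tightness of $\calG$ in the sense of Definition~\ref{DFN_Strongly_Preserves}; then pass from this preservation property to the conclusion that $\calG$ is still tight in the $\bbP_\gamma$-extension by a standard elementary-submodel reflection, using that $\calG$ is tight in the ground model $V$. For the first stage the induction on $\gamma$ runs as follows. The case $\gamma=0$ is vacuous. For $\gamma=\alpha+1$, the induction hypothesis gives that $\bbP_\alpha$ strongly preserves the tightness of $\calG$, and by hypothesis $\Vdash_{\bbP_\alpha}``\dot{\bbQ}_\alpha\text{ strongly preserves the tightness of }\calG"$, so $\bbP_{\alpha+1}=\bbP_\alpha*\dot{\bbQ}_\alpha$ strongly preserves the tightness of $\calG$ by \cite[Lemma 6.3]{GHT}. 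For $\gamma$ a limit, fix a sufficiently large regular $\theta$, a condition $p\in\bbP_\gamma$, a countable $M\prec H(\theta)$ with $M\cap\omega_1\in S$ and $\calP,\bbP_\gamma,\gamma,p,\calG\in M$, and a $g\in\calG$ densely diagonalizing every element of $M\cap\calI_i(\calG)^+$. Since $M[\dot{G}_{\bbP_0}]=M$, the trivial condition of $\bbP_0$ is (vacuously) an $(M,\bbP_0,\calG,g)$-generic condition, and $0\in M\cap\gamma$; applying \cite[Proposition 6.4]{GHT} with $\alpha=0$ and $\dot{q}:=\check{p}$ (noting $\check{p}\restr 0\in\dot{G}_0$ trivially) produces an $(M,\bbP_\gamma,\calG,g)$-generic condition $q$ with $q\Vdash\check{p}\in\dot{G}$, i.e.\ $q\extends p$. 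This is precisely what Definition~\ref{DFN_Strongly_Preserves} demands. (Alternatively the limit step can be run as the usual fusion along a cofinal $\omega$-chain in $M\cap\gamma$, invoking \cite[Proposition 6.4]{GHT} at each step; the one-step version above is cleaner.)

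For the second stage it suffices to show that for every $p\in\bbP_\gamma$ and every $\bbP_\gamma$-name $\dot{\mathcal T}=\seq{\dot{T}_n}{n<\omega}$ with $p\Vdash``\dot{T}_n\in\calI_i(\calG)^+\text{ for all }n"$ there are $q\extends p$ and $g\in\calG$ with $q\Vdash``g\text{ densely diagonalizes }\dot{T}_n\text{ for all }n"$; a routine density argument then yields $\Vdash_{\bbP_\gamma}``\calG\text{ is tight}"$ in the sense of Definition~\ref{DEF_Tight_Cofinitary}, and, since being a group of permutations is absolute, Remark~\ref{DEF_Tight_Implies_Maximal} upgrades this to $\Vdash_{\bbP_\gamma}``\calG\text{ is a tight cofinitary group}"$. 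To carry this out, choose a countable $M\prec H(\theta)$ with $M\cap\omega_1\in S$ and $\calP,\bbP_\gamma,\gamma,p,\dot{\mathcal T},\calG\in M$; since $M$ is countable, $M\cap\calI_i(\calG)^+$ is a countable family of positive trees, so tightness of $\calG$ in $V$ provides a single $g\in\calG$ densely diagonalizing every member of it. By the first stage there is an $(M,\bbP_\gamma,\calG,g)$-generic condition $q\extends p$, so $q$ forces $g$ to densely diagonalize every element of $M[\dot{G}]\cap\calI_i(\calG)^+$. As $\dot{\mathcal T}\in M$, each coordinate name $\dot{T}_n$ may be taken inside $M$, so $q$ forces $\dot{T}_n\in M[\dot{G}]$, while $q\extends p$ forces $\dot{T}_n\in\calI_i(\calG)^+$. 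Here I would spell out the (standard) absoluteness point: the parameter $\calG$ is not changed by the forcing, and a witness to $T\in\calI_i(\calG)$ consists of a single node of $T$ together with a \emph{finite} subset of $\calG$ — an object already lying in $V$ — so ``$T\in\calI_i(\calG)^+$'' and ``$g$ densely diagonalizes $T$'' are arithmetic in $T$ over fixed $V$-parameters, hence absolute between $M[\dot{G}]$, the full extension, and $V$. Therefore $q$ forces $\dot{T}_n\in M[\dot{G}]\cap\calI_i(\calG)^+$ for every $n$, hence forces $g$ to densely diagonalize every $\dot{T}_n$, as required.

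Given \cite[Lemma 6.3]{GHT} and \cite[Proposition 6.4]{GHT}, the corollary is essentially bookkeeping, so I do not expect a genuine obstacle inside its proof; the only point that must be written out with any care is the absoluteness observation in the second stage, and that is routine precisely because $\calI_i(\calG)$ is defined from the fixed ground-model object $\calG$ using only finite data. The substantive difficulty — forcing a single ground-model permutation $g$ to continue to densely diagonalize the \emph{new} positive trees appearing at each stage of the iteration, in particular at the Sacks coding iterands — is exactly what those two cited lemmas encapsulate, and their proofs run through a fusion/amalgamation construction together with strengthened generic hitting lemmas of the type of Lemma~\ref{LEM_Tree_Extension}.
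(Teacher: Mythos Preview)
Your proposal is correct. The paper gives no explicit proof of this corollary, merely stating that ``the above two lemmas imply'' it; your two-stage argument---first establishing that $\bbP_\gamma$ itself strongly preserves tightness via the two-step and limit lemmas, then reflecting a given name $\dot{\mathcal T}$ into a countable $M\prec H(\theta)$ with $M\cap\omega_1\in S$ and using ground-model tightness to pick $g$---is precisely the intended route and fills in the details correctly. (One minor remark: your invocation of \cite[Proposition~6.4]{GHT} with $\alpha=0$ at limits in fact works for any $\gamma>0$, so the separate successor case via \cite[Lemma~6.3]{GHT} is redundant, though of course not wrong; and the absoluteness paragraph, while accurate, is slightly more than is needed, since $\dot T_n\in M[\dot G]$ and $\dot T_n\in\calI_i(\calG)^+$ in $V[G]$ already place $\dot T_n$ in the intersection $M[\dot G]\cap\calI_i(\calG)^+$ directly.)
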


Equipped with the above preservation theorems, we can prove our second main theorem:

\begin{thm}\label{main_two} It is relatively consistent that $\mathfrak{a}_g=\mathfrak{d}=\aleph_1<\mathfrak{c}=\aleph_2$, there is a $\Delta^1_3$-well-order of the reals and a co-analytic tight cofinitary group of cardinality $\aleph_1$.
\end{thm}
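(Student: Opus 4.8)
The plan is to combine the two main constructions from the excerpt: the co-analytic tight cofinitary group $\calG$ built in $L$ via Zhang's forcing with coding into orbits (Theorem~\ref{THM_Coanalytic_Group}), and the iteration $\bbP^*$ of \cite{VFSF} (with Sacks coding replaced by the strengthened variant of \cite{JBVFCS}) that produces a $\Delta^1_3$-well-order of the reals together with $\frd=\aleph_1<\frc=\aleph_2$. The key point to verify is that $\bbP^*$ does not destroy the tightness of $\calG$; by Corollary~\ref{crX}, this reduces to showing that each iterand of $\bbP^*$ strongly preserves the tightness of $\calG$ in the sense of Definition~\ref{DFN_Strongly_Preserves}.

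First I would dispatch the two iterands that add no reals. The localization posets $\calL(\phi)$ and the club-shooting posets $Q(S^*)$ are (respectively) proper and $\omega_1\setminus S^*$-proper, and both are $\omega$-distributive, hence add no new reals; in particular they add no new injective trees and no new elements of $\calI_i(\calG)^+$. Thus for these iterands strong preservation is essentially vacuous: given the hypothesised $g\in\calG$ densely diagonalizing every tree in $M\cap\calI_i(\calG)^+$, any $(M,\bbQ)$-generic condition works, since $M[\dot G]\cap\calI_i(\calG)^+ = M\cap\calI_i(\calG)^+$ (the new model has no new reals, and elementarity of $M$ together with properness gives $M[\dot G]\cap V = M\cap V$ for the relevant objects). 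This step is routine.

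The substantive step is to show that the strengthened Sacks coding poset $C(Y)$ (Definition~\ref{DFN_Sacks_Coding}) strongly preserves the tightness of $\calG$. This is the main obstacle. The argument should follow the template of \cite[Section~2.3]{JBVFCS}, where Sacks coding is shown to strongly preserve tight eventually different families: one performs a fusion construction along a tree of conditions, at each splitting level using the strengthened coding requirement (1)(b) of Definition~\ref{DFN_Sacks_Coding} to have enough room, and at the key steps one must verify that whenever a name $\dot T$ for an element of $\calI_i(\calG)^+$ appears in $M[\dot G]$, one can refine the Sacks condition so that $g$ is forced to densely diagonalize $\dot T$ — i.e.\ for each $s$ in (a name for) the tree one can extend to catch a value of $g$. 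The crucial input making this go through is precisely the robustness of our new coding: because we code into \emph{orbit lengths} rather than function values, the generic hitting lemma (Lemma~\ref{LEM_Tree_Extension}, and its coding variants Lemma~\ref{LEM_Tree_Extension_Coding}, Corollary~\ref{LEM_Tree_Extension_Stable_Coding}) holds in the stronger form needed here, and — being parameter-free — the coding apparatus survives passing to $\calG$ of size $\aleph_1$. One combines the Sacks fusion with repeated applications of these hitting lemmas, interleaving the diagonalization requirements for the countably many trees in $M$ with the coding requirements of $C(Y)$, exactly as in \cite{JBVFCS}. I expect the bookkeeping — aligning the Sacks fusion levels, the coding levels $\mu_i$, and the enumeration of $\calI_i(\calG)^+\cap M[\dot G]$-names — to be the most delicate part.

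Finally, granting that every iterand strongly preserves tightness, Corollary~\ref{crX} yields $\Vdash_{\bbP^*}$``$\calG$ is a tight cofinitary group''. Since tight cofinitary groups are maximal (Remark~\ref{DEF_Tight_Implies_Maximal}) and $\calG$ has size $\aleph_1$, in $L^{\bbP^*}$ we get $\fra_g\le\aleph_1$, hence $\fra_g=\aleph_1$; the group $\calG$ remains co-analytic because $\bbP^*$ is (equivalent to) a proper, hence $\aleph_1$- and $\aleph_2$-preserving forcing and the $\Pi^1_1$ definition of $\calG$ from Theorem~\ref{THM_Coanalytic_Group} is upward absolute in the relevant sense (the witnessing well-founded models are unchanged). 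By \cite[Theorem~2]{VFSF} the extension also has a $\Delta^1_3$-well-order of the reals and satisfies $\frd=\aleph_1<\frc=\aleph_2$. Putting these together gives the consistency of $\fra_g=\frd=\aleph_1<\frc=\aleph_2$ with a $\Delta^1_3$-well-order and a co-analytic tight cofinitary group of size $\aleph_1$, as claimed; and by Remark~\ref{RK1} the co-analytic complexity is optimal.
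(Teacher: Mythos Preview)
Your overall architecture matches the paper's: start with the co-analytic tight group $\calG$ in $L$, run $\bbP^*$, and check strong preservation iterand by iterand, invoking Corollary~\ref{crX} at the end. Your treatment of the $\omega$-distributive iterands is fine. But there is a genuine gap in the Sacks-coding step, and a misattribution of what makes it work.

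The real difficulty with $C(Y)$ is not the bookkeeping of the fusion but ensuring that the fusion limit $q_\omega=\bigcap_n q_n$ is a \emph{condition}: one needs $q_\omega\in\calA_\delta$, where $\delta=M\cap\omega_1$, since $|q_\omega|=\delta$ and every branch of $q_\omega$ must code $Y$ below $|q_\omega|$. For this the entire fusion sequence---including the diagonalizing permutation $g$ used at each step---must be definable inside $\calA_\delta$. The paper secures this by not taking $g$ to be an arbitrary element of $\calG$ diagonalizing $M\cap\calI_i(\calG)^+$, but rather $g=\chi(\bar M)$, the $\leq_L$-least such element; one verifies that $\chi$ is $\Delta^1_3$, and then the strengthened clause~(1)(b) of Definition~\ref{DFN_Sacks_Coding} ($\Sigma^1_5$-elementarity of $L_{\mu_\delta}[Y\cap\delta]$ in $L_{\omega_1}[Y\cap\delta]$) yields $g\in\calA_\delta$. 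This is precisely what (1)(b) buys---not vague ``room''---and without it the fusion limit need not lie in any $\calA_i$ and the argument collapses. Your proposal never introduces $\chi$ or addresses why $g$ should lie in $\calA_\delta$.

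You also misidentify the ``crucial input''. The generic hitting lemmas (Lemma~\ref{LEM_Tree_Extension} and its coding variants) are used in Sections~\ref{SEC_Zhang}--\ref{SEC_co-analytic} to \emph{construct} $\calG$; they play no role whatsoever in the Sacks preservation step. Once $\calG$ is tight, the fusion step uses only that the chosen $g$ densely diagonalizes every hull tree in $\bar M[Y\cap\delta]\cap\calI_i(\calG)^+$ (available by the inductive hypothesis), plus the definability of $g$ just described so that the construction can be run inside $\calA_\delta$.
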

\begin{proof} Assume $V=L$. Let $\calG$ be the co-analytic tight cofinitary group constructed in the previous section. Note that $\calG$ provably consists only of constructible reals. 

For a given countable transitive model $\bar{M}$ of a sufficiently large fragment of $ZFC$, let $\chi(\bar{M})$ denote the $\leq_L$-least $g\in\calG$, which diagonalizes all elements of $\bar{M}\cap\calI_i(\calG)^+$. As in the proof of \cite[Lemma 4.3]{JBVFCS} one can observe that the function $\chi:\bar{M}\mapsto \chi(\bar{M})$ is $\Delta^1_3$-definable, since $(\bar{M},g)\in\chi$ iff
\begin{enumerate}
\item $\bar{M}\vDash``\text{sufficiently large fragment of }ZFC"$ (which is arithmetic)
\item $g\in\calG$ (which is $\Pi^1_1$)
\item $g$ diagonalizes every $T\in\bar{M}\cap\calI_i(\calG)^+$ (which is $\Pi^1_2$)
\item  for every $h\in \calG $ satisfying the above item, $g\leq_L h$ (which is $\Pi^1_2$).
\end{enumerate}    
Note that the image of every real under the Mostowski collapse is itself and so, we might refer to $\chi(M)$ 
instead of $\chi(\bar{M})$ for arbitrary countable models $M$. 

Let $\bbP^*=\langle \bbP_\alpha,\dot{\bbQ}_\beta:\alpha\leq\omega_2, \beta<\omega_2\rangle$ be the poset defined in Section 5.1. producing a cardinal preserving generic extension of the negation of CH and a $\Delta^1_3$-wellorder of the reals.  By induction on $\gamma<\omega_2$, we will prove that if $M$ is a countable elementary submodel of $L_{\omega_2}$ such that $M\cap\omega_1\in S$, $p\in \bbP_\gamma$, $\gamma,\bbP_\gamma, p$ are elements of $M$, then there is an $(M,\bbP_\gamma,\calG,\chi(M))$-generic condition $q$ extending $p$. This implies that for each $\gamma<\omega_2$, 
$\Vdash_{\bbP_\gamma}``\calG\text{ is a tight cofinitary group}"$ and so by Corollary \ref{crX} we obtain $L^{\bbP_{\omega_2}}\vDash(\calG\text{ is tight}$).

It is sufficient to consider iterands of the form $C(Y)$. Thus, assume the statement is proved for $\bbP_\gamma$ and let $\Vdash_{\bbP_\gamma}\dot{\bbQ}_\gamma=\dot{C}(Y)$. Let $M$ be a countable elementary submodel of $L_{\omega_2}$,  $M\cap \omega_1=\delta\in S$, $(p,\dot{q})\in \bbP_\gamma *\dot{C}(Y)$ with all relevant parameters in $M$.  Denote by $\bar{M}$ the Mostowski collapse of $M$ and note that $\bar{M}=L_\alpha$ for some $\alpha$. For each $x\in M$, let $\bar{x}$ denote the image of $x$ in $\bar{M}$ under the Mostowski collapse and let $g=\chi(\bar{M})$. By the inductive hypothesis, there is $q^*\leq p$  which is $(M,\bbP_\gamma,\calG,g)$-generic. Let $G$ be a $\bbP_\gamma$ generic filter of $L$ such that $q^*\in G$. Note that $L[G]=L[Y]$ for some $Y\subseteq\omega_1$ and in $L[Y]$ we have $\bar{M}[Y\cap \delta]\prec L_{\omega_2}[Y]$ and $g$ densely diagonalizes every element of $\calI_i(\calG)^+\cap\bar{M}[Y\cap \delta]$. Moreover, $\omega_1\cap\bar{M}[Y\cap\delta]=\delta\in S$ and both $\bar{M}$, and $\bar{M}[Y\cap \delta]$ are elements of $\calA_\delta=L_{\mu_\delta}[Y\cap \delta]$. By the $\Delta^1_3$-correctness of $\calA_\delta$, we get that $g\in\calA_\delta$. 

Fix a countable, cofinal in $\delta$ sequence $\{\delta_n:n\in\omega\}\in\calA_\delta$ such that for each $n\in\omega$, $\delta_n\in\bar{M}[Y\cap\delta]$. Clearly, the sequence itself is not an element of $\bar{M}[Y\cap\delta]$, however it is an element of $\calA_\delta$. Let $q\in M[Y]$ be the evaluation of $\dot{q}$ in $M[Y]$ and let $\bar{q}$ be its image in $\bar{M}[Y\cap\delta]$. We will find an extension $q_\omega\leq \bar{q}$ which is  $(M[Y], C(Y),\calG,g)$-generic condition and is an \underline{element of $\calA_\delta$}.  

Let $\{D_k:k\in\omega\}$ be an enumeration in $\calA_\delta$ of all dense subsets of $\bar{C(Y)}$ which are elements of $\bar{M}[Y\cap \delta]$ and let $\{\dot{T}_k:k\in\omega\}$ enumerate in $\calA_\delta$ all $\bar{C(Y)}$-names for subtrees of $\omega^{<\omega}$ in $\bar{M}[Y\cap\delta]$ which are forced to be in $\calI_i(\calG)^+$, so that each name appears cofinally often. Moreover fix a bijection $\varphi_\omega\to\omega^2$ such that $\varphi\in\calA_\delta$, e.g. $\varphi$ is a computable map, with coordinate maps $\varphi_0$ and $\varphi_1$ as a book-keeping device. We will inductively construct sequences $\langle q_n: n\in\omega\rangle, \langle \dot t_n : n\in\omega\rangle, \langle \dot k_n : n\in\omega\rangle$ in  $\mathcal A_\delta$ with the following properties:
    \begin{enumerate}
        \item $q_0 = \bar{q}$,
        \item $q_{n+1} \in C(Y) \cap \bar M[Y\cap\delta]$ and $q_{n+1} \leq_{n+1} q_n $
        \item\label{cond.codes} $\lvert q_{n+1} \rvert \geq \delta_n$,
        \item\label{cond.dense} $q_{n+1} \forces \bar D_n \cap \dot G  \neq \emptyset$,
        \item\label{cond.trees1} $\dot t_n$ is a $C(Y)^{\bar M[Y\cap\delta]}$-name in $\bar M[Y\cap\delta]$ for an element of $\dot T_{\phi_0(n)}$ and $\dot k_n$ is a $C(Y)^{\bar M[Y\cap\delta]}$-name in $\bar M[Y\cap\delta]$ for an element of $\omega$,
        \item\label{cond.trees2} $q_{n+1} \forces$``with $\dot s$ the $\phi_1(n)$-th node in $\dot T_{\phi_0(n)}$, $\dot k_n \in \dom(\dot t_n) \setminus \dom(\dot s)$ and $\dot t_n(\dot k_n) = \check g(\dot k_n)$''.
    \end{enumerate}
\noindent    
Suppose we have already constructed this sequence up to $q_n$. Let $q_{n+1} \leq_{n+1} q_n$ be the least condition (in the canonical well-ordering of $\mathcal A_\delta$) such that for each $(n+1)$-th splitting node $t$ of $q_n$, $q:=(q_{n+1})_t$ satisfies the following:
    \begin{enumerate}[(i)]
    \item\label{i.codes} $\lvert q\rvert \geq \delta_n$,
    \item\label{i.dense} $q\in \bar D_n$ and moreover, 
    \item \label{i.trees}
    for some $s,t \in 2^{<\omega}$ with $s\subseteq t$ and some $k \in \omega$, it holds that $q \forces$``$
    \check s$ is the $\phi_1(n)$-th node in $\dot T_{\phi_0(n)}$,
    $\check t \in T_{\phi_0(n)}$'', 
    and $k \in \dom(t) \setminus \dom(s)$ and $t(k) = g(k)$.
    \end{enumerate}

\noindent
It is clear that the set of $q$ satisfying \eqref{i.codes} and \eqref{i.dense} is dense in $C(Y)\cap \bar M[Y\cap\delta]$, as  $\bar{M}[Y\cap \delta]\prec L_{\omega_2}[Y]$. To see that a condition $q$ as required exists, it therefore remains to show the following claim:

\begin{clm*}
The set of conditions $q$ satisfying \eqref{i.trees} is dense in $C(Y)$.
\end{clm*}
\begin{proof}To see this claim, let us write $\dot T = \dot T_{\phi_0(n)}$ and let $q^* \in C(Y)$ be arbitrary. 
Find $q' \leq q^*$ and $s$ such that $q' \forces``\check s\text{ is the }\phi_1(n)\text{-th node in }\dot T"$. Note that $T := \{t \in \omega^{<\omega} \colon q' \not\forces \check t \notin \dot\dot T\}$
is an element of $\mathcal I_i(\calG)^+\cap \bar M[Y\cap \delta]$, as can be verified in a straightforward manner from the definition. Therefore, by assumption, $g$ densely diagonalizes $T$ and we can find $k$ and $t \in T$ such that $s \subseteq t$, $k \in \dom(t)\setminus \dom(s)$, and $t(k) = g(k)$.
Finally, as $t \in T$, we can find $q \leq q'$ such that  $q \forces \check t \in \dot T$.
This finishes the proof of the claim and hence the construction of $q_{n+1}$.
\end{proof}
    
It is clear from item \eqref{i.dense} that $q_{n+1}$ satisfies \eqref{cond.dense}. It is also clear that from item \eqref{i.trees}  that we can find names $\dot k_n, \dot t_n \in \bar M[Y\cap\delta]$ satisfying \eqref{cond.trees1} and \eqref{cond.trees2} (in fact, these names can be chosen to be finite).
Finally, since $\mathcal A_\delta$ satisfies $\ZF^-$ and all the required data for the definition of these sequences is an element of $\mathcal A_\delta$, the above definition yields sequences which are also elements of this model, as required.

Define $q_\omega = \bigcap_{n\in\omega} q_n$. To see that $q_\omega \in C(Y)$, observe that $\lvert q_\omega \rvert = \delta$ and that $q_\omega$ codes $Y$ up to $\delta$ since by \eqref{cond.codes}, $q_\omega$ codes $Y$ below $\delta$ 
    and because
    $q_\omega \in \mathcal A_\delta$.
    It is now straightforward to verify from the definitions that 
    $q_\omega$ is $(M[Y],C(Y), \calG,g)$-generic condition.
    \end{proof}

Our proof in fact shows that any sufficiently definable tight cofinitary group will survive the poset adjoining a 
$\Delta^1_3$-wellorder to the reals, as long as the elementarity (1).(b) requirement from Definition \ref{DFN_Sacks_Coding} accounts for the appropriate level of projective complexity. By Remark \ref{main_two} the witness to $\mathfrak{a}_g$ provided in the above theorem is of optimal projective complexity.

\section{Further applications}

Our construction provides not only a new proof of the existence of a Miller indestructible maximal cofinitary group (originally proved by Kastermans and Zhang with the use of a diamond sequence, see~\cite{BKYZ}) and answers Question 2 of~\cite{JBVFCS}, but gives a uniform proof of the existence of a co-analytic witness to $\mathfrak{a}_g$ in various forcing extensions:

\begin{cor}\label{COR_Main} Each of the following cardinal characteristics constellations is consistent with the existence of a co-analytic tight witness to $\mathfrak{a}_g$ and a $\Delta^1_3$-well-order of the reals: 
\begin{enumerate}
    \item $\mathfrak{a}_g=\mathfrak{u}=\mathfrak{i}=\aleph_1<\mathfrak{c}=\aleph_2$,
    \item $\mathfrak{a}_g=\mathfrak{u}=\aleph_1<\mathfrak{i}=\mathfrak{c}=\aleph_2$,
    \item $\mathfrak{a}_g=\mathfrak{i}=\aleph_1<\mathfrak{u}=\mathfrak{c}=\aleph_2$,
    \item $\mathfrak{a}_g=\aleph_1<\mathfrak{i}=\mathfrak{u}=\aleph_2$.
\end{enumerate}
In addition, in each of the above constellations the characteristics $\mathfrak{a}$, $\mathfrak{a}_e$, $\mathfrak{a}_e$, $\mathfrak{a}_p$ can have tight co-analytic witnesses of cardinality $\aleph_1$; in items $(1)$ and $(2)$, the ultrafilter number $\mathfrak{u}$ can be witnessed by a co-analytic ultrafilter base for a $p$-point; in items $(1)$ and $(3)$ the independence number can be witnessed by a co-analytic selective independent family.
\end{cor}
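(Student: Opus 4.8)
The plan is to work over $L$ and run a countable support iteration $\bbP$ of length $\omega_2$ that extends the template $\bbP^*$ of Section~\ref{SEC_indestructibility}: the iterands responsible for the $\Delta^1_3$-well-order (localisation, club-shooting and Sacks coding) are kept in place, and at a cofinal, bookkept set of extra stages we splice in further $S$-proper iterands chosen according to which constellation $(1)$--$(4)$ is being targeted. Throughout, we preserve the co-analytic tight cofinitary group $\calG$ of Theorem~\ref{THM_Coanalytic_Group}; in addition, using the corresponding constructions in $L$ from earlier work, we fix co-analytic, Sacks-indestructible witnesses to whichever small invariants the target constellation requires: a $P$-point ultrafilter base $\calB$ of cardinality $\aleph_1$ (for items $(1)$ and $(2)$), a selective maximal independent family $\calA$ of cardinality $\aleph_1$ (for items $(1)$ and $(3)$), and, for the ``in addition'' clause, co-analytic tight witnesses to $\mathfrak a$, $\mathfrak a_e$ and $\mathfrak a_p$. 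We claim that $L^{\bbP}$ is as required.

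For the small values of $\mathfrak u$ and of $\mathfrak i$ no extra iterand is inserted on their account; instead the whole iteration is required to strongly preserve $\calB$, respectively $\calA$, so that in $L^{\bbP}$ they remain a $P$-point base, respectively a maximal independent family, of cardinality $\aleph_1$, whence $\mathfrak u=\aleph_1$, respectively $\mathfrak i=\aleph_1$; in item $(1)$ this leaves $\bbP=\bbP^*$, which also keeps $\mathfrak d=\aleph_1$, as it must since $\mathfrak d\le\mathfrak i$. For the large values we splice in, cofinally often, an $S$-proper poset adding a real not reaped by, respectively independent over, the reals of the intermediate model, thereby destroying every ultrafilter base, respectively every maximal independent family, of cardinality $\aleph_1$ that occurs along the iteration; in item $(4)$, where only $\calG$ must be protected, plain Cohen forcing effects both destructions simultaneously, while in items $(2)$ and $(3)$ one uses instead the $\sigma$-centered ``diagonalising'' posets from the literature, which additionally preserve $\calB$, respectively $\calA$. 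In every case $\mathfrak c=\aleph_2$ (the iteration is proper, of length $\omega_2$, over a model of $\CH$) and a $\Delta^1_3$-well-order of the reals is present, the coding subiteration being untouched and the spliced-in iterands being of a form compatible with it, exactly as in \cite{VFSF} and its successors.

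The genuinely new point, supplied by the present paper, is that $\calG$ survives $\bbP$: each of its iterands is $S$-proper and strongly preserves the tightness of $\calG$. For localisation and club-shooting this is immediate, since they add no reals; for Sacks coding it is Theorem~\ref{main_two}; and for the spliced-in iterands (Cohen forcing and the $\sigma$-centered posets above) it holds because tight cofinitary groups, being tight eventually different families of permutations, are indestructible by the posets treated in Section~\ref{SEC_Tight} --- Cohen forcing among them, by the argument of \cite{VFCS} --- and because the relevant indestructibility argument upgrades essentially verbatim to strong preservation in the sense of Definition~\ref{DFN_Strongly_Preserves}, just as Section~\ref{SEC_indestructibility} carries this out for Sacks coding. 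Hence Corollary~\ref{crX} gives $\forces_{\bbP}$``$\calG$ is a tight cofinitary group'', so $\calG$ is a maximal cofinitary group of cardinality $\aleph_1$ in $L^{\bbP}$ and $\mathfrak a_g=\aleph_1$. Moreover $\calG$ stays co-analytic: the $\Pi^1_1$ definition of $\calG$ from the proof of Theorem~\ref{THM_Coanalytic_Group} is absolute for initial segments of $L$, so in $L^{\bbP}$ it still defines exactly $\bigcup_{\xi<\omega_1}\calG_\xi$, since any real satisfying it must lie in some $\calG_\nu$ and is therefore constructible. The same $L$-absoluteness keeps $\calB$, $\calA$ and the $\mathfrak a,\mathfrak a_e,\mathfrak a_p$-witnesses co-analytic, and their maximality in $L^{\bbP}$ follows from the GHT-style preservation theorems invoked above.

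The main obstacle is that each iterand must strongly preserve the \emph{whole package} of objects at once: for a countable $M\prec H(\theta)$ with $M\cap\omega_1\in S$ containing all relevant parameters, and for the selector functions $\chi_{\calG}(M),\chi_{\calA}(M),\chi_{\calB}(M),\dots$, one must build a single $(M,\dot{\bbQ})$-generic condition witnessing the genericity requirement for every object simultaneously. Because the extra clauses concern pairwise ``orthogonal'' objects, the fusion arguments behind the proof of Theorem~\ref{main_two} and behind the cited preservation theorems for $\calA$ and $\calB$ can be interleaved inside a single countable-support fusion, carrying the bookkeeping for all objects along the same thinning of a tree of conditions, just as in the proof of Theorem~\ref{main_two}. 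No new idea beyond Section~\ref{SEC_indestructibility} is needed; the remaining work is entirely bookkeeping.
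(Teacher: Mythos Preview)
Your proposal is correct and follows essentially the same approach as the paper, which simply defers to the iterations of \cite[Theorems 6.1 and 6.2]{JBVFCS}; you have reconstructed in outline what those iterations do and why the tight cofinitary group $\calG$ survives them. One small inaccuracy: the ``diagonalising'' posets you invoke for items $(2)$ and $(3)$ (Miller partition forcing and Shelah's poset for diagonalising maximal ideals, respectively) are not $\sigma$-centered; what matters, and what the paper records in its abstract and Section~\ref{SEC_Tight}, is that they are $S$-proper and strongly preserve tight eventually different families of permutations, hence in particular $\calG$.
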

\begin{proof} Work over $L$ and proceed with a countable support iteration as in~\cite[Theorem 6.1]{JBVFCS} for item $(1)$ and as in~\cite[Theorem 6.2]{JBVFCS} for items $(2)-(4)$. 
\end{proof}

\section{questions}

As discussed in the second section, the interaction between maximal cofinitary groups and maximal eventually different families of permutations is not really clear.
To the knowledge of the authors, the following is not even known:

\begin{question}
    Is there a maximal cofinitary group that is not a maximal eventually different family of permutations?
\end{question}

As noted before, every maximal eventually different family of permutations, which happens to be a group, is in fact a maximal cofinitary group.
We expect the answer to be yes, otherwise we would immediately obtain $\mathfrak{a}_p \leq \mathfrak{a}_g$.
Note, that a tight cofinitary group in the sense of this paper is also a witness for $\mathfrak{a}_p$, so the results of this paper are not able to be used to separate $\mathfrak{a}_g$ and $\mathfrak{a}_p$.
Moreover, no relations or the absence thereof are known between $\mathfrak{a}_e, \mathfrak{a}_p$ and $\mathfrak{a}_g$.

\begin{question}
    Can any of $\mathfrak{a}_e$, $\mathfrak{a}_p$ and $\mathfrak{a}_g$ consistently be separated from another?
\end{question}

Finally, T\"ornquist proved for mad families that the existence of a $\Sigma^1_2$ mad family already implies the existence of a $\Pi^1_1$ mad family of the same size \cite{AT}. Similar results have been obtained for maximal eventually different families, maximal independent families and towers, see respectively \cite{VFDS}, \cite{JBVFYK} and \cite{VFJS}. 
We ask if the same is true for maximal cofinitary groups.
Note, that unlike as for mad families, this question is only really interesting in models of {$\neg$\sf CH}, as there always is a Borel maximal cofinitary group, which has to be of size continuum \cite{HS1}.

\begin{question}
   Does the existence of a $\Sigma^1_2$ maximal cofinitary group imply the existence of a $\Pi^1_1$ maximal cofinitary group of the same size?
\end{question}

\end{document}